\newcommand{\w}{\omega}
\newcommand{\IN}{\mathbb N}
\newcommand{\IZ}{\mathbb Z}
\newcommand{\F}{\mathcal F}
\newcommand{\Ra}{\Rightarrow}
\newtheorem{theorem}{Theorem}
\newtheorem{proposition}{Proposition}
\newtheorem{lemma}{Lemma}
\theoremstyle{definition}
\newtheorem{definition}[theorem]{Definition}
\title[Homeomorphisms of the space of non-zero integers with the Kirch topology]{Homeomorphisms of the space of non-zero integers\\ with the Kirch topology}
\author{Yaryna Stelmakh}
\address{Ivan Franko National University of Lviv, Ukraine}
\email{yarynziya@ukr.net}
\subjclass[2010]{Primary: 54D05, 54H99; Secondary: 11A41, 11N13.}
\begin{document}
\begin{abstract} The {\em Golomb} (resp. {\em Kirch}) topology on the set $\IZ^\bullet$ of nonzero integers is generated by the base consisting of arithmetic progressions $a+b\IZ=\{a+bn:n\in\IZ\}$ where $a\in\IZ^\bullet$ and $b$ is a (square-free) number, coprime with $a$. In 2019 Dario Spirito proved that the space of nonzero integers endowed with the Golomb topology admits only two self-homeomorphisms.  In this paper we prove an analogous fact for the space of nonzero integers endowed with the Kirch topology: it also admits exactly two self-homeomorphisms.
\end{abstract}

\maketitle

In this paper we describe the homeomorphism group of the space $\IZ^\bullet$ of nonzero integers endowed with the {\em Kirch topology} $\tau_K$, which is generated by the subbase consisting of the cosets $a+p\IZ$ where $a\in\IZ^\bullet$ and $p$ is a prime number that does not divide $a$. On the subspace $\IN$ of $\IZ^\bullet$ this topology was introduced by Kirch in \cite{Kirch}.

Banakh, Stelmakh and Turek \cite{BYT} prove that the subspace $\IN$ of $(\IZ^\bullet,\tau_K)$ is topologically rigid in the sense that each self-homeomorphism of $\IN$ endowed with the subspace topology $\tau_K{\restriction}\IN=\{U\cap\IN:U\in\tau\}$ is the identity map of $\IN$.

On the other hand, the space $(\IZ^\bullet,\tau_K)$ does admit a non-trivial self-homeomorpfism, namely the map $$j:\IZ^\bullet\to\IZ^\bullet,\quad j:x\mapsto -x.$$ In this paper we prove that this is the unique non-trivial self-homeomorphism of the topological space $(\IZ^\bullet,\tau_K)$. A similar result for the Golomb topology on $\IZ^\bullet$ was proved by Dario Spirito \cite{Spirito}. The topological rigidity of the Golomb topology on $\IN$ was proved by Banakh, Spirito and Turek in \cite{BST}.

\begin{theorem}\label{t:main} The space $\IZ^\bullet=\IZ\setminus\{0\}$ of nonzero integers endowed with the Kirch topology admits only two self-homeomorphisms.
\end{theorem}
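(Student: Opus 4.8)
Both the identity map and the involution $j\colon x\mapsto-x$ are self-homeomorphisms of $(\IZ^\bullet,\tau_K)$: $j$ carries the subbasic set $a+p\IZ$ onto $-a+p\IZ$, with $p\nmid a$ exactly when $p\nmid{-a}$, and $j^2=\mathrm{id}$. So to prove Theorem~\ref{t:main} it suffices, given an arbitrary self-homeomorphism $h$, to show $h\in\{\mathrm{id},j\}$. The plan is to split this into two parts: (A) $h$ maps $\IN$ onto $\IN$ or onto $-\IN$; and (B), granting (A) and hence --- after replacing $h$ by $j\circ h$ if necessary --- that $h(\IN)=\IN$, concluding that $h=\mathrm{id}$. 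For (B) I would first invoke \cite{BYT}: since $\tau_K{\restriction}\IN$ is precisely Kirch's topology on $\IN$ and $h{\restriction}\IN$ is a self-homeomorphism of it, \cite{BYT} gives $h{\restriction}\IN=\mathrm{id}_\IN$, so all that is left is to fix the negative integers pointwise.

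That last step I would handle by a short density-and-congruence argument. Every nonempty subbasic set $a+p\IZ$ contains infinitely many positive integers, so $\IN$ is dense in $(\IZ^\bullet,\tau_K)$ and each point of $-\IN$ is determined by the trace on $\IN$ of its neighbourhood filter; explicitly, the neighbourhood filter of $-n$ traces on $\IN$ to the filter generated by the sets $\{m\in\IN:m\equiv-n\pmod{p}\}$ over primes $p$ not dividing $n$. Since $h$ fixes $\IN$ pointwise and permutes $-\IN$, writing $h(-n)=-n'$ one obtains that the traces on $\IN$ of the neighbourhood filters of $-n$ and of $-n'$ coincide; then, supposing $n\ne n'$ and choosing a prime $p$ with $p\nmid nn'$ and $n\not\equiv n'\pmod{p}$, separating $-n$ from $-n'$ modulo $p$ via the Chinese Remainder Theorem gives a contradiction. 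Hence $n=n'$, so $h=\mathrm{id}$; symmetrically, $h(\IN)=-\IN$ forces $h=j$.

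The heart of the matter --- and where I expect the main obstacle --- is part (A): producing enough topological invariants to force $h$ to respect the positive/negative dichotomy. Here I would follow the strategy of \cite{BYT} and of Spirito's treatment of the Golomb topology on $\IZ^\bullet$ in \cite{Spirito}. The first task is to compute the relevant closures, the model case being $\overline{a+p\IZ}=(a+p\IZ)\cup(p\IZ\cap\IZ^\bullet)$ (closure in $\IZ^\bullet$), together with the analogous formula for $\overline{a+b\IZ}$ with $b$ square-free; recall also that $(\IZ^\bullet,\tau_K)$ is connected. From these formulas I would extract a purely topological characterization of the family of ``divisibility sets'' $\{\,p\IZ\cap\IZ^\bullet:p\text{ prime}\,\}$ --- for instance as the boundaries of a suitable class of connected open sets --- so that $h$ must permute this family, and in particular must fix the two-point set $\{1,-1\}=\IZ^\bullet\setminus\bigcup_{p}(p\IZ\cap\IZ^\bullet)$.

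The final, and I anticipate most delicate, step of (A) is to upgrade ``$h$ permutes the divisibility sets'' to ``$h$ respects the residue structure finely enough that $h(\IN)\in\{\IN,-\IN\}$''. The guiding idea is that a self-homeomorphism respecting the incidence between points and primes should, via a closer analysis of how neighbourhoods sit relative to the divisibility sets, reconstruct for each $x$ its tuple of residues $(x\bmod p)_{p}$ up to one global choice of sign --- and $\IN$ and $-\IN$ are exactly the two classes on which that choice is constant. Carrying this through for the Kirch topology, where (unlike in the Golomb case) only square-free moduli are available, is where I expect the bulk of the technical work, essentially transporting the arguments of \cite{BST}, \cite{Spirito} and \cite{BYT} to the present setting.
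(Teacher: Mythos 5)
Your reductions are fine as far as they go: $j$ is indeed a homeomorphism, and your part (B) works --- if one already knew $h[\IN]=\IN$, then $h{\restriction}\IN$ would be a self-homeomorphism of $(\IN,\tau_K{\restriction}\IN)$, the rigidity result of \cite{BYT} would give $h{\restriction}\IN=\mathrm{id}_\IN$, and since $\IN$ is dense in the Hausdorff space $(\IZ^\bullet,\tau_K)$, the trace-of-neighbourhood-filter argument does pin down the negative points. The genuine gap is part (A), which you explicitly leave as a sketch: you give no actual argument that $h[\IN]\in\{\IN,-\IN\}$, nor for the intermediate claims feeding into it. The proposed characterization of the sets $p\IZ\cap\IZ^\bullet$ ``as boundaries of a suitable class of connected open sets'' is not substantiated (each $\partial(a+p\IZ)=p\IZ^\bullet$, but recognizing the relevant class of open sets topologically is exactly the circular point), and the claim that $h$ must fix $\{1,-1\}$ is itself nontrivial: in the paper it is obtained only after the whole superconnecting-poset apparatus (the arithmetic description of the filters $\F_E$ in Lemma~\ref{l:Kirch}, the order criterion of Lemma~\ref{l:wo2}, the identification of the maximal families $\mathfrak F'$, $\mathfrak F''$, the invariance of $\F_{\{p,2p\}}$, and the graph $\Gamma_2$). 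Moreover, your final step of (A) --- reconstructing the residues $(x\bmod p)_p$ of every $x$ up to one global sign --- is not a stepping stone but the entire theorem: such a reconstruction immediately gives $h(x)=\pm x$ with a uniform sign, making (B) superfluous. So the proposal defers precisely the content that requires proof.

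For comparison, the paper does not pass through a statement like your (A) and does not use \cite{BYT} as a black box (it cannot, for the reason above); instead it works directly on $\IZ^\bullet$: it shows the Kirch space is superconnected, builds the superconnecting poset of filters $\F_E$, characterizes the inclusion $\F_E\subseteq\F_F$ arithmetically, identifies the first and second layers of maximal elements to deduce $\tilde h(\F_{\{p,2p\}})=\F_{\{p,2p\}}$, fixes the powers of $2$ via the graph $\Gamma_2$, deduces $\Pi_x\cup\{2\}=\Pi_{h(x)}\cup\{2\}$, analyzes the graphs $\Gamma_p$ using the Mih\u ailescu (Catalan) theorem, eliminates $h(p)=p^n$ with $n>1$ via Zsigmondy's theorem, and finally spreads $h(p)=p$ to all points by Dirichlet's theorem and Hausdorffness. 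None of this machinery (or any substitute for it) appears in your proposal, so the argument as written does not establish the theorem.
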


The proof of this theorem follows the lines of the proof of the topological rigidity of the space $(\IN,\tau_K{\restriction}\IN)$ from \cite{BYT}. The proof is divided into 23  lemmas.
A crucial role in the proof belongs to the superconnectedness of the Kirch space and the superconnecting poset of the Kirch space, which is defined in Section~\ref{s:poset}.







\section{Four classical number-theoretic results}


By $\Pi$ we denote the set of prime numbers. For a number $x\in\IZ$ by $\Pi_x$ we denote the set of all prime divisors of $x$. Two numbers $x,y\in\IZ$ are {\em coprime} iff $\Pi_x\cap\Pi_y=\emptyset$.





In the proof of Theorem~\ref{t:main} we shall exploit the  following four known results of Number Theory. The first one is the famous Chinese Remainder Theorem (see. e.g. \cite[3.12]{J}).

\begin{theorem}[Chinese Remainder Theorem]\label{Chinese} If
$b_1,\dots,b_n\in\IZ$ are pairwise coprime numbers, then for any numbers $a_1,\dots,a_n\in\IZ$, the intersection $\bigcap_{i=1}^n(a_i+b_i\IN)$ is infinite.\end{theorem}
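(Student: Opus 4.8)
The plan is to reduce the statement to the standard residue-class form of the Chinese Remainder Theorem and then account for the fact that we intersect the one-sided progressions $a_i+b_i\IN$ rather than the full cosets $a_i+b_i\IZ$. The starting observation is that $a_i+b_i\IN\subseteq a_i+b_i\IZ=\{x\in\IZ:b_i\mid x-a_i\}$, so any element of $\bigcap_{i=1}^n(a_i+b_i\IN)$ is a common solution of the congruences $x\equiv a_i\pmod{b_i}$ for $i=1,\dots,n$. Coprimality here is the condition $\Pi_{b_i}\cap\Pi_{b_j}=\emptyset$ for $i\ne j$, i.e. $\gcd(|b_i|,|b_j|)=1$; since in every application the moduli are primes or square-free numbers, I treat the case of positive $b_i$ and write $B=b_1\cdots b_n$.

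First I would produce a single common solution by the usual explicit formula. For each $i$ set $B_i=B/b_i=\prod_{j\ne i}b_j$. Pairwise coprimality of the $b_j$ gives $\gcd(b_i,B_i)=1$, so $B_i$ is invertible modulo $b_i$; choose $c_i\in\IZ$ with $B_ic_i\equiv1\pmod{b_i}$. Then $x_0:=\sum_{i=1}^n a_iB_ic_i$ is a common solution: the only computation needed is that $b_i\mid B_j$ whenever $j\ne i$, which annihilates every summand except the $i$-th and leaves $x_0\equiv a_iB_ic_i\equiv a_i\pmod{b_i}$.

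Next I would pin down the whole solution set. If $x$ is any common solution, then $b_i\mid x-x_0$ for every $i$; as the $b_i$ are pairwise coprime their least common multiple equals $B$, whence $B\mid x-x_0$. Thus the common solutions of the congruence system form exactly the infinite arithmetic progression $x_0+B\IZ$.

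Finally I would pass from cosets back to the one-sided progressions. For $x=x_0+kB$ the integer $m_i:=(x-a_i)/b_i$ is well defined, and since $b_i>0$ we have $m_i\to+\infty$ as $k\to+\infty$; hence for all sufficiently large $k$ each $m_i$ is a natural number, which means $x=a_i+b_im_i\in a_i+b_i\IN$ for every $i$. Since there are infinitely many such $k$, the intersection $\bigcap_{i=1}^n(a_i+b_i\IN)$ is infinite. The one point that goes beyond the textbook statement---which only asserts a single residue class of solutions in $\IZ$---is exactly this last step: one must guarantee that infinitely many solutions land on the correct side of every progression, and this is where positivity of the moduli enters.
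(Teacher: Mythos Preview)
The paper does not supply a proof of this theorem; it merely records it as a classical fact with a reference to a textbook. Your argument is the standard construction (partial products $B_i$, modular inverses, uniqueness modulo $B$) followed by the easy observation that shifting by large multiples of $B$ pushes every quotient $(x-a_i)/b_i$ into $\IN$, and it is correct.

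One small remark: the statement as printed allows $b_i\in\IZ$, and with moduli of mixed sign the one-sided progressions $a_i+b_i\IN$ can point in opposite directions, making the intersection finite; so the literal statement needs the tacit hypothesis $b_i>0$ that you adopt. This is harmless here, since every invocation of the theorem in the paper uses prime (hence positive) moduli.
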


The second classical result is not elementary and is due to Dirichlet \cite[S.VI]{Dirichlet}, see also \cite[Ch.7]{Ap}.

\begin{theorem}[Dirichlet]\label{Dirichlet} For any coprime numbers $a,b\in\IN$ the  arithmetic progression $a+b\IN$  contains a prime number.
\end{theorem}

The third classical result is a recent theorem of Miha\u\i lescu \cite{Mih} who solved old Catalan's Conjecture \cite{Met}.

\begin{theorem}[Mih\u ailescu]\label{Mihailescu} If $a,b\in \{m^{n+1}:n,m\in \IN\}$, then $|a-b|=1$ if and only if $\{a,b\}=\{2^3,3^2\}$.
\end{theorem}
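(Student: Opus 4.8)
The statement is Catalan's conjecture, so the direction $\{a,b\}=\{2^3,3^2\}\Rightarrow|a-b|=1$ is the immediate check $|8-9|=1$ with $8=2^{2+1}$ and $9=3^{1+1}$, and the whole content lies in the converse. The plan is to recast the converse as the assertion that the only solution of $x^p-y^q=1$ in integers $x,y\ge 2$ and exponents $p,q\ge 2$ is $(x,y,p,q)=(3,2,2,3)$ (the cases in which a base or a power equals $1$ yield no consecutive perfect powers and are trivial to exclude). First I would reduce the exponents to odd primes: since any perfect power with composite exponent is also a perfect power with a prime exponent dividing it, it suffices to take $p,q$ prime, and the cases in which one exponent equals $2$ are dispatched by the classical elementary arguments of V.\,A.~Lebesgue (for $x^p-y^2=1$) and Chao Ko (for $x^2-y^q=1$). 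This leaves the core case in which both $p$ and $q$ are odd primes.

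Second I would establish the divisibility relations of Cassels: for a nontrivial solution with $p,q$ odd primes one has $q\mid x$ and $p\mid y$, together with the factorisation of $x-1$ and of $(x^p-1)/(x-1)$ into a prescribed $p$-power times a $q$-th power. This algebraic normalisation lets me pass to the cyclotomic field $\IQ(\zeta)$ with $\zeta=\zeta_p$ a primitive $p$-th root of unity, where the key object is $\alpha=(x-\zeta)/(1-\zeta)\in\IZ[\zeta]$. The crucial structural fact is that the principal ideal $(\alpha)$ is the $q$-th power of an ideal of $\IZ[\zeta]$; this is what brings the full arithmetic of the ideal class group into play.

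Third, the main engine is Stickelberger's theorem: the Stickelberger ideal annihilates the class group of $\IQ(\zeta)$, so applying a suitable Stickelberger element $\theta$ to the $q$-th power ideal $(\alpha)$ forces $\alpha^{\theta}$ to be a $q$-th power up to a unit. A careful analysis of these relations in $\IZ[\zeta]$, exploiting that $\alpha$ is \emph{primary} (congruent to a rational integer modulo a high power of $1-\zeta$), yields the double Wieferich congruences $p^{q-1}\equiv1\pmod{q^2}$ and $q^{p-1}\equiv1\pmod{p^2}$. These already eliminate the overwhelming majority of exponent pairs.

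The hard part will be the final \emph{plus-part} argument, which is Mih\u ailescu's genuinely new contribution. Here one works in the maximal real subfield $\IQ(\zeta)^+$ and studies the module of cyclotomic units as a module over the semisimple group ring $\mathbb{F}_q[\mathrm{Gal}(\IQ(\zeta)/\IQ)]$. Combining Thaine's theorem on annihilators of the real class group with the primary-unit structure attached to $\alpha$, one derives a divisibility and inequality relation between $p$ and $q$ bounding each polynomially in the other, which is flatly incompatible with the Wieferich congruences above for all but finitely many explicitly bounded pairs of small exponents; those residual pairs are then cleared by direct computation (historically, the gap to the computed range was bridged by Tijdeman's effective bound). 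The upshot is that no solution with both exponents $\ge 2$ exists apart from $3^2-2^3=1$, which is exactly the claim $\{a,b\}=\{2^3,3^2\}$. I expect the cyclotomic class-group and unit-module bookkeeping of this last step to be by far the most delicate ingredient.
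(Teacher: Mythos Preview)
The paper does not prove this theorem at all: it is listed in Section~1 among ``four classical number-theoretic results'' that are merely quoted (with a citation to Mih\u ailescu's original paper \cite{Mih}) and then used as black boxes in the later combinatorial analysis of the graphs $\Gamma_p$. So there is no ``paper's own proof'' to compare against; your outline is aimed at a target the paper deliberately sidesteps.

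That said, your sketch is a faithful high-level map of Mih\u ailescu's actual argument: reduction to prime exponents, the Lebesgue and Chao Ko eliminations of the even-exponent cases, Cassels' divisibility relations, the passage to $\mathbb Q(\zeta_p)$ with the Stickelberger annihilator yielding the double Wieferich congruences, and the plus-part argument via cyclotomic units and Thaine's theorem. One small correction: Mih\u ailescu's final version is entirely self-contained and does \emph{not} appeal to Tijdeman's effective bound or to any residual computer verification; the algebraic inequalities obtained from the unit-module analysis already contradict the Wieferich congruences for every pair of odd primes, so the parenthetical about ``bridging the gap to the computed range'' should be dropped. For the purposes of this paper, though, the statement is simply imported as a classical theorem.
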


The fourth classical result we use is due to Karl Zsigmondy \cite{Zsigmondy}, see also \cite[Theorem 3]{Roitman}.

\begin{theorem}[Zsigmondy]\label{Zsigmondy} For integer numbers $a,n\in\IN\setminus\{1\}$ the inclusion $\Pi_{a^n-1}\subseteq\bigcup\limits_{0<k<n}\Pi_{a^k-1}$ holds if and only if one of the following conditions is satisfied:
\begin{enumerate}
\item $n=2$ and $a=2^k-1$ for some $k\in\IN$; then $a^2-1=(a+1)(a-1)=2^k(a-1)$;
\item $n=6$ and $a=2$; then $a^n-1=2^6-1=63=3^2\times 7=(a^2-1)^2\times(a^3-1)$.
\end{enumerate}
\end{theorem}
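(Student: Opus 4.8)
The statement is the classical theorem of Zsigmondy, so I will sketch its standard proof via cyclotomic polynomials rather than merely cite it. Call a prime $p$ a \emph{primitive prime divisor} of $a^n-1$ if $p\in\Pi_{a^n-1}$ but $p\notin\Pi_{a^k-1}$ for every $0<k<n$; equivalently, $p\nmid a$ and the multiplicative order $\mathrm{ord}_p(a)$ equals $n$. The displayed inclusion $\Pi_{a^n-1}\subseteq\bigcup_{0<k<n}\Pi_{a^k-1}$ asserts exactly that $a^n-1$ has \emph{no} primitive prime divisor. The plan is therefore to locate such a prime for all $(a,n)$ outside the two listed cases, and to check that those two cases genuinely admit none. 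The ``if'' direction is the easy verification already indicated by the displayed factorizations: if $a=2^k-1$ then $\Pi_{a^2-1}=\Pi_{2^k(a-1)}=\Pi_{a-1}$ (since $a-1$ is even), and if $a=2,n=6$ then $\Pi_{63}=\{3,7\}\subseteq\Pi_{2^2-1}\cup\Pi_{2^3-1}$; so the real content is the ``only if'' direction.

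First I would introduce the cyclotomic polynomials $\Phi_d$ and the factorization $a^n-1=\prod_{d\mid n}\Phi_d(a)$, which concentrates the primitive primes in the single factor $\Phi_n(a)$. The decisive structural lemma is: every prime $p\mid\Phi_n(a)$ is either a primitive prime divisor (order exactly $n$) or else equals $\max\Pi_n$, the largest prime dividing $n$. Indeed, if $d:=\mathrm{ord}_p(a)<n$ one shows $n=d\,p^{s}$ with $s\ge 1$, so $p\mid n$; and since $d\mid p-1$ we get $d<p$, forcing every prime factor of $n$ other than $p$ to lie in $d$ and hence to be $<p$, so $p=\max\Pi_n$. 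A lifting-the-exponent computation then controls the exponent of $p$ in $\Phi_n(a)$, giving $v_p(\Phi_n(a))=1$ \emph{except} when $n=2$. This single exception is precisely what opens case (1): as $\Phi_2(a)=a+1$ and $\max\Pi_2=2$, the absence of a primitive prime divisor forces $a+1$ to be a power of $2$, i.e.\ $a=2^k-1$.

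With the lemma in hand, for $n\neq 2$ the quantity $\Phi_n(a)$ can be divisible only by $P:=\max\Pi_n$, to the first power, so absence of a primitive prime divisor forces the clean equation $\Phi_n(a)=P$ (using $\Phi_n(a)\ge 2$ for $a\ge 2$). I would contradict this by a size estimate. Writing $\Phi_n(a)=\prod_{\zeta}(a-\zeta)$ over the $\varphi(n)$ primitive $n$-th roots of unity and using $|a-\zeta|>a-1$ gives $\Phi_n(a)>(a-1)^{\varphi(n)}$. Since $P\le n$, it suffices to prove $(a-1)^{\varphi(n)}>n$. For $a\ge 3$ this reads $2^{\varphi(n)}>n$, which holds once $\varphi(n)$ is not too small; combined with the standard lower bounds on $\varphi(n)$ this leaves only a short finite list of small $n$ to inspect directly, none of which survives.

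The genuine fight is the case $a=2$, and it is exactly where the second exception surfaces, since then $(a-1)^{\varphi(n)}=1$ renders the crude bound useless. Here I would sharpen the lower bound on $\Phi_n(2)$ using the conjugate-pair computation $|2-\zeta|^2=5-4\cos\theta$ (most primitive roots lie far from $1$, pushing $\Phi_n(2)$ close to $2^{\varphi(n)}$), to conclude $\Phi_n(2)>n\ge P$ for all but finitely many $n$. A direct evaluation of the remaining small pairs $(2,n)$ then isolates the lone survivor $n=6$, where $\Phi_6(2)=2^2-2+1=3=\max\Pi_6$ indeed equals the largest prime divisor and no primitive prime exists. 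The main obstacle throughout is the structural lemma, and in particular its multiplicity half: establishing $v_P(\Phi_n(a))=1$ for $n>2$ while correctly exposing the failure at $n=2$ is simultaneously what drives the whole argument and what dictates the precise shape of the two exceptional families.
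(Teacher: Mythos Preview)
Your sketch is correct and follows the standard cyclotomic-polynomial route to Zsigmondy's theorem: isolate the primitive primes inside $\Phi_n(a)$, show that any non-primitive prime factor of $\Phi_n(a)$ must be $P=\max\Pi_n$ and (for $n\neq 2$) occurs to the first power, and then defeat the equation $\Phi_n(a)=P$ by the size bound $\Phi_n(a)>(a-1)^{\varphi(n)}$, with the case $a=2$ handled separately and producing the isolated exception $n=6$. One small sharpening you could make: instead of bounding $P\le n$ and then needing $2^{\varphi(n)}>n$ (which fails for a few small $n$ and forces a case check), use directly that $P\mid n$ implies $\varphi(n)\ge P-1$, so for $a\ge 3$ one gets $\Phi_n(a)>2^{\varphi(n)}\ge 2^{P-1}\ge P$ with no residual finite list.

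There is, however, nothing in the paper to compare your argument against. Theorem~\ref{Zsigmondy} is one of the four classical number-theoretic inputs (together with the Chinese Remainder Theorem, Dirichlet's theorem, and Mih\u{a}ilescu's theorem) that the paper merely states and cites from the literature; no proof is given or attempted. The paper invokes it exactly once, in Lemma~\ref{l:F1x}, to exclude $h(p)=p^n$ with $n>1$ for a non-Fermat--Mersenne prime $p$. Your write-up therefore supplies content that the paper deliberately outsources.
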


\section{Superconnected spaces and their superconnecting posets}\label{s:poset}

In this section we discuss superconnected topological spaces and some order structures related to such spaces.

First let us introduce some notation and recall some notions.

 For a set $A$ and $n\in\w$ let $[A]^n=\{E\subseteq A:|A|=n\}$ be the family of $n$-element subsets of $A$, and $[A]^{<\w}=\bigcup_{n\in\w}[A]^n$ be the family of all finite subsets of $A$. For a function $f:X\to Y$ and a subset $A\subseteq X$ by $f[A]$ we denote the image $\{f(a):a\in A\}$ of the set $A$ under the function $f$.

  For a subset $A$ of a topological space $(X,\tau)$ by $\overline{A}$ we denote the closure of $A$ in $X$. For a point $x\in X$ we denote by $\tau_x:=\{U\in\tau:x\in U\}$ the family of all open neighborhoods of $x$ in $(X,\tau)$.
A {\em poset} is an abbreviation for a partially ordered set.

A family $\F$ of subsets of a set $X$ is called a {\em filter} if
\begin{itemize}
\item $\emptyset\notin\F$;
\item for any $A,B\in\F$ their intersection $A\cap B\in\F$;
\item for any sets $F\subseteq E\subseteq X$ the inclusion $F\in\F$ implies $E\in\F$.
\end{itemize}

A topological space $(X,\tau)$ is called {\em superconnected} if for any $n\in\IN$ and non-empty open sets $U_1,\dots,U_n$ the intersection $\overline{U_1}\cap\dots\cap\overline{U_n}$ is non-empty. This allows us to define the filter
$$\F_\infty=\{B\subseteq X\colon \exists U_1,\dots,U_n\in\tau\setminus\{\emptyset\}\;\;(\overline{U_1}\cap\dots\cap\overline{U_n}\subseteq B)\},$$
called the {\em superconnecting filter} of $X$.

For every finite subset $E$ of $X$ consider the subfilter
$$\F_E:=\{B\subseteq X:\textstyle{\exists (U_x)_{x\in E}\in\prod_{x\in E}\tau_x\;\;(\;\bigcap_{x\in E}\overline{U_x}\subseteq B)}\}$$
of $\F_\infty$. Here we assume that $\F_\emptyset=\{X\}$.
It is clear that for any finite sets $E\subseteq F$ in $X$ we have $\F_E\subseteq \F_F$.

The family $$\mathfrak F=\{\F_E:E\in[X]^{<\w}\}\cup\{\F_\infty\}$$ is endowed with the inclusion partial order and is called the {\em superconnecting poset} of the superconnected space $X$. The filters $\F_\emptyset$ and $\F_\infty$ are the smallest and largest elements of the poset $\mathfrak F$, respectively.

 The following obvious lemma shows that the superconnecting poset $\mathfrak F$ is a topological invariant of the superconnected space.

\begin{proposition}\label{p:iso} For any homeomorphism $h$ of any superconnected topological space $X$, the map $$\tilde h:\mathfrak F\to\mathfrak F,\quad \tilde h\colon\F\mapsto \{h[A]\colon A\in\F\},$$ is an order isomorphism of the superconnecting poset $\mathfrak F$.
\end{proposition}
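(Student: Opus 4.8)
The plan is to show that $\tilde h$ is a well-defined bijection of $\mathfrak F$ onto itself which preserves and reflects the inclusion order; since the order on $\mathfrak F$ is inclusion, this is precisely the assertion of the proposition. Throughout I would use three standard properties of a homeomorphism $h:X\to X$: it is a bijection, so it commutes with arbitrary intersections; it is continuous and open, so for every $x\in X$ the assignment $U\mapsto h[U]$ restricts to a bijection of $\tau_x$ onto $\tau_{h(x)}$; and it commutes with the closure operator, so $h[\overline U]=\overline{h[U]}$ for every $U\subseteq X$. It is convenient to work with the operation $\mathcal A\mapsto h_\ast\mathcal A:=\{h[A]:A\in\mathcal A\}$ defined on arbitrary families $\mathcal A$ of subsets of $X$: it is monotone with respect to inclusion, $(h^{-1})_\ast$ is a two-sided inverse of $h_\ast$ (because $h^{-1}[h[A]]=A=h[h^{-1}[A]]$), and by definition $\tilde h=h_\ast{\restriction}\mathfrak F$.

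First I would check that $h_\ast$ sends each building block of $\mathfrak F$ to a building block of $\mathfrak F$, namely $h_\ast(\F_\infty)=\F_\infty$ and $h_\ast(\F_E)=\F_{h[E]}$ for every $E\in[X]^{<\w}$. For the latter, suppose $B\in\F_E$, witnessed by a family $(U_x)_{x\in E}$ with $U_x\in\tau_x$ and $\bigcap_{x\in E}\overline{U_x}\subseteq B$. Applying $h$ and using the three properties above,
\[
\bigcap_{y\in h[E]}\overline{h[U_{h^{-1}(y)}]}=\bigcap_{x\in E}\overline{h[U_x]}=\bigcap_{x\in E}h[\overline{U_x}]=h\Big[\bigcap_{x\in E}\overline{U_x}\Big]\subseteq h[B],
\]
and since $h[U_{h^{-1}(y)}]\in\tau_y$ for each $y\in h[E]$, this shows $h[B]\in\F_{h[E]}$; hence $h_\ast(\F_E)\subseteq\F_{h[E]}$. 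Applying this inclusion to the homeomorphism $h^{-1}$ and the finite set $h[E]$ gives $(h^{-1})_\ast(\F_{h[E]})\subseteq\F_{h^{-1}[h[E]]}=\F_E$, and pushing this forward by the monotone map $h_\ast$ (and using $h_\ast\circ(h^{-1})_\ast=\mathrm{id}$) yields the reverse inclusion $\F_{h[E]}\subseteq h_\ast(\F_E)$. The computation for $\F_\infty$ is identical, using non-empty open sets $U_1,\dots,U_n$ in place of a neighbourhood family and the fact that each $h[U_i]$ is again non-empty and open.

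Then, since $E\mapsto h[E]$ is a bijection of $[X]^{<\w}$ (with inverse $F\mapsto h^{-1}[F]$), it follows from the previous paragraph that $h_\ast$ permutes the set $\{\F_E:E\in[X]^{<\w}\}$ and fixes $\F_\infty$, hence maps $\mathfrak F$ bijectively onto $\mathfrak F$; in particular $\tilde h=h_\ast{\restriction}\mathfrak F$ is a well-defined bijection of $\mathfrak F$ with inverse $(h^{-1})_\ast{\restriction}\mathfrak F$. Both $h_\ast$ and $(h^{-1})_\ast$ are monotone, so $\tilde h$ is an order isomorphism of the poset $\mathfrak F$. There is no real obstacle in this argument — it is the ``obvious lemma'' flagged before the statement — and the only place demanding mild care is the bookkeeping ensuring that $\tilde h$ is genuinely well-defined (lands in $\mathfrak F$) and onto, which is why I would isolate the two identities $h_\ast(\F_\infty)=\F_\infty$ and $h_\ast(\F_E)=\F_{h[E]}$ rather than try to verify order-preservation on $\mathfrak F$ directly.
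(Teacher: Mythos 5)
Your argument is correct, and it is exactly the routine verification that the paper omits: Proposition~\ref{p:iso} is stated there as an ``obvious lemma'' with no proof, and the intended justification is precisely your computation $h_\ast(\F_E)=\F_{h[E]}$, $h_\ast(\F_\infty)=\F_\infty$ via $h[\overline{U}]=\overline{h[U]}$ and injectivity, followed by the observation that $(h^{-1})_\ast$ is a monotone inverse. Nothing to correct.
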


In the following sections we shall study the order properties of the poset $\mathfrak F$ for the Kirch space $(\IZ^\bullet,\tau_K)$ and shall exploit the obtained information in the proof of the topological rigidity of the Kirch space.

\section{Proof of Theorem~\ref{t:main}}

We divide the proof of Theorem~\ref{t:main} into 23 lemmas.

\begin{lemma}\label{basic} For any $a,b\in\IZ^\bullet$ the closure $\overline{a+b\IZ}$ of the arithmetic progression $a+b\IZ$ in the Kirch space $(\IZ^\bullet,\tau_K)$ is equal to
$$\IZ^\bullet\cap\bigcap_{p\in\Pi_b}\big(\{0,a\}+p\IZ\big).$$
\end{lemma}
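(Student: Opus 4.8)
The plan is to compute the closure directly from the subbase for $\tau_K$. Recall that a point $x\in\IZ^\bullet$ fails to be in $\overline{a+b\IZ}$ precisely when some basic open neighborhood of $x$ misses $a+b\IZ$. Since the Kirch topology is generated by the subbase $\{c+p\IZ: c\in\IZ^\bullet,\ p\in\Pi,\ p\nmid c\}$, a neighborhood base at $x$ is given by finite intersections $\bigcap_{p\in S}(x+p\IZ)$ with $S$ a finite set of primes not dividing $x$; by the Chinese Remainder Theorem such an intersection equals $x+(\prod_{p\in S}p)\IZ$ and is nonempty. So I first reduce to understanding, for a single prime $p\nmid x$, when $(x+p\IZ)\cap(a+b\IZ)=\emptyset$.

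Second, I would analyze that intersection. Write $b=\prod_{q}q^{k_q}$. The congruence system $y\equiv x\pmod p$, $y\equiv a\pmod b$ is solvable (for some $y\in\IZ$, hence for infinitely many, hence for some nonzero $y$) iff it is solvable modulo each prime power dividing $\mathrm{lcm}(p,b)$; the only possible obstruction is at $p$ when $p\mid b$, where we need $x\equiv a\pmod p$. Thus $(x+p\IZ)\cap(a+b\IZ)=\emptyset$ iff $p\in\Pi_b$ and $x\not\equiv a\pmod p$. Consequently, $x\notin\overline{a+b\IZ}$ iff there is a prime $p\in\Pi_b$ with $p\nmid x$ and $x\not\equiv a\pmod p$ — equivalently, iff for some $p\in\Pi_b$ we have $x\not\equiv 0\pmod p$ and $x\not\equiv a\pmod p$, i.e. $x\notin\{0,a\}+p\IZ$. (Note $p\nmid x$ is automatic here since $x\equiv 0\pmod p$ would put $x$ in $\{0,a\}+p\IZ$ already.) Taking complements, $x\in\overline{a+b\IZ}$ iff $x\in\IZ^\bullet$ and $x\in\{0,a\}+p\IZ$ for every $p\in\Pi_b$, which is exactly the claimed formula.

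The one technical point to handle carefully is the passage from "solvable in $\IZ$" to "solvable in $\IZ^\bullet$": a basic neighborhood $\bigcap_{p\in S}(x+p\IZ)$ is an infinite arithmetic progression, and I must make sure that when it meets $a+b\IZ$ it meets it in a nonzero integer. This is immediate because the intersection, when nonempty, is itself an infinite progression $y_0+m\IZ$ (with $m=\mathrm{lcm}$ of the moduli), so it contains nonzero elements; Theorem~\ref{Chinese} is the relevant tool. I also need the trivial observation that it suffices to test neighborhoods coming from primes not dividing $x$, since those generate the topology at $x$.

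The main obstacle — really the only substantive point — is the case split at primes $p$ dividing $b$: one must see that the congruence $y\equiv x\pmod{p^{k}}$ together with $y\equiv a\pmod{p^{k}}$ is what governs solvability, but in fact only the residues mod $p$ matter for whether a \emph{neighborhood basic open set} of $x$ (which only ever uses modulus $p$, not $p^k$) meets $a+b\IZ$; the higher powers $p^k$ in $b$ impose conditions on $a+b\IZ$ but the test set $x+p\IZ$ is coarse, so the condition collapses to $x\equiv a\pmod p$. Keeping straight which modulus appears on which side is where care is needed, but there is no deep content beyond the Chinese Remainder Theorem.
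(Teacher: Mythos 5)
Your proposal is correct and, in substance, it is the same argument as the paper's: the easy inclusion uses the subbasic neighborhoods $x+p\IZ$ exactly as in the paper, and the reverse inclusion comes down to the fact that a basic neighborhood with square-free modulus coprime to $x$ meets $a+b\IZ$ as soon as $x\in\{0,a\}+p\IZ$ for every common prime $p$ — the paper runs this via $\gcd(b,d)\mid x-a$ and B\'ezout, you via prime-by-prime solvability (CRT), which is the same computation. One small point to make explicit in a final write-up: the ``reduction to a single prime'' has to be justified on the full basic neighborhoods $x+m\IZ$ with $m$ a square-free product of primes not dividing $x$ (the sets $x+p\IZ$ form only a subbase at $x$), but your lcm/prime-power solvability criterion applies verbatim with $m$ in place of $p$, since each prime divides $m$ to the first power.
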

\begin{proof}
First we prove that $\overline{a+b\mathbb Z}\subseteq \{0,a\}+p\IZ$ for every $p\in\Pi_b$.
Take any point $x\in\overline{a+b\IZ}$ and assume that $x\notin p\IZ$. Then $x+p\IZ$ is a neighborhood of $x$ and hence the intersection $(x+p\IZ)\cap(a+b\IZ)$ is not empty. Then there exist $u,v\in\IZ$ such that $x+pu=a+bv$. Consequently, $x-a=bv-pu\in p\IZ$ and $x\in a+p\IZ$.

Next, take any point $x\in\IZ^\bullet\cap\bigcap_{p\in\Pi_b}(\{0,a\}+p\IZ)$. Given any  neighborhood $O_x$ of $x$ in $(\IZ^\bullet,\tau_K)$, we should prove that $O_x\cap(a+b\IZ)\ne\emptyset$. By the definition of the Kirch topology there exists a square-free number $d\in\IZ^\bullet$ such that $d,x$ are coprime and $x+d\IZ\subseteq O_x$.

If $\Pi_b\subseteq \Pi_x$, then $b,d$ are coprime and by Chinese Remainder Theorem $\emptyset \ne (x+d\IZ)\cap (a+b\IZ)\subseteq O_x\cap(a+b\IZ)$.
So, we can assume $\Pi_b\setminus\Pi_x\ne\emptyset$.
The choice of $x\in \bigcap_{p\in\Pi_b}(\{0,a\}+p\IZ)$ guarantees that $x\in \bigcap_{p\in\Pi_b\setminus \Pi_x}(a+p\IZ)=a+q\IZ$ where $q=\prod_{p\in \Pi_b\setminus\Pi_x}p$. Since the numbers $x$ and $d$ are coprime and $d$ is square-free, the greatest common divisor of $b$ and $d$ divides the number $q$. Since $x-a\in q\IZ$, the Euclides algorithm yields two numbers $u,v\in\IZ$ such that $x-a=bu-dv$, which implies that $O_x\cap (a+b\IZ)\supseteq (x+d\IZ)\cap(a+b\IZ)\ne\emptyset$.
\end{proof}

Lemma~\ref{basic} implies that the Kirch space $(\IZ^\bullet,\tau_K)$ is superconnected and hence possesses the superconnecting filter
$$\F_\infty=\big\{F\subseteq\IZ^\bullet:\exists U_1,\dots,U_n\in\tau_K\setminus\{\emptyset\}\quad\textstyle\big(\bigcap\limits_{i=1}^n\overline{U_i}\subseteq F\big)\big\}$$and the superconnecting poset
$$\mathfrak F=\{\F_E:E\in[\IZ^\bullet]^{<\w}\}\cup\{\F_\infty\}$$
consisting  of the filters $$\F_E=\big\{F\subseteq\IZ^\bullet:\textstyle{\exists (U_x)_{x\in E}\in\prod_{x\in E}\tau_x\;\;\big(\bigcap_{x\in E}\overline{U_x}\subseteq F}\big)\big\}.$$ Here for a point $x\in\IZ^\bullet$ by $\tau_x:=\{U\subseteq\IZ^\bullet:x\in U\}$ we denote the family of open neighborhoods of $x$ in the Kirch topology $\tau_K$.

For a nonempty finite subset $E\subseteq\IZ^\bullet$, let $\Pi_E=\bigcap_{x\in E}\Pi_x$ be the set of common prime divisors of numbers in the set $E$. Also let
$$A_E=\{p\in\Pi:\exists k\in \IN\;\;(E\subset \{0,k\}+p\IZ)\}.$$
Observe that $\Pi_E\subseteq A_E$ and $A_E\ne\emptyset$ because $2\in A_E$.
If $E$ is a singleton, then $A_E=\Pi$;
if $|E|\ge 2$, then $A_E\subseteq \Pi_x\cup\Pi_y\cup\Pi_{x-y}\subseteq\{2,\dots,\max E\}$ for any distinct numbers $x,y\in E$. This inclusion follows from

	\begin{lemma}\label{2ae} For any two-element set $E=\{x,y\}\subset \IZ^\bullet$ we have $A_E= \Pi_x\cup\Pi_y\cup\Pi_{x-y}$. 
\end{lemma}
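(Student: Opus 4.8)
The plan is to prove the two inclusions $A_E \subseteq \Pi_x \cup \Pi_y \cup \Pi_{x-y}$ and $\Pi_x \cup \Pi_y \cup \Pi_{x-y} \subseteq A_E$ separately, for $E = \{x,y\}$ with $x \neq y$.

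For the inclusion $A_E \subseteq \Pi_x \cup \Pi_y \cup \Pi_{x-y}$, I would take a prime $p \in A_E$, so there is some $k \in \IN$ with $\{x,y\} \subseteq \{0,k\} + p\IZ$. Then each of $x$ and $y$ is congruent mod $p$ either to $0$ or to $k$. There are two cases: either both $x,y$ fall into the same residue class (both $\equiv 0$, or both $\equiv k$), in which case $x - y \in p\IZ$ and hence $p \in \Pi_{x-y}$ (using $x \neq y$, so $x - y \neq 0$); or $x,y$ fall into opposite classes, say $x \equiv 0$ and $y \equiv k \pmod p$ (or vice versa), in which case $p \mid x$ so $p \in \Pi_x$ (resp.\ $p \in \Pi_y$). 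In every case $p \in \Pi_x \cup \Pi_y \cup \Pi_{x-y}$, which is the desired inclusion. I expect this direction to be routine — essentially a pigeonhole argument on two elements in two residue classes.

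For the reverse inclusion $\Pi_x \cup \Pi_y \cup \Pi_{x-y} \subseteq A_E$, I would take a prime $p$ in the left-hand side and exhibit an appropriate $k$. If $p \in \Pi_{x-y}$, then $x \equiv y \pmod p$, so taking $k$ to be any representative with $k \equiv x \pmod p$ and $k \in \IN$ gives $\{x,y\} \subseteq \{0,k\} + p\IZ$ (in fact $x,y \in k + p\IZ$), so $p \in A_E$. If $p \in \Pi_x$, then $x \equiv 0 \pmod p$; choosing $k \equiv y \pmod p$ with $k \in \IN$ gives $x \in 0 + p\IZ$ and $y \in k + p\IZ$, whence $\{x,y\} \subseteq \{0,k\} + p\IZ$ and $p \in A_E$. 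The case $p \in \Pi_y$ is symmetric. One small point of care: the definition of $A_E$ requires $k \in \IN$ (a positive integer), so one must make sure the chosen residue representative can be taken positive, which is always possible since each residue class mod $p$ meets $\IN$.

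The main obstacle, such as it is, is purely bookkeeping: keeping straight which of $x,y$ lands in which of the two classes $\{0 + p\IZ, k + p\IZ\}$ and verifying that the value $k$ produced in the reverse direction is legitimately a positive integer coprimality plays no role here, so there is no genuine number-theoretic difficulty — unlike the analysis of $A_E$ for larger sets $E$, where $A_E$ can fail to be expressible so simply and the classical theorems quoted in Section~1 come into play.
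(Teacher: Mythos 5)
Your proof is correct and follows essentially the same route as the paper: the reverse inclusion by exhibiting a suitable residue $k$ in each of the three cases, and the forward inclusion by observing that if $p$ divides neither $x$ nor $y$ then both must lie in the class $k+p\IZ$, forcing $p\in\Pi_{x-y}$. Your explicit remark that the representative $k$ can always be chosen in $\IN$ is a small point the paper glosses over (it writes $\{0,y\}+p\IZ$ even when $y$ may be negative), but the argument is the same.
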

	
\begin{proof} 
Each number $p\in\Pi_x$ (resp. $p\in\Pi_y$) belongs to $A_E$ because $\{x,y\}\subset\{0,y\}+p\IZ$ (resp. $\{x,y\}\subset\{0,x\}+p\IZ\}$). Each number $p\in\Pi_{x-y}$ belongs to $A_E$ because $\{x,y\}\subset x+p\IZ\subseteq\{0,x\}+p\IZ$. This proves that $\Pi_x\cup\Pi_y\cup\Pi_{x-y}\subseteq A_E$.

Now take any prime number $p\in A_E$ and assume that $p\notin \Pi_x\cup\Pi_y$. It follows from $\{x,y\}=E\subset\{0,\alpha_E(p)\}+p\IZ$ that $\{x,y\}\subseteq \alpha_E(p)+p\IZ$ and hence $x-y\in p\IZ$ and $p\in\Pi_{x-y}$.
\end{proof}


Let $\alpha_E\colon A_E\to\w$ be the unique function satisfying the following conditions:
\begin{itemize}
\item[\textup{(i)}] $\alpha_E(p)<p$ for all $p\in A_E$;
\item[\textup{(ii)}] $E\subseteq\{0,\alpha_E(p)\}+p\IZ$ for all $p\in A_E$;
\item[\textup{(iii)}] $\alpha_E(2)=1$ and $\alpha_E(p)=0$ for all $p\in \Pi_E\setminus\{2\}$.
\end{itemize}



\begin{lemma}\label{l:realization} Let $A\subset\Pi$ be a finite set containing $2$ and $\alpha:A\to\IN_0$ be a function such that $\alpha(2)=1$ and $\alpha(p)\in\{0,\dots,p-1\}$ for all $p\in A\setminus\{2\}$. Let  $x$ be the product of odd prime numbers in the set $A$ and $y$ be any number in the set $\IZ^\bullet\cap\bigcap_{p\in A}(\alpha(p)+p\IZ)$. Then the set $E=\{y,x,2x\}$ has $A_E=A$ and $\alpha_E=\alpha$.
\end{lemma}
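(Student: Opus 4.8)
The plan is to compute, for each prime $p$, the residues modulo $p$ of the three members of $E=\{y,x,2x\}$, and then to read off both $A_E$ and $\alpha_E$ from these residues, invoking at the end the uniqueness of the function $\alpha_E$. (First one should check that $y$ can be chosen at all: the primes of $A$ are pairwise coprime, so by the Chinese Remainder Theorem (Theorem~\ref{Chinese}) the set $\bigcap_{p\in A}(\alpha(p)+p\IN)$ is infinite, and it is contained in $\IZ^\bullet\cap\bigcap_{p\in A}(\alpha(p)+p\IZ)$, since $\alpha(2)=1$ excludes $0$.) Since $x$ is the product of the odd primes of $A$, we have $\Pi_x=A\setminus\{2\}$; in particular $x$ is odd, so modulo $2$ the triple $(y,x,2x)$ has residues $(1,1,0)$; for an odd prime $p\in A$ we have $p\mid x$, so the residues are $(\alpha(p),0,0)$; and for a prime $p\notin A$ we have $p\ne 2$ and $p\nmid x$, so neither $x$ nor $2x$ is divisible by $p$.

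From this the equality $A_E=A$ follows quickly. If $p\in A$, the residue computation shows $E\subseteq\{0,\alpha(p)\}+p\IZ$ (with $\alpha(2)=1$), so $p\in A_E$; hence $A\subseteq A_E$. Conversely, suppose $p\in A_E\setminus A$ and fix $k\in\IN$ with $E\subseteq\{0,k\}+p\IZ$. Since $p\ne 2$ and $p\nmid x$, neither $x$ nor $2x$ lies in $p\IZ$, so both are congruent to $k$ modulo $p$; hence $x\equiv 2x\pmod p$, giving $p\mid x$, a contradiction. Thus $A_E=A$.

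It remains to prove $\alpha_E=\alpha$, and here I would simply check that $\alpha$, viewed as a function on $A_E=A$, satisfies the three conditions (i)--(iii) defining $\alpha_E$, and then appeal to the uniqueness of such a function. Condition (i) holds by hypothesis together with $\alpha(2)=1<2$; condition (ii) was verified in the previous paragraph; for (iii), $\alpha(2)=1$ is a hypothesis, and if $p\in\Pi_E\setminus\{2\}$ then $p\mid y$, so $\alpha(p)\equiv y\equiv0\pmod p$ and $0\le\alpha(p)<p$ force $\alpha(p)=0$. Hence $\alpha_E=\alpha$. I expect no real obstacle in this argument; the one point that is not completely mechanical is the reliance on the pair $x,2x$ in proving $A_E\subseteq A$ --- the fact that $E$ contains two elements whose difference $x$ is divisible by precisely the odd primes of $A$ is exactly what forces $A_E$ to contain no prime outside $A$.
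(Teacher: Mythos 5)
Your argument is correct and follows essentially the same route as the paper: the equality $A_E=A$ is obtained by the identical observation that for $p\in A_E\setminus A$ both $x$ and $2x$ would be congruent to $k$ modulo $p$, forcing $p\mid x=2x-x$. Your treatment of $\alpha_E=\alpha$ (verifying conditions (i)--(iii) and invoking the uniqueness in the definition of $\alpha_E$) is just a repackaging of the paper's case analysis $p=2$, $\alpha(p)=0$, $\alpha(p)\neq0$, so there is nothing substantive to add.
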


\begin{proof} For every prime number $p\in A$ we have $\{x,y\}\subset\{0,y\}+p\IZ$, which implies that $p\in A_E$. Assuming that $A_E\setminus A$ contains some prime number $p$, we conclude that $x\notin p\IZ$ and hence the inclusion $\{y,x,2x\}=E\subset \{0,\alpha_E(p)\}+p\IZ$ implies $\{x,2x\}\subset\alpha_E(p)+p\IZ$ and $x=2x-x\in p\IZ$. This contradiction shows that $A_E=A$. To show that $\alpha_E=\alpha$, take any prime number $p\in A=A_E$.
If $p=2$, then $\alpha(p)=1=\alpha_E(p)$. So, we assume that $p\ne 2$. If $\alpha(p)=0$, then $y\in \alpha(p)+p\IZ=p\IZ$ and hence $p\in \Pi_E$. In this case $\alpha_E(p)=0=\alpha(p)$. If $\alpha(p)\ne 0$, then the number $y\in\alpha(p)+p\IZ$ is not divisible by $p$ and then the inclusions $\{y,x,2x\}\subseteq\{0,\alpha(p)\}+p\IZ$ and $\{y,x,2x\}=E\subset\{0,\alpha_E(p)\}+p\IZ$ imply that $\alpha(p)=\alpha_E(p)$.
\end{proof}

The following lemma yields an arithmetic description of the filters $\F_E$.





\begin{lemma}\label{l:Kirch} For any finite subset $E\subseteq\IZ^\bullet$ with $|E|\ge 2$ we have
$$\F_E=\big\{B\subseteq\IZ^\bullet\colon \exists L\in[\Pi\setminus A_E]^{<\w}\quad\bigcap_{p\in L}p\IZ^\bullet\cap\bigcap_{p\in A_E\setminus\Pi_E}(\{0,\alpha_E(p)\}+p\IZ)\subseteq B\big\}.$$Here we assume that $\bigcap_{p\in\emptyset}p\IZ^\bullet=\IZ^\bullet$.
\end{lemma}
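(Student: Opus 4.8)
The plan is to prove both inclusions directly, using only Lemma~\ref{basic} together with the fact that every neighbourhood of a point $x$ in the Kirch space contains a basic one of the form $x+d\IZ$ with $d$ a square-free number coprime to $x$. The remark that drives the whole computation is that if $d$ is coprime to $x$ and a prime $p$ divides $d$, then $p$ does not divide $x$. So once the relevant neighbourhoods are written as $U_x=x+d_x\IZ$ (for $x\in E$, with $d_x$ square-free and coprime to $x$), and we set $S:=\bigcup_{x\in E}\Pi_{d_x}$ and $E_p:=\{x\in E:p\mid d_x\}$ for $p\in S$, no element of $E_p$ is divisible by $p$, and Lemma~\ref{basic} turns the intersection of closures into
$$\bigcap_{x\in E}\overline{U_x}=\IZ^\bullet\cap\bigcap_{p\in S}\bigcap_{x\in E_p}\big(\{0,x\}+p\IZ\big).$$
For each $p\in S$ the inner intersection over $x\in E_p$ equals $\{0,c\}+p\IZ$, where $c\ne 0$ is the common residue mod $p$ of the elements of $E_p$, when those residues all coincide (equivalently $p\in A_{E_p}$), and equals $p\IZ$ otherwise. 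Moreover $p\in S$ forces $E_p\ne\emptyset$, hence $p\notin\Pi_E$, so that if also $p\in A_E$ then $p\in A_E\setminus\Pi_E$, $\alpha_E(p)\ne 0$, and (since every $x\in E_p$ is nonzero mod $p$ and $E\subseteq\{0,\alpha_E(p)\}+p\IZ$) the inner intersection is exactly $\{0,\alpha_E(p)\}+p\IZ$.

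For the inclusion ``$\supseteq$'', given $L\in[\Pi\setminus A_E]^{<\w}$ I would let $d_x$ be the product of all primes in $(A_E\setminus\Pi_E)\cup L$ that do not divide $x$; this is square-free and coprime to $x$, so $U_x:=x+d_x\IZ$ is a neighbourhood of $x$. One then checks that $S=(A_E\setminus\Pi_E)\cup L$ (each such prime fails to divide some element of $E$: for $p\in A_E\setminus\Pi_E$ this is immediate, and for $p\in L$ it follows from $p\notin A_E$, which in fact yields two elements of $E$ with distinct nonzero residues mod $p$), that $E_p=\{x\in E:p\nmid x\}$ for every $p\in S$, and therefore, from the displayed formula together with $E\subseteq\{0,\alpha_E(p)\}+p\IZ$ for $p\in A_E$, that $\bigcap_{x\in E}\overline{U_x}=\bigcap_{p\in L}p\IZ^\bullet\cap\bigcap_{p\in A_E\setminus\Pi_E}(\{0,\alpha_E(p)\}+p\IZ)$. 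Since this set lies in $\F_E$ and $\F_E$ is upward closed, every subset $B$ of $\IZ^\bullet$ containing it lies in $\F_E$, which is exactly the inclusion ``$\supseteq$''.

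For the inclusion ``$\subseteq$'', take $B\in\F_E$ and neighbourhoods $U_x=x+d_x\IZ$ ($d_x$ square-free, coprime to $x$) with $\bigcap_{x\in E}\overline{U_x}\subseteq B$, and put $L:=S\setminus A_E$, a finite subset of $\Pi\setminus A_E$. It suffices to show $\bigcap_{p\in L}p\IZ^\bullet\cap\bigcap_{p\in A_E\setminus\Pi_E}(\{0,\alpha_E(p)\}+p\IZ)\subseteq\bigcap_{x\in E}\overline{U_x}$, and by the displayed formula (the inclusion in $\IZ^\bullet$ being immediate) this reduces to checking, for each $p\in S$, that the left-hand set is contained in the factor $\bigcap_{x\in E_p}(\{0,x\}+p\IZ)$. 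If $p\in S\cap A_E$, then (as noted in the first paragraph) $p\in A_E\setminus\Pi_E$ and that factor equals precisely $\{0,\alpha_E(p)\}+p\IZ$, one of the terms appearing on the left; if $p\in S\setminus A_E=L$, the factor contains $p\IZ\supseteq p\IZ^\bullet$, again a term on the left. This gives the inclusion ``$\subseteq$''.

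I expect the only genuine obstacle to be the bookkeeping in the two ``which primes actually occur'' claims — that $S=(A_E\setminus\Pi_E)\cup L$ in the first part and that $S\cap A_E\subseteq A_E\setminus\Pi_E$ in the second — and in keeping track that the coprimality of $d_x$ with $x$ is exactly what forces the inner intersection to be $\{0,\alpha_E(p)\}+p\IZ$ rather than $p\IZ$ when $p\in A_E$; the underlying residue case analysis is elementary.
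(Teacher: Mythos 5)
Your proof is correct, and it is essentially the paper's argument: both directions shrink to basic neighbourhoods $x+d_x\IZ$, apply Lemma~\ref{basic}, reorganize the intersection of closures prime by prime, choose $L=\bigl(\bigcup_{x\in E}\Pi_{d_x}\bigr)\setminus A_E$ in one direction and the neighbourhoods $U_x=\bigcap_{p\in(L\cup A_E)\setminus\Pi_x}(x+p\IZ)$ (which coincide with your $x+d_x\IZ$) in the other, with the same case split $p\in A_E\setminus\Pi_E$ versus $p\notin A_E$. The only difference is presentational: you first record the ``master formula'' for $\bigcap_{x\in E}\overline{U_x}$ and read both inclusions off it, while the paper argues pointwise.
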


\begin{proof} It suffices to verify  two properties:
\begin{enumerate}
\item for any $(U_x)_{x\in E}\in\prod_{x\in E}\tau_x$ there exists a finite set $L\subseteq \Pi\setminus A_E$ such that
$$\bigcap_{p\in L}p\IZ^\bullet\cap\bigcap_{p\in A_E\setminus\Pi_E}(\{0,\alpha_E(p)\}+p\IZ)\subseteq \bigcap_{x\in E}\overline{U_x};$$
\item for any finite set $L\subseteq\Pi\setminus A_E$ there exists a sequence of neighborhoods $(U_x)_{x\in E}\in\prod_{x\in E}\tau_x$ such that
$$\bigcap_{x\in E}\overline{U_x}\subseteq \bigcap_{p\in L}p\IZ^\bullet\cap\bigcap_{p\in A_E\setminus\Pi_E}(\{0,\alpha_E(p)\}+p\IZ).$$
\end{enumerate}

1. Given a sequence of neighborhoods $(U_x)_{x\in E}\in\prod_{x\in E}\tau_x$, for every $x\in E$ find a square-free number $q_x>x$ such that $\Pi_{q_x}\cap \Pi_x=\emptyset$ and $x+q_x\IZ\subseteq U_x$. We claim that the finite set $L=\bigcup_{x\in E}\Pi_{q_x}\setminus A_E$ has the required property. Given any number $z\in \bigcap_{p\in L}p\IZ^\bullet\cap\bigcap_{p\in A_E\setminus\Pi_E}(\{0,\alpha_E(p)\}+p\IZ)$, we should prove that $z\in\overline{U_x}$ for every $x\in E$. By  Lemma~\ref{basic},
$$\IZ^\bullet\cap \bigcap_{p\in\Pi_{q_x}}(\{0,x\}+p\IZ)=\overline{ (x+q_x\IZ)}\subseteq\overline{U_x}.$$ So, it suffices to show that $z\in \{0,x\}+p\IZ$ for any $p\in \Pi_{q_x}$. Since the numbers $x$ and $q_x$ are coprime, $p\notin\Pi_x$ and hence $p\notin\Pi_E$. If $p\notin A_E$, then $p\in \Pi_{q_x}\setminus  A_E\subseteq L$ and hence $z\in p\IN\subseteq \{0,x\}+p\IZ$. If $p\in A_E$, then $x\in E\subseteq\{0,\alpha_E(p)\}+p\IZ$ and $x\in\alpha_E(p)+p\IZ$ (as $p\notin\Pi_x$). Then $x+p\IZ=\alpha_E(p)+p\IZ$ and
$z\in \{0,\alpha_E(p)\}+p\IZ=\{0,x\}+p\IZ.$

2. Fix any finite set $L\subseteq \Pi\setminus  A_E$. For every $x\in E$ consider the neighborhood $U_x=\bigcap_{p\in L\cup A_E\setminus\Pi_x}(x+p\IZ)$ of $x$ in the Kirch topology. By Lemma~\ref{basic},
$$\overline{U_x}=\IZ^\bullet\cap\bigcap_{p\in L\cup A_E\setminus \Pi_x}(\{0,x\}+p\IZ).$$

Given any number $z\in\bigcap_{x\in E} \overline{U_x}$, we should show that $z\in  \bigcap_{p\in L}p\IZ^\bullet\cap\bigcap_{p\in A_E\setminus \Pi_E}(\{0,\alpha_E(p)\}+p\IZ)$.
This will follow as soon as we check that $z\in p\IZ^\bullet$ for all $p\in L$ and $z\in\{0,\alpha_E(p)\}+p\IZ$ for all $p\in A_E\setminus \Pi_E$.

Given any $p\in A_E\setminus\Pi_E$, we can find a point $x\in E\setminus p\IZ$ and observe that $x\in E\subseteq\{0,\alpha_E(p)\}+p\IZ$. Then $z\in \overline{U_x}\subseteq\overline{x+p\IZ}\subseteq \{0,x\}+p\IZ=\{0,\alpha_E(p)\}+p\IZ$.

Now take any prime number $p\in L$. Since $L\cap  A_E=\emptyset$, we conclude that $E\not\subseteq p\IZ$. So, we can fix a number $x\in E\setminus p\IZ$. Taking into account that $p\notin A_E$, we conclude that $E\not\subseteq \{0,x\}+p\IZ$ and hence there exists a number $y\in E$ such that $p\IZ\ne y+p\IZ\ne x+p\IZ$. Then
$$z\in\overline{U_x}\cap\overline{U_y}\subseteq(\{0,x\}+p\IZ)\cap(\{0,y\}+p\IZ)=p\IZ.$$
\end{proof}

We shall use Lemma~\ref{l:Kirch} for an arithmetic characterization of the partial order of the superconnecting poset $\mathfrak F$ of the Kirch space.

\begin{lemma}\label{l:wo2} For two finite subsets $E,F\subseteq \Pi$ with $\min\{|E|,|F|\}\ge2$ we have $\F_E\subseteq \F_F$ if and only if $$A_F\subseteq A_E,\;\; \Pi_F\setminus\{2\}\subseteq \Pi_E \mbox{ \  and \ }\alpha_E{\restriction}A_F\setminus\Pi_E=\alpha_F{\restriction}A_F\setminus\Pi_E.$$
\end{lemma}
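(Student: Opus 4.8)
The plan is to read off both filters from Lemma~\ref{l:Kirch}. For a finite set $G\subseteq\IZ^\bullet$ with $|G|\ge 2$ and a finite set $L\subseteq\Pi\setminus A_G$, I abbreviate
$$S_G(L):=\IZ^\bullet\cap\bigcap_{p\in L}p\IZ\cap\bigcap_{p\in A_G\setminus\Pi_G}\big(\{0,\alpha_G(p)\}+p\IZ\big),$$
so that, by Lemma~\ref{l:Kirch}, the filter $\F_G$ is precisely the family of supersets of the sets $S_G(L)$, $L\in[\Pi\setminus A_G]^{<\w}$. Therefore $\F_E\subseteq\F_F$ if and only if $S_E(L)\in\F_F$ for every finite $L\subseteq\Pi\setminus A_E$, i.e.\ for every such $L$ there is a finite $M\subseteq\Pi\setminus A_F$ with $S_F(M)\subseteq S_E(L)$. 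The whole proof consists in translating this inclusion of filter bases into the three displayed arithmetic conditions; I use freely that, by the Chinese Remainder Theorem, each $S_G(M)$ is nonempty and contains elements with any prescribed residue modulo a prime not among the (pairwise coprime) moduli defining $S_G(M)$, and with residue $\alpha_G(p)$ modulo any $p\in A_G\setminus\Pi_G$.

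For the sufficiency direction I assume the three conditions and, given a finite $L\subseteq\Pi\setminus A_E$, set $M:=L\cup\{p\in A_E\setminus\Pi_E:p\notin A_F\}$; then $M$ is finite and $M\subseteq\Pi\setminus A_F$ because $L\subseteq\Pi\setminus A_E\subseteq\Pi\setminus A_F$ by the inclusion $A_F\subseteq A_E$. I then check $S_F(M)\subseteq S_E(L)$ congruence by congruence. The classes $p\IZ$, $p\in L$, are inherited since $L\subseteq M$. For $p\in A_E\setminus\Pi_E$ there are three cases: if $p\notin A_F$ then $p\in M$, so $S_F(M)\subseteq p\IZ\subseteq\{0,\alpha_E(p)\}+p\IZ$; if $p=2$ then $\alpha_E(2)=1$ and $\{0,\alpha_E(2)\}+2\IZ=\IZ$, so there is nothing to check; if $p\in A_F$ and $p\ne2$, then $p\notin\Pi_F$ (otherwise $\Pi_F\setminus\{2\}\subseteq\Pi_E$ would give $p\in\Pi_E$, contradicting $p\in A_E\setminus\Pi_E$), so $p\in A_F\setminus\Pi_F$, whence $S_F(M)\subseteq\{0,\alpha_F(p)\}+p\IZ$ and $\alpha_F(p)=\alpha_E(p)$ since $p$ belongs to $A_F\setminus\Pi_E$, where $\alpha_E$ and $\alpha_F$ agree. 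Thus $S_F(M)\subseteq S_E(L)$ and $\F_E\subseteq\F_F$.

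For the necessity direction I assume $\F_E\subseteq\F_F$ and probe it with a few generators. First, if some prime $p$ lay in $A_F\setminus A_E$, then $p\ne2$ and $S_E(\{p\})\subseteq p\IZ^\bullet$; a witnessing $M\subseteq\Pi\setminus A_F$ avoids $p$, so $S_F(M)$ contains an element not divisible by $p$ (with residue $\alpha_F(p)\ne0$ if $p\in A_F\setminus\Pi_F$, or with an arbitrary nonzero residue if $p\in\Pi_F$), contradicting $S_F(M)\subseteq S_E(\{p\})\subseteq p\IZ^\bullet$; hence $A_F\subseteq A_E$. Second, for $p\in\Pi_F\setminus\{2\}$ we have $p\in A_F\subseteq A_E$, and if $p\notin\Pi_E$ then $p\in A_E\setminus\Pi_E$ with $\alpha_E(p)\in\{1,\dots,p-1\}$; since $p\ge3$ there is $r\in\{1,\dots,p-1\}\setminus\{\alpha_E(p)\}$, and a witnessing $M$ for $S_E(\emptyset)$ yields $z\in S_F(M)$ with $z\equiv r\pmod p$, contradicting $z\in S_E(\emptyset)\subseteq\{0,\alpha_E(p)\}+p\IZ$; hence $\Pi_F\setminus\{2\}\subseteq\Pi_E$. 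Finally, for $p\in A_F\setminus\Pi_E$ the case $p=2$ is immediate since $\alpha_E(2)=1=\alpha_F(2)$, and for $p\ne2$ the second step gives $p\in A_F\setminus\Pi_F$, so a witnessing $M$ for $S_E(\emptyset)$ yields $z\in S_F(M)$ with $z\equiv\alpha_F(p)\pmod p$; then $z\in S_E(\emptyset)\subseteq\{0,\alpha_E(p)\}+p\IZ$ forces $\alpha_F(p)=\alpha_E(p)$, as $\alpha_F(p)\ne0$. This gives the three conditions.

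The main difficulty I anticipate is purely organisational: keeping track of the interaction between the sets $A_E,A_F$ and the common-divisor sets $\Pi_E,\Pi_F$, correctly isolating the exceptional prime $2$ — for which $\alpha(2)=1$ collapses the class $\{0,\alpha(2)\}+2\IZ$ to all of $\IZ$ and renders the corresponding congruence condition vacuous — being careful that every invocation of the Chinese Remainder Theorem involves only pairwise coprime moduli, and remembering that $\F_E$ has no smallest member, so the argument must run over the whole cofinal family $\{S_E(L):L\in[\Pi\setminus A_E]^{<\w}\}$ rather than a single generator.
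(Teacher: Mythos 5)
Your proof is correct and follows essentially the same route as the paper: both read off the filters $\F_E$, $\F_F$ via the canonical generators supplied by Lemma~\ref{l:Kirch} and compare generator against generator, using the Chinese Remainder Theorem to extract the three arithmetic conditions in the necessity direction and an explicit enlargement of the exceptional-prime set ($M=L\cup((A_E\setminus\Pi_E)\setminus A_F)$, versus the paper's $\Lambda=(L\cup A_E)\setminus A_F$) in the sufficiency direction. The only difference is cosmetic: you probe with the generators $S_E(\{p\})$ and $S_E(\emptyset)$ separately for each condition, while the paper derives all three conditions from a single inclusion.
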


\begin{proof} To prove the ``only if'' part, assume that $\F_E\subseteq\F_F$.  By Lemma~\ref{l:Kirch}, the set $$\bigcap_{p\in A_F\setminus A_E}p\IZ^\bullet\cap\bigcap_{p\in A_E\setminus\Pi_E}(\{0,\alpha_E(p)\}+p\IZ)$$ belongs to the filter $\F_E\subseteq\F_F$.
By Lemma~\ref{l:Kirch}, there exists a finite set $L\subset\Pi\setminus A_F$ such that
\begin{equation}\label{eq1}
 \bigcap_{p\in L}p\IZ^\bullet\cap\bigcap_{p\in A_F\setminus\Pi_F}(\{0,\alpha_F(p)\}+p\IZ)\subseteq \bigcap_{p\in A_F\setminus A_E}p\IZ^\bullet\cap \bigcap_{p\in A_E\setminus\Pi_E}(\{0,\alpha_E(p)\}+p\IZ).
\end{equation}
This inclusion combined with the Chinese Remainder Theorem~\ref{Chinese} implies $$A_F\setminus A_E\subseteq L\subset \Pi\setminus A_F,\;\;A_E\setminus (\Pi_E\cup\{2\})\subseteq L\cup (A_F\setminus\Pi_F)\mbox{ \  and $\alpha_E(p)=\alpha_F(p)$ for any $p\in (A_F\setminus\Pi_F)\cap (A_E\setminus\Pi_E)$,}$$
and
\begin{equation}\label{eq2}
A_F\subseteq A_E,\;\;\Pi_F\setminus\{2\}\subseteq\Pi_E\mbox{ \  and \ }\alpha_E{\restriction}A_F\setminus\Pi_E=\alpha_F{\restriction}A_F\setminus\Pi_E.
\end{equation}

To prove the ``if'' part, assume that the condition (\ref{eq2}) holds. To prove that $\F_E\subseteq\F_F$, fix any set $\Omega\in\F_E$ and using Lemma~\ref{l:Kirch}, find a finite set $L\subseteq \Pi\setminus A_E$ such that
$$\bigcap_{p\in L}p\IZ^\bullet\cap\bigcap_{p\in A_E\setminus\Pi_E}(\{0,\alpha_E(p)\}+p\IZ)\subseteq\Omega.$$
Consider the finite set $\Lambda=(L\cup A_E)\setminus A_F=L\cup (A_E\setminus A_F)\supseteq L$ and observe that the condition (\ref{eq2}) implies the inclusion
\begin{equation}\label{eq3}\F_F\ni
\bigcap_{p\in\Lambda}p\IZ^\bullet\cap\bigcap_{p\in A_F\setminus\Pi_F}(\{0,\alpha_F(p)\}+p\IZ)\subseteq   \bigcap_{p\in L}p\IZ^\bullet\cap\bigcap_{p\in A_E\setminus \Pi_E}(\{0,\alpha_E(p)\}+p\IZ)\subseteq\Omega,
\end{equation}
yielding $\Omega\in\F_F$.
\end{proof}

\begin{lemma}\label{l:wo1} For two nonempty subsets $E,F\subseteq \IN$ with $\min\{|E|,|F|\}=1$ the relation $\F_E\subseteq\F_F$ holds if and only if $|E|=1$ and $E\subseteq F$.
\end{lemma}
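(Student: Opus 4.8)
The plan is to analyze the filters $\F_E$ when at least one of $E$ or $F$ is a singleton, and to show the claimed equivalence using the arithmetic descriptions established so far together with the basic classical results. First I would record what $\F_E$ looks like when $E=\{x\}$ is a singleton: since $A_E=\Pi$ in that case, the only neighborhood bases available at $x$ come from square-free moduli coprime with $x$, so by Lemma~\ref{basic} the closures $\overline{U_x}$ all have the form $\IZ^\bullet\cap\bigcap_{p\in\Pi_q}(\{0,x\}+p\IZ)$ for square-free $q$ coprime with $x$. Intersecting finitely many such closures, one sees that $\F_{\{x\}}$ is precisely the filter generated by sets of the form $\IZ^\bullet\cap\bigcap_{p\in L}(\{0,x\}+p\IZ)$ where $L$ ranges over finite sets of primes not dividing $x$. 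In particular every member of $\F_{\{x\}}$ contains, for some finite $L\subseteq\Pi\setminus\Pi_x$, the set $\bigcap_{p\in L}(\{0,x\}+p\IZ)$, which by the Chinese Remainder Theorem is infinite and in fact contains points congruent to $x$ modulo each $p\in L$ and also points divisible by each $p\in L$.

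Next I would treat the two directions. For the ``if'' direction, suppose $|E|=1$, say $E=\{x\}$, and $E\subseteq F$, so $x\in F$. I want $\F_{\{x\}}\subseteq\F_F$. Given $B\in\F_{\{x\}}$, pick a finite $L\subseteq\Pi\setminus\Pi_x$ with $\IZ^\bullet\cap\bigcap_{p\in L}(\{0,x\}+p\IZ)\subseteq B$. Now for each $y\in F$ choose a neighborhood $U_y$ of $y$: for $y=x$ take $U_x=\bigcap_{p\in L}(x+p\IZ)$ (legitimate since the $p\in L$ are coprime with $x$), and for $y\ne x$ take $U_y=\IZ^\bullet$ (or any fixed neighborhood). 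Then $\bigcap_{y\in F}\overline{U_y}\subseteq\overline{U_x}=\IZ^\bullet\cap\bigcap_{p\in L}(\{0,x\}+p\IZ)\subseteq B$, witnessing $B\in\F_F$. This handles the ``if'' part cleanly; the only care needed is that the $U_y$ for $y\ne x$ are genuine neighborhoods, which holds since $\IZ^\bullet$ itself is open.

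For the ``only if'' direction, the hypothesis $\min\{|E|,|F|\}=1$ leaves three configurations: (a) $|E|=|F|=1$; (b) $|E|=1<|F|$; (c) $|F|=1<|E|$. Assume $\F_E\subseteq\F_F$. In case (c) I would derive a contradiction: if $|F|=1$ then $\F_F=\F_{\{z\}}$ for some $z$, and I would show $\F_{\{z\}}$ is a \emph{strictly} smaller filter than any $\F_E$ with $|E|\ge2$ — indeed by Lemma~\ref{l:Kirch} every member of $\F_E$ contains a set of the form $\bigcap_{p\in L}p\IZ^\bullet\cap\bigcap_{p\in A_E\setminus\Pi_E}(\{0,\alpha_E(p)\}+p\IZ)$, and since $A_E$ is a nontrivial finite set of primes with a genuine two-valued constraint at some prime, such a set need not lie in $\F_{\{z\}}$; concretely one exhibits a member of $\F_E$ that is not in $\F_{\{z\}}$ by choosing $L$ to include a prime $p\notin A_E\cup\Pi_z$ and noting $p\IZ^\bullet\in\F_E$ but $p\IZ^\bullet\notin\F_{\{z\}}$ since every basic set in $\F_{\{z\}}$ contains points not divisible by $p$ (namely points $\equiv z\pmod p$). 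This kills (c). In case (b), $\F_E=\F_{\{x\}}$ for some $x$ and $|F|\ge2$; again I would produce a contradiction, using that $\F_{\{x\}}$ contains sets with no ``$0$-component'' constraint and no forced divisibility, whereas by Lemma~\ref{l:Kirch} applied to $F$ every member of $\F_F$ is large in a way that does not match — more precisely, $\{0,x\}+q\IZ$ (for suitable square-free $q$ coprime with $x$) lies in $\F_{\{x\}}$ but, by comparing with the description of $\F_F$ via Lemma~\ref{l:Kirch} and the Chinese Remainder Theorem, cannot contain any set of the required form for $\F_F$ unless $A_F=\Pi$, impossible for $|F|\ge2$. Finally in case (a), $\F_{\{x\}}\subseteq\F_{\{y\}}$: I would show this forces $x=y$, hence $E\subseteq F$. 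If $x\ne y$, pick a prime $p$ dividing neither $x$ nor $y$ with $x\not\equiv y\pmod p$ (possible since $x-y\ne 0$ has only finitely many prime divisors and we may pick $p$ large, or use Dirichlet to separate residues); then $\{0,x\}+p\IZ\in\F_{\{x\}}$ but this set omits the residue class $y+p\IZ$, while every basic member of $\F_{\{y\}}$ meets $y+p\IZ$, so $\{0,x\}+p\IZ\notin\F_{\{y\}}$, a contradiction. The main obstacle I anticipate is case (b): carefully verifying, via Lemma~\ref{l:Kirch} and the Chinese Remainder Theorem, that no finite intersection of closures of neighborhoods of the $\ge2$ points of $F$ can be squeezed inside a single set of the form $\IZ^\bullet\cap\bigcap_{p\in L}(\{0,x\}+p\IZ)$, because $\F_F$ is genuinely ``wider'' than $\F_{\{x\}}$ — this requires pinning down exactly which sets land in each filter.
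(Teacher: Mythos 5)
Your ``if'' direction and your cases (a) and (c) are essentially workable (in (c) your witness $p\IZ^\bullet$ for a prime $p\notin A_E\cup\Pi_z$, justified via Lemma~\ref{l:Kirch}, is a legitimate alternative to the paper's argument), but your case (b) contains a fatal error. In case (b) you have $|E|=1$, say $E=\{x\}$, and $|F|\ge 2$, and you propose to derive an outright contradiction from $\F_{\{x\}}\subseteq\F_F$. This is impossible: whenever $x\in F$ the inclusion $\F_{\{x\}}\subseteq\F_F$ holds by the monotonicity $\F_E\subseteq\F_F$ for $E\subseteq F$ noted in Section~\ref{s:poset} --- indeed it is exactly what your own ``if'' argument establishes --- so this configuration is consistent with the hypothesis and with the conclusion of the lemma. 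Your specific claim that $\{0,x\}+q\IZ$ ``cannot contain any set of the required form for $\F_F$ unless $A_F=\Pi$'' is false: choosing in Lemma~\ref{l:Kirch} a finite set $L$ with $q\in L$ (possible when $q\notin A_F$) gives a member $\bigcap_{p\in L}p\IZ^\bullet\cap\bigcap_{p\in A_F\setminus\Pi_F}(\{0,\alpha_F(p)\}+p\IZ)\subseteq q\IZ^\bullet\subseteq\{0,x\}+q\IZ$ of $\F_F$; concretely, for $x=1$, $F=\{1,3\}$, $q=5$ one checks $\{0,1\}+5\IZ\in\F_F$.

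What case (b) actually requires --- and what is the real content of the lemma --- is to show that $\F_{\{x\}}\subseteq\F_F$ forces $x\in F$; i.e.\ one must assume $x\notin F$ and only then derive a contradiction. This step is missing from your proposal. The paper does it as follows: pick an odd prime $p>\max(E\cup F)$, note $\{0,x\}+p\IZ=\overline{x+p\IZ}\in\F_{\{x\}}\subseteq\F_F$, so there are square-free numbers $d_y$ coprime with $y$ ($y\in F$) with $\bigcap_{y\in F}\overline{y+d_y\IZ}\subseteq\{0,x\}+p\IZ$; setting $P=\bigcup_{y\in F}\Pi_{d_y}$ and splitting on whether $p\in P$, the Chinese Remainder Theorem produces a point of $\bigcap_{y\in F}\overline{y+d_y\IZ}$ lying outside $\{0,x\}+p\IZ$ (or forcing $y\equiv x\pmod p$ for some $y\in F$, impossible since $p>\max(E\cup F)$ and $x\notin F$). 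Without an argument of this kind your proof establishes only that $|E|=1$, not that $E\subseteq F$, so the ``only if'' direction is not proved.
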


\begin{proof} The ``if'' part is trivial. To prove the ``only if'' part, assume that $\F_E\subseteq\F_F$. First we prove that $|E|=1$. Assuming that $|E|>1$ and taking into account that $\min\{|E|,|F|\}=1$, we conclude that $|F|=1$. Choose a prime number $p>\max(E\cup F)$. Since $\bigcap_{y\in E}\overline{y+p\IZ}\in \F_E\subseteq\F_F$, for the unique number $x$ in the set $F$, there exists a square-free number $d$ such that $\Pi_d\cap\Pi_x=\emptyset$ and $\overline{x+dp\IZ}\subseteq\bigcap_{y\in E}\overline{y+p\IZ}$. By Lemma~\ref{basic},
$$x+qp\IZ^\bullet\subseteq \overline{x+dp\IZ}\subseteq\bigcap_{y\in E}\overline{y+p\IZ}=\bigcap_{y\in E}(\{0,y\}+p\IZ)=p\IZ.$$
The latter equality follows from $p>\max E$ and $|E|>1$. Then $x+dp\IZ^\bullet\subseteq p\IZ$ implies $x\in p\IZ$, which contradicts the choice of $p>\max (E\cup F)\ge x$. This contradiction shows that $|E|=1$. Let $z$ be the unique element of the set $E$.

It remains to prove that $z\in F$. To derive a contradiction, assume that $z\notin F$. Take any odd prime number $p>\max(E\cup F)$ and consider the set $\{0,z\}+p\IZ=\overline{z+p\IZ}\in\F_E\subseteq\F_F$. By the definition of the filter $\F_F$, for every $x\in F$ there exists a square-free number $d_x$ such that $\Pi_{d_x}\cap\Pi_x=\emptyset$ and $$\bigcap_{x\in F}\overline{x+d_x\IZ}\subseteq \overline{z+p\IZ}=\{0,z\}+p\IZ.$$ Consider the set $P=\bigcup_{x\in F}\Pi_{d_x}$.
 If $p\in \Pi_{d_x}$ for some $x\in F$, we can use the Chinese Remainder Theorem~\ref{Chinese} and find a number $$c\in (x+p\IZ)\cap \bigcap_{q\in P\setminus\{p\}}q\IZ\subseteq \bigcap_{y\in F}\overline{y+d_y\IZ}\subseteq \{0,z\}+p\IZ.$$
 Taking into account that $x$ is not divisible by $p$, we conclude that $c\in (x+p\IZ)\cap(z+p\IZ)$ and hence $x-z\in p\IZ$, which contradicts the choice of $p>\max(E\cup F)$. This contradiction shows that $p\notin P$. Since $p\ge 3$, we can find a number $z'\notin\{0,z\}+p\IZ$ and using the Chinese Remainder Theorem~\ref{Chinese}, find a number
 $$u\in (z'+p\IZ)\cap\bigcap_{q\in P}q\IZ^\bullet\subseteq \bigcap_{y\in F}\overline{y+d_y\IZ}\subseteq\{0,z\}+p\IZ,$$ which is a desired contradiction showing that $E\subseteq F$.
\end{proof}





As we know, the largest element of the superconnecting poset $\mathfrak F$ is the superconnecting filter $\F_\infty$. This filter can be characterized as follows.

\begin{lemma}\label{filter0} The superconnecting filter $\F_\infty$ of the Kirch space  is generated by the base consisting of the sets $q\IN$ for an odd square-free number $q\in\IN$, i.e.
$$\F_\infty=\{B\subseteq \IZ^\bullet\colon \exists q\in (2\IN-1)\setminus\bigcup_{p\in\Pi} p^2\IN\;\;(q\IZ^\bullet\subseteq B)\}.$$
\end{lemma}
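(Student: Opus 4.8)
The plan is to unwind the definition of $\F_\infty$ with Lemma~\ref{basic} and then reorganize a finite intersection of closures prime by prime. Every nonempty Kirch-open set contains a basic set $a+b\IZ$ (with $b$ square-free and coprime to $a$), and closure is monotone, so $\F_\infty$ is exactly the family of sets containing some finite intersection $\bigcap_{i=1}^n\overline{a_i+b_i\IZ}$ of closures of basic sets. By Lemma~\ref{basic} this intersection equals $\IZ^\bullet\cap\bigcap_{i=1}^n\bigcap_{p\in\Pi_{b_i}}(\{0,a_i\}+p\IZ)$; writing $P=\bigcup_{i=1}^n\Pi_{b_i}$ and $I_p=\{i:p\in\Pi_{b_i}\}$ for $p\in P$, it becomes $\IZ^\bullet\cap\bigcap_{p\in P}\big(\bigcap_{i\in I_p}(\{0,a_i\}+p\IZ)\big)$.

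For a fixed $p\in P$ and $i\in I_p$ the number $a_i$ is coprime to $b_i$, hence not divisible by $p$, so $\{0,a_i\}+p\IZ$ is a union of two residue classes modulo $p$, one of which is $p\IZ$. Therefore $\bigcap_{i\in I_p}(\{0,a_i\}+p\IZ)$ equals $\{0,c_p\}+p\IZ$ if all residues $a_i\bmod p$ ($i\in I_p$) coincide with a single value $c_p\in\{1,\dots,p-1\}$, and equals $p\IZ$ otherwise. In the second (``collapsing'') case $p$ must be odd, since distinct nonzero residues $a_i,a_j$ require at least three residue classes modulo $p$; in the first case, if $p=2$ then $c_2=1$ and the factor $\{0,1\}+2\IZ=\IZ$ is vacuous and can be discarded. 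Hence every element of $\F_\infty$ contains a set $\IZ^\bullet\cap\bigcap_{p\in S}(\{0,c_p\}+p\IZ)\cap\bigcap_{p\in T}p\IZ$ with $S,T$ finite sets of odd primes; taking $q=\prod_{p\in S\cup T}p$, which is odd and square-free, we obtain $q\IZ^\bullet=q\IZ\cap\IZ^\bullet\subseteq\IZ^\bullet\cap\bigcap_{p\in S\cup T}p\IZ$, and the right-hand side is contained in the displayed set. This establishes the inclusion ``$\subseteq$'' of the asserted identity.

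For the reverse inclusion, since $\F_\infty$ is an upward-closed filter it suffices to check that $q\IZ^\bullet\in\F_\infty$ for every odd square-free $q$. Each prime $p\mid q$ satisfies $p\ge3$, so $1$ and $2$ are distinct nonzero residues modulo $p$; the sets $1+p\IZ$ and $2+p\IZ$ are nonempty subbasic open subsets of $(\IZ^\bullet,\tau_K)$, and by Lemma~\ref{basic} $\overline{1+p\IZ}\cap\overline{2+p\IZ}=\IZ^\bullet\cap(\{0,1\}+p\IZ)\cap(\{0,2\}+p\IZ)=p\IZ^\bullet$. As $q$ is square-free, $q\IZ^\bullet=\bigcap_{p\mid q}p\IZ^\bullet$ is a finite intersection of closures of nonempty open sets, so $q\IZ^\bullet\in\F_\infty$.

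The argument is essentially bookkeeping once Lemma~\ref{basic} is available; the only points demanding care are the reduction from arbitrary open sets to basic arithmetic progressions, the fact that the prime $2$ never contributes a nontrivial closure constraint (which is precisely why the generating sets $q$ may be taken odd), and the remark that a prime at which the local intersection collapses to $p\IZ$ is automatically odd. I expect no genuine obstacle beyond keeping these bookkeeping details straight.
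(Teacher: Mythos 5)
Your proof is correct and follows essentially the same route as the paper: the forward inclusion is the prime-by-prime bookkeeping that Lemma~\ref{basic} makes routine (the paper states this in one line), and the reverse inclusion uses the same witnesses with residues $1$ and $2$, the only cosmetic difference being that the paper intersects the closures of the two sets $1+q\IZ$ and $2+q\IZ$ at once rather than working with $1+p\IZ$, $2+p\IZ$ separately for each $p\mid q$. No gaps worth noting.
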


\begin{proof} Lemma~\ref{basic} implies that each element $F\in\F_\infty$ contains the set $q\IZ^\bullet$ for some odd square-free number $q$. Conversely, let $q$ be an odd square-free number. Then the sets $U_1=1+q\IZ$ and $U_2=2+q\IZ$ are open in the Kirch topology on $\IZ^\bullet$. By Lemma~\ref{basic} we have
		$$\overline{U_1}\cap\overline{U_2}=\mathbb{Z}^\bullet\cap\bigcap_{p\in\Pi_q}(\{0,1\}+p\IZ)\cap(\{0,2\}+p\IZ)=\IZ^\bullet\cap\bigcap_{p\in\Pi_q}p\IZ=  q\IZ^\bullet.$$
		Hence $q\IZ^\bullet\in\F_\infty$.
\end{proof}

\begin{lemma}\label{l:2max} For a nonempty subset $E\subseteq \IZ^\bullet$ the following conditions are equivalent:
\begin{enumerate}
\item $\F_E=\F_\infty$;
\item $A_E=\{2\}$.
\end{enumerate}
If  $|E|=2$, then the conditions \textup{(1), (2)} are equivalent to
\begin{enumerate}
\item[(3)]  $E\in\big\{\{2^n,2^{n+1}\},\{-2^n,-2^{n+1}\},\{-2^n,2^n\}:n\in\w\big\}$.
\end{enumerate}
\end{lemma}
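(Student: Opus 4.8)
The plan is to establish the equivalence $(1)\Leftrightarrow(2)$ first, splitting on the size of $E$, and then, when $|E|=2$, to read off $(3)$ from $(2)$ via Lemma~\ref{2ae}. If $E=\{z\}$ is a singleton then $A_E=\Pi\ne\{2\}$, so $(2)$ fails; and $(1)$ fails too, since for an odd square-free number $q$ not dividing $z$ (say, a prime not dividing $z$) one has $q\IZ^\bullet\in\F_\infty$ by Lemma~\ref{filter0}, whereas $q\IZ^\bullet\notin\F_{\{z\}}$ because $z\in\overline U$ for every $U\in\tau_z$ while $z\notin q\IZ^\bullet$. So both conditions are false for singletons and the equivalence is vacuous; henceforth $|E|\ge2$, and the arithmetic description of $\F_E$ from Lemma~\ref{l:Kirch} is available.

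For $(2)\Rightarrow(1)$: if $A_E=\{2\}$ then $\Pi_E\subseteq A_E=\{2\}$ and $\alpha_E(2)=1$, so $\{0,\alpha_E(2)\}+2\IZ=\IZ$ and the formula of Lemma~\ref{l:Kirch} collapses to
$$\F_E=\{B\subseteq\IZ^\bullet:\ \exists\,L\in[\Pi\setminus\{2\}]^{<\w}\ \text{ with }\ \textstyle\bigcap_{p\in L}p\IZ^\bullet\subseteq B\}.$$
Since $\bigcap_{p\in L}p\IZ^\bullet=q\IZ^\bullet$ for $q=\prod_{p\in L}p$, and $q$ runs exactly over the odd square-free numbers, Lemma~\ref{filter0} identifies the right-hand side with $\F_\infty$. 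For $(1)\Rightarrow(2)$ I argue by contraposition: suppose $A_E$ contains an odd prime $r$. Then $r\IZ^\bullet\in\F_\infty$ (as $r$ is odd and square-free), so it is enough to show $r\IZ^\bullet\notin\F_E$. By Lemma~\ref{l:Kirch} this reduces to checking that no set $S=\bigcap_{p\in L}p\IZ^\bullet\cap\bigcap_{p\in A_E\setminus\Pi_E}(\{0,\alpha_E(p)\}+p\IZ)$ with finite $L\subseteq\Pi\setminus A_E$ is contained in $r\IZ^\bullet$. The moduli occurring in $S$ are pairwise distinct primes, all different from $r$; moreover, if $r\in A_E\setminus\Pi_E$ then $\alpha_E(r)\ne0$ (otherwise condition~(ii) would force $E\subseteq r\IZ$, contradicting $r\notin\Pi_E$), so the only $r$-congruence that $S$ could impose can be met by a residue class coprime to $r$. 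The Chinese Remainder Theorem~\ref{Chinese} then produces a positive integer in $S$ not divisible by $r$, so $S\not\subseteq r\IZ^\bullet$ and hence $\F_E\ne\F_\infty$.

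It remains to prove $(2)\Leftrightarrow(3)$ when $|E|=2$, say $E=\{x,y\}$. By Lemma~\ref{2ae}, $A_E=\Pi_x\cup\Pi_y\cup\Pi_{x-y}$, and since at least one of $x,y,x-y$ is even, $2$ always lies in this union; hence $A_E=\{2\}$ is equivalent to $\Pi_x,\Pi_y,\Pi_{x-y}\subseteq\{2\}$, i.e.\ to $|x|$, $|y|$ and $|x-y|$ all being powers of $2$ (allowing $1=2^0$, since $\Pi_{\pm1}=\emptyset$). Write $x=\varepsilon_1 2^a$ and $y=\varepsilon_2 2^b$ with $\varepsilon_1,\varepsilon_2\in\{-1,1\}$ and $a\ge b$. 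If $\varepsilon_1=\varepsilon_2$, then $a>b$ (else $x=y$) and $|x-y|=2^b(2^{a-b}-1)$, which is a power of $2$ precisely when $2^{a-b}-1=1$, i.e.\ $a=b+1$; this gives $E=\{2^b,2^{b+1}\}$ or $E=\{-2^b,-2^{b+1}\}$. If $\varepsilon_1\ne\varepsilon_2$, then $|x-y|=2^b(2^{a-b}+1)$; for $a>b$ the factor $2^{a-b}+1$ is odd and greater than $1$, so $|x-y|$ is not a power of $2$, whereas $a=b$ yields $|x-y|=2^{b+1}$; thus $a=b$ and $E=\{-2^b,2^b\}$. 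Conversely, each set listed in $(3)$ visibly has $\Pi_x,\Pi_y,\Pi_{x-y}\subseteq\{2\}$, hence $A_E=\{2\}$. This establishes $(2)\Leftrightarrow(3)$ and completes the proof.

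The filter manipulations via Lemmas~\ref{l:Kirch} and~\ref{filter0} are routine; the two points that need genuine care are the Chinese-Remainder bookkeeping in $(1)\Rightarrow(2)$ — verifying that the prescribed residues are simultaneously realizable and that $r$ is either unconstrained in $S$ or confined only to a nonzero residue class modulo $r$ — and the elementary number theory in the case $|E|=2$, where one must keep track of the boundary cases $a=0$ or $b=0$ (there the relevant set of prime divisors is empty rather than $\{2\}$).
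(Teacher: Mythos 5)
Your proof is correct, and the $(2)\Rightarrow(1)$ and $(2)\Leftrightarrow(3)$ parts coincide with the paper's (collapse of the Lemma~\ref{l:Kirch} description to the odd square-free sets $q\IZ^\bullet$ and comparison with Lemma~\ref{filter0}; the elementary analysis of $\{\varepsilon 2^a,\delta 2^b\}$ via Lemma~\ref{2ae}). Where you genuinely diverge is in $(1)\Rightarrow(2)$: the paper compares $\F_E$ with the auxiliary filter $\F_{\{1,2\}}$, invoking the order characterization of Lemma~\ref{l:wo2} twice ($\F_E\subseteq\F_{\{1,2\}}$ is forced, maximality of $\F_\infty$ gives equality, and equality gives $A_E\subseteq A_{\{1,2\}}=\{2\}$), whereas you argue by contraposition directly from Lemma~\ref{l:Kirch}: given an odd prime $r\in A_E$, you exhibit via the Chinese Remainder Theorem a point of each generating set of $\F_E$ lying outside $r\IZ^\bullet$, so that $r\IZ^\bullet\in\F_\infty\setminus\F_E$. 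Your route bypasses Lemma~\ref{l:wo2} entirely (and in particular its hypothesis $\min\{|E|,|F|\}\ge 2$), at the cost of redoing a CRT argument that Lemma~\ref{l:wo2} already encapsulates; you also dispose of the singleton case explicitly, which the paper leaves tacit. One sentence of yours should be repaired: the claim that the moduli occurring in $S$ are ``all different from $r$'' is literally false when $r\in A_E\setminus\Pi_E$ (then $r$ itself is one of the moduli); your subsequent clause — that in this case $\alpha_E(r)\ne 0$, so the $r$-constraint admits the nonzero class $\alpha_E(r)+r\IZ$ — is exactly what makes the CRT choice possible, so the argument stands, but the first clause should be restricted to the primes coming from $L$ and from $(A_E\setminus\Pi_E)\setminus\{r\}$.
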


\begin{proof} 

$(1)\Rightarrow(2)$: Assume $\F_{E}=\F_\infty$. Consider the set $F=\{1,2\}$ and observe that $A_F=\Pi_1\cup\Pi_2\cup\Pi_{2-1}=\{2\}$ and $\Pi_F=\emptyset$. Thus $A_F\subseteq A_E$, $\Pi_F{\setminus}\{2\}\subseteq \Pi_E$ and $\alpha_F{\restriction}A_F{\setminus} \Pi_E=\alpha_E{\restriction} A_F{\setminus} \Pi_E$. Lemma~\ref{l:wo2} implies $\F_{E}\subseteq \F_F$. Since $\F_{E}=\F_\infty$ is the largest element of $\mathfrak{F}$ we get $\F_E=\F_F$. By using again Lemma~\ref{l:wo2} we get $A_E\subseteq A_F$ which implies that $A_E = \{2\}$.
\vskip3pt	
	
$(2)\Rightarrow(1)$:
If $A_E=\{2\}$, then by the Lemma~\ref{l:Kirch}, the filter $\F_{E}$ is generated by the base consisting of the sets $q\IZ^\bullet$ for an odd square-free number $q\in\IZ^\bullet$. Therefore $\F_E=\F_\infty$ by the Lemma~\ref{filter0}.
\smallskip	

$(2)\Rightarrow(3)$:	
 Assume that $|E|=2$ and $A_E=\{2\}$. By Lemma~\ref{2ae}, $E=\{\varepsilon2^a,\delta2^b\}$, where $a,b\in \omega$ and $\varepsilon,\delta\in\{-1,1\}$. Without loss of generality we can assume that $b\leq a$. By Lemma~\ref{2ae},  $\Pi_{\varepsilon2^a-\delta2^b}=\Pi_{ \varepsilon2^b(2^{a-b}-\delta/\varepsilon)}\subseteq\{2\}$. The last inclusion implies  that $a-b=1$ and  $\delta/\varepsilon=1$ or  $a-b=0$ and $\delta/\varepsilon=-1$. In  the first case the set $E$ equals $\{2^b,2^{b+1}\}$ or $\{-2^b,-2^{b+1}\}$, in the second case  $E=\{2^b,-2^{b}\}$.

$(3)\Rightarrow(2)$: The implication $(3)\Ra(2)$ follows from Lemma~\ref{2ae}.
\end{proof}

In the following lemmas by $\mathfrak F'$ we denote the set  of maximal elements of the poset $\mathfrak F\setminus\{\F_\infty\}$.


\begin{lemma}\label{l:max} For a nonempty finite subset $E\subseteq \IZ^\bullet $ the filter $\F_E$ belongs to the family $\mathfrak F'$ if and only if there exists an odd prime number $p\notin\Pi_E$ such that $A_E=\{2,p\}$.
\end{lemma}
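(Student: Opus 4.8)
The plan is to translate maximality of $\F_E$ in $\mathfrak F\setminus\{\F_\infty\}$ into an arithmetic condition on the pair $(A_E,\alpha_E)$ via the order descriptions of Lemmas~\ref{l:wo1} and~\ref{l:wo2}, and to use the realization Lemma~\ref{l:realization} to exhibit a strictly larger filter $\F_F\ne\F_\infty$ whenever $A_E$ is not of the form $\{2,p\}$ with an odd prime $p\notin\Pi_E$. As a preliminary reduction I would show that any $\F_E\in\mathfrak F'$ has $|E|\ge2$: if $E=\{z\}$, choose an odd prime $p$ not dividing $z$ and put $w=z+p$, so that $w\in\IZ^\bullet\setminus\{z\}$ and $p\in\Pi_{z-w}\subseteq A_{\{z,w\}}$ by Lemma~\ref{2ae}; then $\F_{\{z,w\}}\ne\F_\infty$ by Lemma~\ref{l:2max}, while $\F_{\{z\}}\subsetneq\F_{\{z,w\}}$ by Lemma~\ref{l:wo1}, contradicting maximality. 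So from now on $|E|\ge2$, and by Lemma~\ref{l:2max} the hypothesis $\F_E\ne\F_\infty$ means $A_E\supsetneq\{2\}$, i.e.\ $A_E$ contains an odd prime.

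For the ``only if'' part I would argue by contradiction: suppose $\F_E\in\mathfrak F'$ but either $|A_E|\ge3$, or $A_E=\{2,p\}$ with $p\in\Pi_E$. Fix an odd prime $p\in A_E$, and apply Lemma~\ref{l:realization} to the pair $(\{2,p\},\alpha)$ where $\alpha(2)=1$ and $\alpha(p)=\alpha_E(p)$ if $p\notin\Pi_E$, while $\alpha(p)=1$ if $p\in\Pi_E$; this produces a set $F=\{y,p,2p\}$ with $A_F=\{2,p\}$ and $\alpha_F=\alpha$. In both cases $\alpha(p)\ne0$ (for $p\notin\Pi_E$ because $\alpha_E(p)=0$ would force $E\subseteq p\IZ$, i.e.\ $p\in\Pi_E$), hence $p\nmid y$ and $\Pi_F=\emptyset$. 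A direct check of the three conditions of Lemma~\ref{l:wo2} then gives $\F_E\subseteq\F_F$: indeed $A_F=\{2,p\}\subseteq A_E$, $\Pi_F\setminus\{2\}=\emptyset\subseteq\Pi_E$, and $\alpha_E$ and $\alpha_F$ agree on $A_F\setminus\Pi_E$ (on $p$ by the choice of $\alpha(p)$ when $p\notin\Pi_E$, and otherwise on $2$, where both equal $1$). Since $A_F\ne\{2\}$ we have $\F_F\ne\F_\infty$ by Lemma~\ref{l:2max}, and the inclusion $\F_E\subseteq\F_F$ is strict: applying Lemma~\ref{l:wo2} in the other direction, $\F_F\subseteq\F_E$ would force $A_E\subseteq A_F=\{2,p\}$ and $\Pi_E\setminus\{2\}\subseteq\Pi_F=\emptyset$, but the first fails when $|A_E|\ge3$ and the second fails when $A_E=\{2,p\}$ with $p\in\Pi_E$. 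Thus $\F_E\subsetneq\F_F\subsetneq\F_\infty$, contradicting $\F_E\in\mathfrak F'$.

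For the ``if'' part, assume $A_E=\{2,p\}$ with an odd prime $p\notin\Pi_E$; then $\F_E\ne\F_\infty$ by Lemma~\ref{l:2max} and $|E|\ge2$ since $A_E$ is finite. Let $F\subseteq\IZ^\bullet$ be finite with $\F_F\ne\F_\infty$ and $\F_E\subseteq\F_F$; I must deduce $\F_F=\F_E$. First $|F|\ge2$: if $F=\emptyset$ then $\F_F=\F_\emptyset$ is the least element of $\mathfrak F$, whereas $\F_E$ strictly contains $\F_\emptyset$ (by Lemma~\ref{l:Kirch} it contains the proper subset $\{0,\alpha_E(p)\}+p\IZ$ of $\IZ^\bullet$, as $p\ge3$); and $|F|=1$ would contradict Lemma~\ref{l:wo1} since $|E|\ge2$. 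Now Lemma~\ref{l:wo2} applies and gives $A_F\subseteq A_E=\{2,p\}$; since $2\in A_F$ and $A_F=\{2\}$ would give $\F_F=\F_\infty$ by Lemma~\ref{l:2max}, we get $A_F=\{2,p\}=A_E$. As $p\notin\Pi_E$ and $\Pi_E,\Pi_F\subseteq\{2,p\}$, the condition $\Pi_F\setminus\{2\}\subseteq\Pi_E$ forces $\Pi_E\setminus\{2\}=\Pi_F\setminus\{2\}=\emptyset$, and the $\alpha$-condition of Lemma~\ref{l:wo2} restricted to $A_F\setminus\Pi_E\ni p$ forces $\alpha_E(p)=\alpha_F(p)$. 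Substituting these facts together with $\alpha_E(2)=\alpha_F(2)=1$ into the three conditions of Lemma~\ref{l:wo2} in the reverse direction yields $\F_F\subseteq\F_E$, so $\F_F=\F_E$ and $\F_E\in\mathfrak F'$.

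The step I expect to require the most care is the bookkeeping around the prime $2$: since $2\in A_E$ always and $\alpha_E(2)=1$ regardless of whether $2\in\Pi_E$, one must track the sets $A_F\setminus\Pi_E$ and $A_E\setminus\Pi_F$ according to whether $2$ lies in $\Pi_E$ or $\Pi_F$; the point is that the common value $1$ on $2$ makes the relevant restrictions of $\alpha_E$ and $\alpha_F$ agree automatically, so no genuine difficulty arises. A secondary point is to confirm that the set $F$ supplied by Lemma~\ref{l:realization} really has $\Pi_F=\emptyset$ (equivalently $p\nmid y$), which is precisely why we insist on $\alpha(p)\ne0$ in the construction.
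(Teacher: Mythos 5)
Your proof is correct and follows essentially the same route as the paper: both directions hinge on Lemmas~\ref{l:wo2}, \ref{l:2max} and a witness set of the form $\{a,p,2p\}$ (your $F=\{y,p,2p\}$ from Lemma~\ref{l:realization} plays the role of the paper's sets $\{p,2p\}$, $\{1,p,2p\}$ and $\{\alpha_E(p),p,2p\}$), and your ``if'' part matches the paper's almost verbatim. The only differences are organizational: you fold the ``only if'' part into a single two-case contradiction rather than the paper's two sequential steps, and you explicitly dispose of the cases $|E|=1$ and $|F|\le 1$ via Lemma~\ref{l:wo1}, a point the paper passes over silently.
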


\begin{proof} To prove the ``if'' part, assume that $A_E=\{2,p\}$ and $p\notin\Pi_E$ for some odd prime number $p$. By Lemma~\ref{l:2max}, $\F_E\ne\F_\infty$. To show that the filter $\F_E$ is maximal in $\mathfrak F\setminus\{\F_\infty\}$, take any finite set $F\subset \IZ^\bullet $ such that $\F_E\subseteq\F_F\ne\F_\infty$. By Lemmas~\ref{l:wo2} and \ref{l:2max}, $\{2\}\ne A_F\subseteq\{2,p\}$, $\Pi_F\subseteq \Pi_E\cup\{2\}=\{2\}$, and $\alpha_F{\restriction}A_F\setminus \Pi_E=\alpha_E{\restriction}A_F\setminus \Pi_E$. It follows that $A_F=\{2,p\}=A_E$, $\Pi_F\cup\{2\}=\Pi_E\cup\{2\}$ and $\alpha_F=\alpha_E$. Applying Lemma~\ref{l:wo2}, we conclude that $\F_E=\F_F$, which means that the filter $\F_E$ is a maximal element of the poset $\mathcal F\setminus\{\F_\infty\}$.
\smallskip

To prove the ``only if'' part, assume that $\F_E\in\mathfrak F'$. By Lemma~\ref{l:2max}, $A_E\ne\{2\}$ and hence there exists an odd prime number $p\in A_E$.
We claim that $p\notin\Pi_E$. To derive a contradiction, assume that $p\in \Pi_E$ and consider the sets $F=\{p,2p\}$ and $G=\{1,p,2p\}$. By Lemma~\ref{2ae}, $A_F=A_G=\{2,p\}$, $\Pi_F=\{p\}$, and $\Pi_G=\emptyset$. Taking into account that  $F\subset G$, $A_F=\{2,p\}\subseteq A_E$, $\Pi_F\setminus\{2\}=\{p\}\subseteq \Pi_E$ and $A_F\setminus\Pi_E\subseteq\{2\}$, we can apply Lemmas~\ref{l:wo2}, \ref{l:2max} and conclude that $\F_E\subseteq \F_F\subseteq\F_G\ne\F_\infty$. The maximality of $\F_E$ implies $\F_E=\F_F=\F_G$. By Lemma~\ref{l:wo2}, the equality $\F_G=\F_F$ implies $p\in\Pi_F\setminus\{2\}\subseteq\Pi_G=\emptyset$, which is a contradiction showing that $p\notin\Pi_E$.

Now consider the set $H=\{\alpha_E(p),p,2p\}$ and observe that $A_H=\{2,p\}$, $\Pi_H=\emptyset$  and $\alpha_H(p)=\alpha_E(p)$. Lemmas~\ref{l:wo2} and \ref{l:2max} guarantee that $\F_E\subseteq\F_H\ne\F_\infty$. By the maximality of $\F_E$, we have $\F_E=\F_H$. Applying Lemma~\ref{l:wo2} once more, we conclude that $A_E=A_H=\{2,p\}$.
\end{proof}

Lemma~\ref{l:max} implies the following description of the set $\mathfrak F'$.

\begin{lemma} $\mathfrak F'=\{\F_{\{a,p,2p\}}:p\in\Pi\setminus\{2\},\;\;a\in\{1,\dots,p-1\}\}$.
\end{lemma}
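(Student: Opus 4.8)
The plan is to read this description off from Lemma~\ref{l:max}, which says that $\F_E\in\mathfrak F'$ exactly when $A_E=\{2,p\}$ for some odd prime $p\notin\Pi_E$, and then to use the order criterion of Lemma~\ref{l:wo2} to pin down which sets $E$ realise a prescribed pair $(p,\alpha_E(p))$.

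For the inclusion $\supseteq$, fix an odd prime $p$ and $a\in\{1,\dots,p-1\}$, and set $E=\{a,p,2p\}$. Since $p\nmid a$ we have $p\notin\Pi_E$, while all three elements of $E$ lie in $\{0,a\}+p\IZ$, so $p\in A_E$. Applying Lemma~\ref{2ae} to the distinct elements $p,2p\in E$ gives $A_E\subseteq A_{\{p,2p\}}=\Pi_p\cup\Pi_{2p}\cup\Pi_{-p}=\{2,p\}$, and since $2\in A_E$ always, $A_E=\{2,p\}$. Hence $\F_{\{a,p,2p\}}\in\mathfrak F'$ by Lemma~\ref{l:max}.

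For the inclusion $\subseteq$, take $\F_E\in\mathfrak F'$. By Lemma~\ref{l:max} there is an odd prime $p\notin\Pi_E$ with $A_E=\{2,p\}$; note $|E|\ge 2$, since otherwise $A_E=\Pi$. From $\Pi_E\subseteq A_E=\{2,p\}$ and $p\notin\Pi_E$ we get $\Pi_E\subseteq\{2\}$. Put $a:=\alpha_E(p)\in\{0,\dots,p-1\}$; if $a=0$ then condition (ii) in the definition of $\alpha_E$ gives $E\subseteq p\IZ$, so $p\in\Pi_E$, a contradiction, and therefore $a\in\{1,\dots,p-1\}$. Now let $H=\{a,p,2p\}$. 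By the computation of the previous paragraph $A_H=\{2,p\}$ and $\Pi_H=\emptyset$, while $\alpha_H(2)=1=\alpha_E(2)$ and $\alpha_H(p)=a=\alpha_E(p)$, so the restrictions of $\alpha_E$ and $\alpha_H$ agree on every subset of $\{2,p\}$; moreover $\Pi_H\setminus\{2\}=\emptyset\subseteq\Pi_E$ and $\Pi_E\setminus\{2\}=\emptyset\subseteq\Pi_H$. Two applications of Lemma~\ref{l:wo2}, to the pairs $(E,H)$ and $(H,E)$, then yield $\F_E\subseteq\F_H$ and $\F_H\subseteq\F_E$, so $\F_E=\F_H=\F_{\{a,p,2p\}}$ with $a\in\{1,\dots,p-1\}$, as claimed.

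There is no genuine obstacle here: the statement is essentially a repackaging of Lemma~\ref{l:max}. The only point requiring a little care is the bookkeeping of $\Pi_E$ when invoking Lemma~\ref{l:wo2} -- one cannot exclude $2\in\Pi_E$ -- but this is harmless, because $\alpha_E(2)=1=\alpha_H(2)$ by condition (iii) in the definition of $\alpha$, so the relevant restrictions of $\alpha_E$ and $\alpha_H$ coincide whether or not $2$ is deleted. As a byproduct the same computation with Lemma~\ref{l:wo2} shows that distinct pairs $(p,a)$ give distinct filters $\F_{\{a,p,2p\}}$, although this injectivity is not needed for the statement.
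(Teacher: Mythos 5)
Your proof is correct and follows essentially the paper's route: the paper states this lemma as an immediate consequence of Lemma~\ref{l:max}, whose proof already contains the key construction $H=\{\alpha_E(p),p,2p\}$ together with the Lemma~\ref{l:wo2} computation showing $\F_E=\F_H$, which is exactly the argument you spell out (plus the easy verification via Lemma~\ref{2ae} that $A_{\{a,p,2p\}}=\{2,p\}$). Your care about whether $2\in\Pi_E$ when applying Lemma~\ref{l:wo2} is appropriate and handled correctly.
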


Let $\mathfrak F''$ be the set of maximal elements of the poset $\mathfrak F\setminus(\mathfrak F'\cup\{\mathcal F_\infty\})$

\begin{lemma}\label{efbis}
	 For a finite set $E\subset\IZ^\bullet$, the filter $\mathcal F_E$ belongs to the family $\mathfrak F''$ if and only if one of the following conditions holds:
\begin{enumerate}
\item there exists an odd prime number $p$ such that $p\in \Pi_E$ and $A_E=\{2,p\}$;
\item there are two distinct odd prime numbers $p,q$ such that $A_E=\{2,p,q\}$ and $\Pi_E\subseteq\{2\}$.
\end{enumerate}
\end{lemma}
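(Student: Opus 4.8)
The plan is to characterize $\mathfrak F''$ by first understanding which filters $\F_E$ lie strictly below $\F_\infty$ but are not dominated by any element of $\mathfrak F'$, and among those identifying the maximal ones. By Lemma~\ref{l:2max}, a filter $\F_E$ with $|E|\ge 2$ differs from $\F_\infty$ precisely when $A_E\ne\{2\}$; by Lemma~\ref{l:max}, $\F_E\notin\mathfrak F'$ (yet $\F_E\ne\F_\infty$) exactly when $A_E$ is not of the form $\{2,p\}$ with $p\notin\Pi_E$ an odd prime. So the candidates for $\mathfrak F''$ are the $\F_E$ with either $A_E=\{2,p\}$ and $p\in\Pi_E$, or $|A_E|\ge 3$. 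I would first verify that each filter of type (1) or (2) is \emph{below} no element of $\mathfrak F'$: for type (1), any $\F_F\in\mathfrak F'$ above $\F_E$ would force (via Lemma~\ref{l:wo2}) $A_F\subseteq A_E=\{2,p\}$ with $p\notin\Pi_F$, hence $A_F=\{2,p\}$, but also $\Pi_F\setminus\{2\}\subseteq\Pi_E$ which is compatible; the obstruction instead is that $\F_E\subseteq\F_F$ with $\Pi_E\ni p$ and $\Pi_F\not\ni p$ contradicts monotonicity the other way, so one checks $\F_E\not\subseteq\F_F$ directly. For type (2), $A_E=\{2,p,q\}\not\subseteq\{2,r\}$ rules out domination by any $\F_F\in\mathfrak F'$.

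Next I would prove maximality. Fix $\F_E$ of type (1) or (2) and suppose $\F_E\subseteq\F_F$ with $\F_F\notin\mathfrak F'\cup\{\F_\infty\}$. By Lemma~\ref{l:wo2}, $A_F\subseteq A_E$, $\Pi_F\setminus\{2\}\subseteq\Pi_E$, and $\alpha_F$ and $\alpha_E$ agree on $A_F\setminus\Pi_E$. In case (1), $A_F\subseteq\{2,p\}$ and $A_F\ne\{2\}$ would make $\F_F\in\mathfrak F'$ unless $p\in\Pi_F$; but $\Pi_F\setminus\{2\}\subseteq\Pi_E=\{p,\dots\}$, and in fact $\Pi_F\ni p$ would need to be reconciled — if $A_F=\{2\}$ then $\F_F=\F_\infty$, contradiction, so $A_F=\{2,p\}$ and $p\in\Pi_F$ (else $\F_F\in\mathfrak F'$); then $A_F=A_E$, $\Pi_F\cup\{2\}=\Pi_E\cup\{2\}$ on the relevant primes, and $\alpha_F=\alpha_E$, so Lemma~\ref{l:wo2} gives $\F_F\subseteq\F_E$, i.e. equality. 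In case (2), $A_F\subseteq\{2,p,q\}$; if $|A_F|\le 2$ then $\F_F\in\mathfrak F'\cup\{\F_\infty\}$, excluded, so $A_F=\{2,p,q\}=A_E$; combined with $\Pi_F\subseteq\{2\}$ (forced since $\Pi_F\setminus\{2\}\subseteq\Pi_E\subseteq\{2\}$) and $\alpha_F=\alpha_E$ on $A_F\setminus\Pi_E=\{2,p,q\}\setminus\{2\}$ (note $\alpha_F(2)=\alpha_E(2)=1$ automatically), Lemma~\ref{l:wo2} again yields $\F_E=\F_F$.

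For the converse direction, I would show every maximal element of $\mathfrak F\setminus(\mathfrak F'\cup\{\F_\infty\})$ has $A_E$ of one of these two forms. Given $\F_E$ maximal there, Lemma~\ref{l:2max} gives $A_E\ne\{2\}$, and Lemma~\ref{l:max} forbids $A_E=\{2,p\}$ with $p\notin\Pi_E$; so either $A_E=\{2,p\}$ with $p\in\Pi_E$ (type (1)), or $A_E$ contains two distinct odd primes $p,q$. In the latter case I must upgrade to exactly $A_E=\{2,p,q\}$ with $\Pi_E\subseteq\{2\}$: using Lemma~\ref{l:realization}, realize the filter $\F_H$ with $A_H=\{2,p,q\}$, $\Pi_H=\emptyset$, and $\alpha_H$ agreeing with $\alpha_E$ on $\{p,q\}$; Lemma~\ref{l:wo2} shows $\F_E\subseteq\F_H$, and $\F_H\notin\mathfrak F'\cup\{\F_\infty\}$ by the ``if'' part already proved, so maximality forces $\F_E=\F_H$, whence $A_E=\{2,p,q\}$ and $\Pi_E\subseteq\{2\}$. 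The main obstacle I anticipate is this last step: carefully choosing the right ``test set'' $H$ (via Lemma~\ref{l:realization}) so that $\F_H$ is provably in $\mathfrak F''$ and dominates $\F_E$, and in case (1) handling the interplay between $\Pi_E$, $\Pi_F$, and the $\{2\}$-exceptions in Lemma~\ref{l:wo2} without losing the prime $p$; the $\alpha(2)=1$ convention and the $\setminus\{2\}$ in the $\Pi$-conditions require attention to avoid off-by-one pitfalls.
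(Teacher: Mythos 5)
Your core argument --- exclusion from $\mathfrak F'\cup\{\F_\infty\}$ via Lemmas~\ref{l:2max} and \ref{l:max}, maximality via Lemma~\ref{l:wo2} in the two cases, and the converse via a test set built with Lemma~\ref{l:realization} --- is exactly the paper's proof and it works. However, your opening step, the claim that a filter of type (1) or (2) lies \emph{below no element of} $\mathfrak F'$, is false, and the justification you give misreads the direction of Lemma~\ref{l:wo2}: $\F_E\subseteq\F_F$ requires $A_F\subseteq A_E$ and $\Pi_F\setminus\{2\}\subseteq\Pi_E$, so neither ``$p\in\Pi_E$ but $p\notin\Pi_F$'' nor ``$A_E=\{2,p,q\}\not\subseteq\{2,r\}$'' is an obstruction to an element of $\mathfrak F'$ sitting above $\F_E$. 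Concretely, $\F_{\{p,2p\}}\subseteq\F_{\{1,p,2p\}}\in\mathfrak F'$ (here $A_F=\{2,p\}=A_E$, $\Pi_F=\emptyset$, and the $\alpha$'s agree on $A_F\setminus\Pi_E=\{2\}$), and likewise a type (2) filter $\F_{\{x,pq,2pq\}}$ lies below $\F_{\{x,p,2p\}}$ and $\F_{\{x,q,2q\}}$; indeed Lemma~\ref{l:p2p-fix} of the paper is built precisely on counting the elements of $\mathfrak F'$ \emph{above} each element of $\mathfrak F''$. Fortunately this claim is never needed: membership in $\mathfrak F''$ only asks for maximality inside $\mathfrak F\setminus(\mathfrak F'\cup\{\F_\infty\})$, which your subsequent argument establishes correctly (and there you do apply Lemma~\ref{l:wo2} in the right direction). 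Delete the false step and the proof coincides with the paper's.

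Two smaller points. Before applying Lemma~\ref{l:wo2} to an arbitrary $F$ with $\F_E\subseteq\F_F$ you should rule out $|F|=1$; the paper does this by Lemma~\ref{l:wo1}, using that $|E|\ge 2$ since $A_E$ is finite. And in the converse, a test set $H$ with $A_H=\{2,p,q\}$, $\Pi_H=\emptyset$ and $\alpha_H=\alpha_E$ on $\{p,q\}$ exists only when $\alpha_E(p),\alpha_E(q)\ne 0$, i.e.\ $p,q\notin\Pi_E$, which is not yet known at that stage; if some odd prime $r\in A_E$ lies in $\Pi_E$, one can instead note $\F_E\subseteq\F_{\{r,2r\}}\in\mathfrak F\setminus(\mathfrak F'\cup\{\F_\infty\})$ with $A_{\{r,2r\}}=\{2,r\}\ne A_E$, contradicting maximality. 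The paper's own write-up glosses over the same point, so this is a shared, easily repaired wrinkle rather than a defect peculiar to your argument.
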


\begin{proof} To prove the ``only if'' part, assume that $\F_E\in\mathfrak{F}''$. By Lemma~\ref{l:2max}, there is an odd prime number $p\in A_E$. If  $A_E=\{2,p\}$, then $p\in\Pi_E$ by Lemma~\ref{l:max}, and  condition (1) is satisfied. So, we assume that $\{2,p\}\ne A_E$ and find an odd prime number $q\in A_E\setminus\{2,p\}$. By Lemma~\ref{l:realization},  there is a number $x\in\IN$ such that for the set $F=\{x,pq,2pq\}$ we have $A_F=\{2,p,q\}$, $\Pi_F=\emptyset$, $\alpha_F(p)=\alpha_E(p)$ and $\alpha_F(q)=\alpha_E(q)$. Then $\F_E\subseteq \F_F$ by Lemma~\ref{l:wo2}, and $\F_F\in\mathfrak{F}\setminus(\mathfrak{F}'\cup\{\F_\infty\})$ by Lemma~\ref{l:max}. Now the maximality of the filter $\F_E$ implies that $\F_E=\F_F$ and hence $A_E=A_F=\{2,p,q\}$ and $\Pi_E\subseteq \Pi_F\cup\{2\}=\{2\}$, see Lemma~\ref{l:wo2}.
\smallskip

To prove the ``if'' part, we consider two cases. First we assume that $A_E=\{2,p\}$ and $p\in \Pi_E$ for some odd prime number $p$.	By Lemmas~\ref{l:2max}  and \ref{l:max}, $\F_{E}\in \mathfrak{F}\setminus(\{\F_\infty\}\cup \mathfrak{F}')$. To prove that $\F_E$ is a maximal element of 	$\mathfrak{F}\setminus(\{\F_\infty\}\cup \mathfrak{F}')$, take any finite set $F\subseteq\IZ^\bullet$ such that $\F_E\subseteq\F_F\in\mathfrak{F}\setminus(\{\F_\infty\}\cup \mathfrak{F}')$. Lemma~\ref{l:wo1} implies that $\min\{|E|,|F|\}\ge 2$ and then by Lemmas~\ref{l:wo2} and \ref{l:max}, we have $A_F=\{2,p\}$, $\Pi_F\setminus\{2\}\subseteq \{p\}$ and
	$\alpha_E{\restriction}A_F\setminus\{p\}=\alpha_F{\restriction}A_F\setminus\{p\}$. Now notice that $p \in \Pi_F$ since otherwise $\F_F\in\mathfrak{F}'$ by Lemma~\ref{l:max}. By using again Lemma~\ref{l:wo2}  we get $\F_F=\F_E$ which means that $\F_E\in\mathfrak{F}''$. 		
	
	Now assume that there are two distinct odd prime numbers $p,q$ such that $A_E=\{2,p,q\}$ and $\Pi_E\subseteq\{2\}$. By Lemmas~\ref{l:2max}  and \ref{l:max}, $\F_{E}\in \mathfrak{F}\setminus(\{\F_\infty\}\cup \mathfrak{F}')$. To prove that $\F_E$ is a maximal element of  $\mathfrak{F}\setminus(\{\F_\infty\}\cup \mathfrak{F}')$, take any finite set $F\subseteq\IZ^\bullet$ such that $\F_E\subseteq \F_F\in\mathfrak{F}\setminus(\{\F_\infty\}\cup \mathfrak{F}')$. Lemma~\ref{l:wo2} implies that $A_F\subseteq \{2,p,q\}$, $\Pi_F\subseteq \{2\}$ and
	$\alpha_E{\restriction}A_F\setminus\Pi_E=\alpha_F{\restriction}A_F\setminus\Pi_E$. Taking into account that $\F_F\notin\mathfrak F'\cup\{\F_\infty\}$ and $\Pi_F\subseteq\{2\}$, we can apply Lemmas~\ref{l:max}, \ref{l:2max} and conclude that $A_F=\{2,p,q\}$. We therefore know that $A_F=A_E$, $\Pi_E\cup\{2\}=\Pi_F\cup\{2\}$ and $\alpha_F{\restriction}A_E\setminus\Pi_F=\alpha_E{\restriction}A_E\setminus\Pi_F$. By Lemma~\ref{l:wo2}, $\F_E=\F_F$ and hence  $\F_E\in\mathfrak{F}''$.
\end{proof}

\begin{lemma}\label{l:p2p-fix} For any homeomorphism $h$ of the Kirch space and any odd prime number $p$ we have $$\tilde{h}(\F_{\{p,2p\}})=\F_{\{p,2p\}}.$$
\end{lemma}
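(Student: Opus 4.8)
The plan is to apply Proposition~\ref{p:iso}: the induced map $\tilde h$ is an order automorphism of the superconnecting poset $\mathfrak F$, hence fixes the top element $\F_\infty$ and therefore restricts to a bijection of $\mathfrak F'$ (the set of maximal elements of $\mathfrak F\setminus\{\F_\infty\}$), and it preserves every meet that exists in $\mathfrak F$. By the description of $\mathfrak F'$ obtained above, each element of $\mathfrak F'$ has the form $\F_{\{a,p,2p\}}$ for a \emph{unique} odd prime $p$ and a unique $a\in\{1,\dots,p-1\}$; I group these coatoms into the blocks $B_p=\{\F_{\{a,p,2p\}}:1\le a\le p-1\}$, so that $|B_p|=p-1$ and the family $\{B_p:p\in\Pi\setminus\{2\}\}$ partitions $\mathfrak F'$. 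Using Lemma~\ref{l:wo2} I would first check that $\F_{\{p,2p\}}$ is exactly the infimum $\inf_{\mathfrak F}B_p$: a filter $\F_X$ lies below every member of $B_p$ iff $\{2,p\}\subseteq A_X$ and $p\in\Pi_X$ (no single residue can satisfy $\alpha_X(p)=a$ for all $a\in\{1,\dots,p-1\}$), and among the filters with this property $\F_{\{p,2p\}}$ is the largest. Granting that $\tilde h$ maps each $B_p$ onto itself, we then get $\tilde h(\F_{\{p,2p\}})=\tilde h(\inf B_p)=\inf\tilde h(B_p)=\inf B_p=\F_{\{p,2p\}}$, which is the assertion of the lemma.

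So the real work is to show that $\tilde h$ preserves the block partition of $\mathfrak F'$. For two distinct coatoms $\F_G,\F_{G'}\in\mathfrak F'$ I would analyse their meet $\F_G\wedge\F_{G'}$ in $\mathfrak F$: a routine computation with Lemma~\ref{l:wo2} shows that this meet exists and equals $\F_{\{p,2p\}}$ when $A_G=A_{G'}=\{2,p\}$ (here $\alpha_G(p)\ne\alpha_{G'}(p)$ since $\F_G\ne\F_{G'}$), and equals the filter $\F_E$ with $A_E=\{2,p,q\}$, $\Pi_E=\emptyset$, $\alpha_E(p)=\alpha_G(p)$, $\alpha_E(q)=\alpha_{G'}(q)$ (a member of $\mathfrak F''$ of the second kind described in Lemma~\ref{efbis}) when $A_G=\{2,p\}\ne\{2,q\}=A_{G'}$. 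Next, again via Lemma~\ref{l:wo2} and the description of $\mathfrak F'$, I would count the coatoms lying above these meets: $\{\F_H\in\mathfrak F':\F_{\{p,2p\}}\subseteq\F_H\}=B_p$, of size $p-1$, whereas $\{\F_H\in\mathfrak F':\F_E\subseteq\F_H\}=\{\F_G,\F_{G'}\}$, of size $2$. Hence, for $\F_G\in B_p$, the purely order-theoretic quantity
$$1+\max_{\F_{G'}\in\mathfrak F'\setminus\{\F_G\}}\bigl|\{\F_H\in\mathfrak F':\F_G\wedge\F_{G'}\subseteq\F_H\}\bigr|$$
equals $p$: the maximum is realised by any $\F_{G'}\in B_p\setminus\{\F_G\}$ (a nonempty set, since $p\ge 3$), yielding $p-1$, while every other partner yields only $2\le p-1$. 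Because $\tilde h$ preserves meets, the relation $\subseteq$, and the set $\mathfrak F'$, it preserves this quantity, so $\tilde h(B_p)=B_p$ for every odd prime $p$.

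The delicate point — and the step I expect to be the main obstacle — is the prime $p=3$, where $|B_3|=2$ agrees with the count ``two coatoms above the meet of two coatoms from different blocks'', so one cannot hope to single out $\F_{\{3,6\}}$ just by counting the coatoms directly above it. Taking the \emph{maximum} over all partner coatoms $\F_{G'}$ is precisely what fixes this: for a coatom lying in $B_3$ every partner (either the remaining element of $B_3$, or a coatom in some $B_q$ with $q\ne 3$) yields exactly $2$ coatoms above the meet, whereas for a coatom in $B_p$ with $p\ge 5$ a same-block partner already yields $p-1\ge 4$. Thus the invariant above separates the blocks for all odd primes, $3$ included, and the lemma follows.
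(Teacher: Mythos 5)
Your strategy is recognizably close in spirit to the paper's: the paper also works with the second layer $\mathfrak F''$ and counts, for each of its elements, the coatoms of $\mathfrak F'$ above it ($p-1$ many above $\F_{\{p,2p\}}$, exactly two above each $\F_{\{x,pq,2pq\}}$), which pins down $\F_{\{p,2p\}}$ for $p\ge 5$, and it handles $p=3$ by the disjointness of ${\uparrow}\F_{\{3,6\}}\cap\mathfrak F'$ from $\bigcup_{p\ne 3}{\uparrow}\F_{\{p,2p\}}$. The essential difference is that the paper never appeals to meets or infima in $\mathfrak F$, whereas your argument rests on the claims that the meets $\F_G\wedge\F_{G'}$ exist and are the filters you name, and that $\F_{\{p,2p\}}=\inf B_p$.

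That is where there is a genuine gap. Lemma~\ref{l:wo2} only compares filters $\F_E,\F_F$ with $|E|,|F|\ge 2$, but the poset $\mathfrak F$ also contains the singleton filters $\F_{\{x\}}$, and these really do occur among the common lower bounds of your pairs of coatoms. Take, for instance, $\F_G=\F_{\{3,5,10\}}\in B_5$ and $\F_{G'}=\F_{\{1,3,6\}}\in B_3$: since $3\in G\cap G'$, monotonicity gives $\F_{\{3\}}\subseteq\F_G$ and $\F_{\{3\}}\subseteq\F_{G'}$, so your candidate meet, a filter $\F_E$ with $E=\{y,15,30\}$, $y\equiv 3\pmod 5$, $y\equiv 1\pmod 3$, $y$ odd (e.g.\ $y=13$), can only be the greatest lower bound if $\F_{\{3\}}\subseteq\F_E$; without settling this, the meet is not identified and your invariant is not even well defined (nothing guarantees $\mathfrak F$ has binary meets). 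This inclusion is outside the scope of Lemma~\ref{l:wo2}, and the paper's lemma that does govern singletons, Lemma~\ref{l:wo1}, would deny it outright, since $3\notin E$. A direct check with Lemma~\ref{l:Kirch} shows the inclusion does in fact hold (for square-free $d$ coprime with $3$, the set $\bigcap_{r\in L}r\IZ^\bullet\cap(\{0,1\}+3\IZ)\cap(\{0,3\}+5\IZ)$ with $L=\Pi_d\setminus\{2,5\}$ belongs to $\F_E$ and is contained in $\overline{3+d\IZ}$), so your meet formula can be rescued, but only by an analysis of inclusions $\F_{\{x\}}\subseteq\F_E$ that appears nowhere in the paper and is in outright tension with the stated Lemma~\ref{l:wo1}; the same caveat applies to your verification that $\F_{\{p,2p\}}=\inf B_p$. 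So the step you call ``routine'' is in fact the crux. The cleanest repair is to drop meets altogether and argue as the paper does: replace $\F_G\wedge\F_{G'}$ by the elements of $\mathfrak F''$ lying below both coatoms and count their up-sets in $\mathfrak F'$ via Lemmas~\ref{l:wo2}, \ref{l:max} and \ref{efbis}, with the separate up-set disjointness argument for the exceptional prime $p=3$.
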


\begin{proof} By Proposition~\ref{p:iso}, the homeomorphism $h$ induces an order isomorphism $\tilde h$ of the superconnecting poset $\mathfrak F$ on the Kirch space. Then $\tilde h[\mathfrak F']=\mathfrak F'$ and $\tilde h[\mathfrak F'']=\mathfrak F''$.

By Lemmas~\ref{efbis} and \ref{l:realization}, $\mathfrak F''=\mathfrak F''_2\cup\mathfrak F''_3$ where
$$
\begin{aligned}
\mathfrak F''_2&=\{\F_{\{p,2p\}}:p\in\Pi\setminus\{2\}\}\quad\mbox{and}\\
\mathfrak F''_3&=\{\F_{\{x,pq,2pq\}}:p,q\in\Pi\setminus\{3\},\;x\in\{0,\dots,pq-1\}\setminus(p\IZ\cup q\IZ)\}.
\end{aligned}
$$
By Lemmas~\ref{l:wo2} and \ref{l:max}, for every filter $\F_{\{p,2p\}}\in\mathfrak F''_2$ the set
${\uparrow}\F_{\{p,2p\}}=\{\F\in \mathfrak F':\F_{\{p,2p\}}\subset\F_E\}$ coincides with the set $\{\F_{\{a,p,2p\}}:a\in\{1,\dots,p-1\}\}$ and hence has cardinality $p-1$.

On the other hand, for any filter $\F_{\{x,pq,2pq\}}\in\mathfrak F''_3$, the set ${\uparrow}\F_{\{x,pq,2pq\}}=\{\F\in \mathfrak F':\F_{\{x,pq,2pq\}}\subset\F\}$ coincides with the doubleton $\{\F_{\{x,p,2p\}},\F_{\{x,q,2q\}}\}$.

These order properties uniquely determine the filters $\F_{\{p,2p\}}$ for $p\in\Pi\setminus\{3\}$ and ensure that $\tilde h(\F_{\{p,2p\}})=\F_{\{p,2p\}}$ for every $p\in\Pi\setminus\{3\}$.

Next, observe that $\F_{\{3,6\}}$ is a unique element $\F$ of $\mathfrak F''$ such that ${\uparrow}\F\cap\bigcup_{p\in\Pi\setminus\{3\}}{\uparrow}\F_{\{p,2p\}}=\emptyset$. This uniqueness order property of $\F_{\{3,6\}}$ ensures that $\tilde h(\F_{\{3,6\}})=\F_{\{3,6\}}$.
\end{proof}

\begin{lemma}\label{lemmma} Let $E\subseteq \IZ^\bullet$ be a finite subset such that $A_E=\{2,p\}$ for some odd prime number $p\notin \Pi_E$. Then $A_{h[E]}=\{2,p\}$.
\end{lemma}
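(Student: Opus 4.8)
The plan is to combine Lemma~\ref{l:p2p-fix}, which tells us that $\tilde h$ fixes each filter $\F_{\{q,2q\}}$, with the order-theoretic descriptions of $\mathfrak F'$ and of the partial order of $\mathfrak F$ furnished by Lemmas~\ref{l:max} and~\ref{l:wo2}. First I would record two elementary observations about $E$: since $\Pi_E\subseteq A_E=\{2,p\}$ and $p\notin\Pi_E$ we have $\Pi_E\subseteq\{2\}$; and $|E|\ge 2$, because a singleton would force $A_E=\Pi$. By Lemma~\ref{l:max} this gives $\F_E\in\mathfrak F'$. Next I would check that $\F_{\{p,2p\}}\subseteq\F_E$ via Lemma~\ref{l:wo2} applied to the pair $\{p,2p\}$, $E$ (both of size at least $2$): here $A_{\{p,2p\}}=\{2,p\}$, $\Pi_{\{p,2p\}}=\{p\}$ and $\alpha_{\{p,2p\}}(2)=1$, so the three conditions of Lemma~\ref{l:wo2} read $A_E\subseteq\{2,p\}$ (true), $\Pi_E\setminus\{2\}\subseteq\{p\}$ (true, as $\Pi_E\subseteq\{2\}$), and the agreement of $\alpha_{\{p,2p\}}$ and $\alpha_E$ on $A_E\setminus\{p\}=\{2\}$ (true, both equal $1$ at $2$).

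Now I would transport this inclusion through the order isomorphism $\tilde h$ of $\mathfrak F$ (Proposition~\ref{p:iso}). Using $\tilde h(\F_E)=\F_{h[E]}$ together with $\tilde h(\F_{\{p,2p\}})=\F_{\{p,2p\}}$ from Lemma~\ref{l:p2p-fix}, the inclusion $\F_{\{p,2p\}}\subseteq\F_E$ yields $\F_{\{p,2p\}}\subseteq\F_{h[E]}$. Moreover $\F_{h[E]}=\tilde h(\F_E)\in\mathfrak F'$, because $\tilde h$ is an order automorphism of $\mathfrak F$ fixing its top element $\F_\infty$ and hence maps $\mathfrak F'$ onto $\mathfrak F'$. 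Since $|h[E]|=|E|\ge 2$, Lemma~\ref{l:max} provides an odd prime $q\notin\Pi_{h[E]}$ with $A_{h[E]}=\{2,q\}$. Finally, applying Lemma~\ref{l:wo2} to $\F_{\{p,2p\}}\subseteq\F_{h[E]}$ gives $A_{h[E]}\subseteq A_{\{p,2p\}}=\{2,p\}$, so $q\in\{2,p\}$ and therefore $q=p$ since $q$ is odd. Hence $A_{h[E]}=\{2,p\}$.

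The only point calling for a word of care is the equality $\tilde h(\F_E)=\F_{h[E]}$, which holds because $h$ is a homeomorphism: it sends closures to closures ($h[\overline U]=\overline{h[U]}$ for open $U$) and restricts to a bijection between the neighborhood families $\tau_x$ and $\tau_{h(x)}$, so the families defining $\F_E$ are carried exactly onto those defining $\F_{h[E]}$, using also that $h$ is injective so that $|h[E]|=|E|$. Apart from this routine verification I do not expect any genuine obstacle; the lemma is essentially a corollary of Lemma~\ref{l:p2p-fix} once the inclusion $\F_{\{p,2p\}}\subseteq\F_E$ is noticed.
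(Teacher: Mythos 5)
Your argument is correct and essentially reproduces the paper's proof: show $\F_E\in\mathfrak F'$ via Lemma~\ref{l:max}, obtain $\F_{\{p,2p\}}\subseteq\F_E$ from Lemma~\ref{l:wo2}, transport this through $\tilde h$ using Lemma~\ref{l:p2p-fix} and Proposition~\ref{p:iso}, and deduce $A_{h[E]}\subseteq\{2,p\}$ from Lemma~\ref{l:wo2} again. The only (immaterial) difference is the final step: the paper excludes $A_{h[E]}=\{2\}$ by invoking Lemma~\ref{l:2max}, whereas you invoke Lemma~\ref{l:max} for $\F_{h[E]}\in\mathfrak F'$ to write $A_{h[E]}=\{2,q\}$ and conclude $q=p$; both are fine, and your explicit checks that $|E|\ge 2$ and that $\tilde h(\F_E)=\F_{h[E]}$ are welcome details the paper leaves implicit.
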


\begin{proof} By Lemma \ref{l:max}, $\F_E\in\mathfrak{F'}$. Consider the doubleton $\{p,2p\}$ which has $A_{\{p,2p\}}=\{2,p\}$ and $\Pi_{\{p,2p\}}=\{p\}$. By Lemma \ref{l:wo2}, $\mathcal{F}_{\{p,2p\}}\subseteq\F_E$ and by Lemma \ref{l:p2p-fix}, $\mathcal{F}_{\{p,2p\}}=\tilde{h}(\mathcal{F}_{\{p,2p\}})=\mathcal{F}_{\{h(p),h(2p)\}}\subseteq\mathcal{F}_{h[E]}$. By Lemma \ref{l:wo2}, $A_{h[E]}\subseteq A_{\{p,2p\}}=\{2,p\}$. By Lemma 8, $A_{h[E]}=\{2,p\}$.
\end{proof}

\begin{definition}  A homeomorphism $h$ of the Kirch space $(\IZ^\bullet,\tau_K)$ is called {\em positive} if $h(1)>0$.
\end{definition}

\begin{lemma}\label{l:2fix} Every positive homeomorphism $h$ of the Kirch space has $h(x)=x$ for any $x\in\{\pm2^n, n\in\omega\}$
\end{lemma}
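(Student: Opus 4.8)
The plan is to exploit the structure established in Lemma~\ref{l:p2p-fix} together with the description of $A_E$ from Lemmas~\ref{l:2max} and \ref{lemmma}, proceeding by induction on $n$ and using the superconnecting poset to pin down individual points.

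First I would handle the base case $n=0$, i.e. show $h(1)=1$ and $h(-1)=-1$. The key observation is that the filters $\F_{\{x\}}$ for singletons $x$ are exactly the minimal elements of $\mathfrak F$ (by Lemma~\ref{l:wo1}), so $\tilde h$ permutes them; hence $h$ acts on $\IZ^\bullet$ itself, which is already known, but we need to identify $\pm1$ intrinsically. Here I would use that $A_{\{1,2\}}=\{2\}$, so $\F_{\{1,2\}}=\F_\infty$ by Lemma~\ref{l:2max}; more usefully, among two-element sets $E$ with $\F_E=\F_\infty$, Lemma~\ref{l:2max}(3) tells us $E$ is one of $\{2^n,2^{n+1}\}$, $\{-2^n,-2^{n+1}\}$, or $\{-2^n,2^n\}$. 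The positivity hypothesis $h(1)>0$ should then force $h(1)=1$: one identifies $1$ as the unique positive integer $z$ such that for every odd prime $p$ and every $a\in\{1,\dots,p-1\}$ one has $z\in\overline{\{a,p,2p\}}$-type membership, or more directly, one uses that $\tilde h$ fixes $\F_{\{p,2p\}}$ for all odd $p$ (Lemma~\ref{l:p2p-fix}) together with the order-theoretic characterization of which singletons lie below which elements of $\mathfrak F''$. Concretely, $\F_{\{z\}}\subseteq\F_{\{p,2p\}}$ would need the membership $\{z\}\subseteq\{p,2p\}$ only in the singleton-vs-singleton case, so instead I would compare $\F_{\{z\}}$ against filters $\F_{\{a,p,2p\}}\in\mathfrak F'$ via Lemma~\ref{l:wo1}'s analogue and observe that $z=1$ is characterized by lying below $\F_{\{1,p,2p\}}$ for every odd prime $p$ while no other positive integer does this simultaneously for all $p$; since $\tilde h$ fixes each $\F_{\{p,2p\}}$ and must permute $\{\F_{\{1,p,2p\}},\dots,\F_{\{p-1,p,2p\}}\}$ among themselves, a counting/divisibility argument across varying $p$ isolates $z=1$. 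Then $h(-1)=-1$ follows because $\{-1\}$ is the unique singleton $\{w\}$ with $\F_{\{w,1\}}=\F_\infty$ (using Lemma~\ref{l:2max}(3): the only two-element sets containing $1$ with $A_E=\{2\}$ are $\{1,2\}$, $\{1,-1\}$—wait, $\{1,2\}$ has $A=\{2\}$, and $\{-1,1\}$ has $A=\{2\}$; so one further distinguishes using a third point or the sign constraint already forces it).

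Next, for the inductive step, assume $h(2^k)=2^k$ and $h(-2^k)=-2^k$ for all $k<n$ (and $k=n$ partially). The engine is Lemma~\ref{l:2max}(3) again: $\{2^{n-1},2^n\}$ is one of the listed sets with $\F_E=\F_\infty$, and $\tilde h$ maps it to another such set $\{h(2^{n-1}),h(2^n)\}=\{2^{n-1},h(2^n)\}$ which must again be of the form $\{2^m,2^{m+1}\}$, $\{-2^m,-2^{m+1}\}$, or $\{-2^m,2^m\}$; since it contains $2^{n-1}$, the only options are $\{2^{n-1},2^n\}$ (forcing $h(2^n)=2^n$) or $\{2^{n-2},2^{n-1}\}$ (forcing $h(2^n)=2^{n-2}$, impossible by injectivity since $h(2^{n-2})=2^{n-2}$) or $\{-2^{n-1},2^{n-1}\}$ (forcing $h(2^n)=-2^{n-1}=h(-2^{n-1})$, again impossible). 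Hence $h(2^n)=2^n$. The same argument with $\{-2^{n-1},-2^n\}$ and $\{-2^{n-1},2^{n-1}\}$ gives $h(-2^n)=-2^n$, where in the second we use $\F_{\{-2^{n-1},2^{n-1}\}}=\F_\infty$ and that its image $\{-2^{n-1},h(2^{n-1})\}=\{-2^{n-1},2^{n-1}\}$ is automatically of the right form.

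The main obstacle is the base case: extracting $h(1)=1$ from the order structure alone. All the inductive machinery only needs injectivity plus Lemma~\ref{l:2max}(3), but to anchor the induction one must genuinely use Lemma~\ref{l:p2p-fix} (the fixedness of every $\F_{\{p,2p\}}$) to recognize $1$ among all positive integers, and this requires carefully matching, for several distinct odd primes $p$, which singleton filters sit below which of the $p-1$ maximal filters in ${\uparrow}\F_{\{p,2p\}}$. I expect this to boil down to: $z$ is the unique positive integer congruent to $1$ modulo enough distinct primes to be forced equal to $1$—but since a single congruence class mod $p$ is infinite, one needs the simultaneous constraint over all odd primes together with positivity, and the subtlety is ensuring $\tilde h$ does not permute the $\F_{\{a,p,2p\}}$ nontrivially in a way that would allow $h(1)$ to be some other small positive value. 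Resolving this cleanly—likely by showing $\tilde h$ fixes $\F_{\{1,p,2p\}}$ for each odd $p$ using some further order-invariant distinguishing $a=1$ (for instance, $\F_{\{1,p,2p\}}$ relates to $\F_\infty$-adjacent structure differently than $\F_{\{a,p,2p\}}$ for $a\neq1$), and then invoking Lemma~\ref{l:wo1} plus Dirichlet or the Chinese Remainder Theorem to conclude $h(1)=1$—is the crux of the argument.
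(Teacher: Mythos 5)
Your inductive step is fine and is essentially the paper's argument in disguise: since $\tilde h(\F_E)=\F_{h[E]}$ and $\tilde h$ fixes $\F_\infty$, Lemma~\ref{l:2max}(3) shows that $h$ permutes the doubletons $\{2^n,2^{n+1}\},\{-2^n,-2^{n+1}\},\{-2^n,2^n\}$, and injectivity then propagates fixedness up the powers of $2$. But the base case is a genuine gap, and you say so yourself (``Resolving this cleanly \dots is the crux of the argument''). Your proposed route --- recognizing $1$ by showing that $\tilde h$ fixes each $\F_{\{1,p,2p\}}$ and then running a counting/divisibility argument over varying odd primes $p$ --- is never carried out, and it is also structurally backwards relative to the paper: in the paper the statement $\tilde h(\F_{\{1,p,2p\}})=\F_{\{1,p,2p\}}$ is Lemma~\ref{l:12fix}, which is \emph{deduced from} Lemma~\ref{l:2fix}; Lemma~\ref{l:p2p-fix} alone only tells you that $\tilde h$ permutes the $p-1$ filters $\F_{\{a,p,2p\}}$, $a\in\{1,\dots,p-1\}$, and gives no handle on which one is $a=1$. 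Likewise your treatment of $-1$ is left hanging: as you noticed, both $\{1,2\}$ and $\{-1,1\}$ have $A_E=\{2\}$, so knowing $h(1)=1$ and applying Lemma~\ref{l:2max}(3) to $\{-1,1\}$ only yields $h(-1)\in\{-1,2\}$, and excluding $h(-1)=2$ needs an extra argument you do not supply.

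The missing idea is much simpler and stays entirely inside the machinery you already invoked. The set $V_2=\{\pm 2^n:n\in\w\}$ is the union of all doubletons $E$ with $\F_E=\F_\infty$, so $h[V_2]=V_2$, and $h{\restriction}V_2$ is an automorphism of the graph $\Gamma_2$ whose edge set is exactly the list in Lemma~\ref{l:2max}(3). In $\Gamma_2$ the vertices $1$ and $-1$ have degree $2$ (their neighbours are $\{2,-1\}$ and $\{-2,1\}$ respectively), while every other vertex $\pm 2^n$, $n\ge 1$, has degree $3$. Hence $h(\{1,-1\})=\{1,-1\}$, positivity gives $h(1)=1$ and therefore $h(-1)=-1$; then $h$ fixes the neighbour set of $1$, so $h(2)=2$, and your induction (or an induction along the graph) finishes the proof. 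This degree count is precisely how the paper anchors the argument; without it, or some replacement for it, your proof does not get off the ground.
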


\begin{proof} Consider the graph $\Gamma_2=(V_2,\mathcal E)$ with set of vertices $V_2=\{\pm2^n:n\in\omega\}$ and set of edges $\mathcal E=\big\{\{2^n,2^{n+1}\},\{-2^n,-2^{n+1}\},\{-2^n,2^n\}:n\in\omega\big\}$. 

Observe that $1$ and $-1$ are the unique vertices of $\Gamma_2$ that have order 2. Any other vertices have order 3. This ensures that $h(1)=\pm1$. The positivity of $h$ implies that $h(1)=1$. Then $h(-1)=-1$, $h(2)=2$. Hence $h(\pm2^n)=\pm2^n$ for all $n\in\w$.
\end{proof}

Lemmas~\ref{l:2fix} and \ref{l:p2p-fix} imply

\begin{lemma}\label{l:12fix} For any positive homeomorphism $h$ of the Kirch space and any odd prime number $p$ we have $$\tilde{h}(\F_{\{1,p,2p\}})=\F_{\{1,p,2p\}}\quad\mbox{and}\quad \tilde{h}(\F_{\{2,p,2p\}})=\F_{\{2,p,2p\}}.$$
\end{lemma}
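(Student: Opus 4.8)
The plan is to compute the two filters $\tilde h(\F_{\{1,p,2p\}})=\F_{h[\{1,p,2p\}]}$ and $\tilde h(\F_{\{2,p,2p\}})=\F_{h[\{2,p,2p\}]}$ explicitly and then recognize them via the arithmetic criterion of Lemma~\ref{l:wo2}. Since $h$ is positive, Lemma~\ref{l:2fix} gives $h(1)=1$ and $h(2)=2$, so $h[\{1,p,2p\}]=\{1,h(p),h(2p)\}$ and $h[\{2,p,2p\}]=\{2,h(p),h(2p)\}$; hence it remains to prove that $\F_{\{1,h(p),h(2p)\}}=\F_{\{1,p,2p\}}$ and $\F_{\{2,h(p),h(2p)\}}=\F_{\{2,p,2p\}}$.

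First I would record that each set $E\in\{\{1,p,2p\},\{2,p,2p\}\}$ has $A_E=\{2,p\}$ and $\Pi_E=\emptyset$: the inclusion $A_E\subseteq\{2,p\}$ follows from $\{p,2p\}\subseteq E$ together with Lemma~\ref{2ae}, while $2,p\in A_E$ is immediate. In particular $p\notin\Pi_E$, so Lemma~\ref{lemmma} (which is where Lemma~\ref{l:p2p-fix} enters the argument) applies and yields $A_{\{1,h(p),h(2p)\}}=A_{\{2,h(p),h(2p)\}}=\{2,p\}$. Next I would pin down the functions $\alpha$ of these image sets: as $p\ge 3$, neither $1$ nor $2$ lies in $p\IZ$, so from $1\in\{1,h(p),h(2p)\}\subseteq\{0,\alpha_{\{1,h(p),h(2p)\}}(p)\}+p\IZ$ we obtain $\alpha_{\{1,h(p),h(2p)\}}(p)=1$, and analogously $\alpha_{\{2,h(p),h(2p)\}}(p)=2$. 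Together with the normalization $\alpha_E(2)=1$ valid for all $E$, these are exactly the values of $\alpha_{\{1,p,2p\}}$ and $\alpha_{\{2,p,2p\}}$ on the set $\{2,p\}$. Feeding all this into Lemma~\ref{l:wo2}, used in both directions, then forces the two desired equalities of filters, completing the proof.

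The substantive work is already carried by Lemmas~\ref{l:2fix}, \ref{l:p2p-fix} and \ref{lemmma}, so I do not anticipate a real obstacle here --- only one bookkeeping subtlety. For the set $\{2,h(p),h(2p)\}$ I cannot a priori exclude that $2$ divides both $h(p)$ and $h(2p)$, so its set $\Pi$ might be $\{2\}$ rather than $\emptyset$; but this is harmless in Lemma~\ref{l:wo2}, since the only prime that could then be deleted from the relevant index set is $2$, on which $\alpha$ equals $1$ on both sides regardless, so the restrictions of $\alpha$ still agree. Should this annoy, one can instead argue directly from Lemma~\ref{l:p2p-fix}: the fixed filter $\F_{\{p,2p\}}$ sits below both $\F_{\{1,p,2p\}}$ and $\F_{\{2,p,2p\}}$, hence their images lie in $\{\F_{\{a,p,2p\}}:a\in\{1,\dots,p-1\}\}$, and the equalities $h(1)=1$, $h(2)=2$ then single out $a=1$ and $a=2$.
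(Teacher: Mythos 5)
Your proposal is correct and follows essentially the same route as the paper, which states this lemma without proof as a direct consequence of Lemmas~\ref{l:2fix} and \ref{l:p2p-fix}; you simply make the implication explicit by computing $A_{h[E]}$ via Lemma~\ref{lemmma} and matching the $\alpha$-data through Lemma~\ref{l:wo2} (your closing alternative via ${\uparrow}\F_{\{p,2p\}}$ is exactly the paper's intended one-line argument). The parity subtlety you flag for $\{2,h(p),h(2p)\}$ is handled correctly and does not affect the conclusion.
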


\begin{lemma}\label{ppix} For an integer number $x\in\IZ^\bullet\setminus\{-2,-1,1,2\}$ and an odd prime $p$, the following conditions are equivalent:
\begin{enumerate}
\item $p\in\Pi_x$;
\item $\F_{\{1,x\}}\subseteq \F_{\{1,p,2p\}}$ and $\F_{\{2,x\}}\subseteq \F_{\{2,p,2p\}}$.
\end{enumerate}
\end{lemma}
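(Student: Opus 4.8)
The plan is to reduce each of the two inclusions appearing in condition~(2) to a divisibility statement, using the arithmetic description of the order on $\mathfrak F$ provided by Lemma~\ref{l:wo2}, and then to extract condition~(1) by an elementary manipulation. First I would record the invariants $A_E$, $\Pi_E$ and $\alpha_E$ for the four finite sets involved, namely $\{1,x\}$, $\{2,x\}$, $\{1,p,2p\}$ and $\{2,p,2p\}$. Here the assumption $x\notin\{-2,-1,1,2\}$ is used only to guarantee that $\{1,x\}$ and $\{2,x\}$ are two-element sets, so that Lemma~\ref{l:wo2} is applicable.

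For $\{1,x\}$, Lemma~\ref{2ae} gives $A_{\{1,x\}}=\Pi_x\cup\Pi_{x-1}$ and $\Pi_{\{1,x\}}=\emptyset$, and since exactly one of $x,x-1$ is even we have $2\in A_{\{1,x\}}$ in every case. If $q\in A_{\{1,x\}}$ is an odd prime, then $q\mid x$ or $q\mid x-1$, and in both situations the only $\alpha\in\{0,\dots,q-1\}$ with $\{1,x\}\subseteq\{0,\alpha\}+q\IZ$ is $\alpha=1$; together with the convention $\alpha_{\{1,x\}}(2)=1$ this shows $\alpha_{\{1,x\}}\equiv 1$ on $A_{\{1,x\}}$. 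Analogously, $A_{\{2,x\}}=\{2\}\cup\Pi_x\cup\Pi_{x-2}$, $\Pi_{\{2,x\}}\subseteq\{2\}$, and for every odd prime $q\in A_{\{2,x\}}$ one has $\alpha_{\{2,x\}}(q)=2$, since whether $q\mid x$ or $q\mid x-2$ the congruence $2\not\equiv0\pmod q$ forces $\alpha=2$. Finally a short direct check yields $A_{\{1,p,2p\}}=A_{\{2,p,2p\}}=\{2,p\}$, $\Pi_{\{1,p,2p\}}=\Pi_{\{2,p,2p\}}=\emptyset$, $\alpha_{\{1,p,2p\}}(p)=1$ and $\alpha_{\{2,p,2p\}}(p)=2$.

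Substituting these data into Lemma~\ref{l:wo2} collapses the inclusions to divisibility conditions. For $\F_{\{1,x\}}\subseteq\F_{\{1,p,2p\}}$ the clause on $\Pi$ is automatic, the clause $\{2,p\}\subseteq A_{\{1,x\}}$ is equivalent to $p\in\Pi_x\cup\Pi_{x-1}$ (because $2\in A_{\{1,x\}}$ always), and the clause on $\alpha$ holds for free, since $\alpha_{\{1,x\}}$ and $\alpha_{\{1,p,2p\}}$ agree (both equal to $1$) at $2$ and at $p$ whenever $p\in A_{\{1,x\}}$. Hence $\F_{\{1,x\}}\subseteq\F_{\{1,p,2p\}}$ iff $p\in\Pi_x\cup\Pi_{x-1}$, and the same argument with the residue $2$ in place of $1$ gives $\F_{\{2,x\}}\subseteq\F_{\{2,p,2p\}}$ iff $p\in\Pi_x\cup\Pi_{x-2}$. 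Consequently condition~(2) is equivalent to the conjunction ``$p\mid x$ or $p\mid x-1$'' and ``$p\mid x$ or $p\mid x-2$''.

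To finish, I would observe that this conjunction holds if and only if $p\mid x$. One direction is trivial; for the other, if $p\nmid x$ then $p\mid x-1$ and $p\mid x-2$, whence $p\mid(x-1)-(x-2)=1$, which is impossible, so $p\mid x$, i.e.\ $p\in\Pi_x$. I expect the main (and only mildly technical) obstacle to be the verification that the $\alpha$-compatibility clause in Lemma~\ref{l:wo2} imposes nothing beyond divisibility; this is exactly the point of working with the triples $\{1,p,2p\}$ and $\{2,p,2p\}$, whose residue patterns modulo $p$ are $\{0,1\}$ and $\{0,2\}$ respectively, which match the forced values $\alpha_{\{1,x\}}(p)=1$ and $\alpha_{\{2,x\}}(p)=2$.
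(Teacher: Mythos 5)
Your proof is correct and follows essentially the same route as the paper: both verify the hypotheses of Lemma~\ref{l:wo2} (with the $A$-sets computed via Lemma~\ref{2ae}) for the forward implication, and for the converse extract $\{2,p\}\subseteq A_{\{1,x\}}=\Pi_x\cup\Pi_{x-1}$ and $\{2,p\}\subseteq A_{\{2,x\}}=\{2\}\cup\Pi_x\cup\Pi_{x-2}$, concluding $p\in\Pi_x$ since an odd prime cannot divide both $x-1$ and $x-2$. Your extra computation of the $\alpha$-values to get full ``iff'' characterizations of the two inclusions is a harmless refinement of the same argument.
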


\begin{proof} If $p\in\Pi_x$, then $A_{\{1,p,2p\}}=\{2,p\}\subseteq A_{\{1,x\}}$, $\Pi_{\{1,x\}}\cup\{2\}=\{2\}=\Pi_{\{1,p,2p\}}\cup\{2\}$  and $\alpha_{\{1,x\}}(p)=1=\alpha_{\{1,p,2p\}}(p)$. By Lemma~\ref{l:wo2}, $\F_{\{1,x\}}\subseteq\F_{\{1,p,2p\}}$. By analogy we can prove that $\F_{\{2,x\}}\subseteq \F_{\{2,p,2p\}}$.

Conversely, assume $\F_{\{1,x\}}\subseteq \F_{\{1,p,2p\}}$ and $\F_{\{2,x\}}\subseteq \F_{\{2,p,2p\}}$. By Lemmas~\ref{l:wo2} and \ref{2ae}, we have
$$\{2,p\}=A_{\{1,p,2p\}}\subseteq A_{\{1,x\}}=\Pi_{x}\cup\Pi_{x-1}
\text{ and }\{2,p\}=A_{\{2,p,2p\}}\subseteq A_{\{2,x\}}=\{2\}\cup\Pi_x\cup\Pi_{x-2}$$
and hence $p\in (\Pi_x\cup\Pi_{x-1})\cap(\Pi_x \cup\Pi_{x-2})\setminus\{2\}\subseteq \Pi_x$.
\end{proof}


Proposition~\ref{p:iso} and Lemmas~\ref{l:2fix}, \ref{l:12fix}, \ref{ppix} imply

\begin{lemma}\label{Pix} For every homeomorphism $h$ of the Kirch space and any number $x\in\IN$ we have $$\Pi_x\cup\{2\}=\Pi_{h(x)}\cup\{2\}.$$
\end{lemma}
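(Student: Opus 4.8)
The statement to be proved is that for every homeomorphism $h$ of the Kirch space and every $x\in\IN$, we have $\Pi_x\cup\{2\}=\Pi_{h(x)}\cup\{2\}$. The idea is to reduce the claim to the already-established Lemmas~\ref{l:2fix}, \ref{l:12fix} and \ref{ppix} via the order isomorphism $\tilde h$ of Proposition~\ref{p:iso}, but one first has to deal with the fact that $h$ need not be positive. So the plan is: first handle the trivial small cases $x\in\{1,2\}$ directly, then reduce the general case to a positive homeomorphism, and finally run the characterization through $\tilde h$.

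First I would dispose of $x\in\{1,2\}$: since $\Pi_1=\emptyset$ and $\Pi_2=\{2\}$, the set $\Pi_x\cup\{2\}=\{2\}$ in both cases, and by Lemma~\ref{l:2fix} applied to the positive homeomorphism among $h$ and $j\circ h$ (where $j(x)=-x$), together with the observation that $j$ and composing with $j$ does not change $\Pi_{h(x)}\cup\{2\}$, we get $h(1)\in\{1,-1\}$ and $h(2)\in\{2,-2\}$, so $\Pi_{h(x)}\cup\{2\}=\{2\}$ as well. More importantly, for general $x\in\IN\setminus\{1,2\}$, replace $h$ by $h':=h$ if $h$ is positive and $h':=j\circ h$ otherwise; then $h'$ is a positive homeomorphism and $\Pi_{h(x)}=\Pi_{h'(x)}$ because $\Pi_{-n}=\Pi_n$ for all $n$. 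So it suffices to prove the lemma for positive $h$.

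Now assume $h$ is positive and fix $x\in\IN\setminus\{1,2\}$; note $x\notin\{-2,-1,1,2\}$, so Lemma~\ref{ppix} applies to $x$, and since $h(x)\in\IZ^\bullet\setminus\{-2,-1,1,2\}$ (because $h$ is injective and $h$ fixes $\pm1,\pm2$ by Lemma~\ref{l:2fix}), Lemma~\ref{ppix} applies to $h(x)$ as well. Take any odd prime $p$. By Lemma~\ref{ppix}, $p\in\Pi_x$ iff $\F_{\{1,x\}}\subseteq\F_{\{1,p,2p\}}$ and $\F_{\{2,x\}}\subseteq\F_{\{2,p,2p\}}$. Apply the order isomorphism $\tilde h$: since $h(1)=1$ and $h(2)=2$, we have $\tilde h(\F_{\{1,x\}})=\F_{\{1,h(x)\}}$ and $\tilde h(\F_{\{2,x\}})=\F_{\{2,h(x)\}}$, while Lemma~\ref{l:12fix} gives $\tilde h(\F_{\{1,p,2p\}})=\F_{\{1,p,2p\}}$ and $\tilde h(\F_{\{2,p,2p\}})=\F_{\{2,p,2p\}}$. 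Because $\tilde h$ preserves and reflects the inclusion order, the two inclusions for $x$ hold iff the corresponding inclusions for $h(x)$ hold, i.e.\ iff $\F_{\{1,h(x)\}}\subseteq\F_{\{1,p,2p\}}$ and $\F_{\{2,h(x)\}}\subseteq\F_{\{2,p,2p\}}$, which by Lemma~\ref{ppix} (now in the direction for $h(x)$) is equivalent to $p\in\Pi_{h(x)}$. Hence $\Pi_x\setminus\{2\}=\Pi_{h(x)}\setminus\{2\}$, and adding $2$ to both sides yields $\Pi_x\cup\{2\}=\Pi_{h(x)}\cup\{2\}$.

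The only genuinely delicate point is the reduction to the positive case, i.e.\ making sure that composing with $j$ is legitimate and harmless: one must check that $j$ is indeed a homeomorphism of $(\IZ^\bullet,\tau_K)$ (this is stated in the introduction), that $j\circ h$ is positive precisely when $h$ is not, and that $\Pi_{h(x)}=\Pi_{j(h(x))}$ so the conclusion for $h'$ transfers back to $h$ verbatim. Everything else is a routine transport of the biconditional in Lemma~\ref{ppix} across the order isomorphism $\tilde h$, using that $\tilde h$ fixes the three distinguished filters involved.
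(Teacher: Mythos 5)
Your proof is correct and follows essentially the same route as the paper, which derives Lemma~\ref{Pix} precisely from Proposition~\ref{p:iso} and Lemmas~\ref{l:2fix}, \ref{l:12fix}, \ref{ppix} without writing out the details. Your reduction to a positive homeomorphism via composition with $j$ (using $\Pi_{-n}=\Pi_n$) is exactly the step the paper leaves implicit, and your transport of the biconditional in Lemma~\ref{ppix} through $\tilde h$ is the intended argument.
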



For every prime number $p$ consider the set $$V_p=\{\pm2^{n-1}p^m:n,m\in\IN\}$$ of numbers $x\in\IN$ such that $p\in\Pi_x\subseteq \{2,p\}$. Lemmas~\ref{l:2fix} and \ref{Pix} imply that $h[V_p]=V_p$ for every homeomorphism $h$ of the Kirch space.

Consider the graph $\Gamma_p=(V_p,\mathcal E_p)$ on the set $V_p$ with the set of edges $$\mathcal E_p:=\big\{E\in[V_p]^2:A_E=\{2,p\}\big\}.$$

\begin{lemma}\label{l:graph} For every prime number $p$ and every homeomorphism $h$ of the Kirch space, the restriction of $h$ to $V_p$ is an isomorphism of the graph $\Gamma_p$.
\end{lemma}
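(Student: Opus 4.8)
The plan is to read off the edge relation of $\Gamma_p$, restricted to pairs of elements of $V_p$, as a purely order-theoretic property of the superconnecting poset $\mathfrak F$ --- one that the induced order isomorphism $\tilde h$ from Proposition~\ref{p:iso} automatically preserves. Since $h[V_p]=V_p$, the restriction $h{\restriction}V_p$ is a bijection of $V_p$; moreover $\tilde h(\F_E)=\F_{h[E]}$ for every finite $E\subseteq\IZ^\bullet$. So it suffices to show that for any two distinct $x,y\in V_p$ one has $\{x,y\}\in\mathcal E_p$ if and only if $\{h(x),h(y)\}\in\mathcal E_p$ (note that $h(x),h(y)$ are again distinct points of $V_p$).

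The heart of the matter is a poset characterization of $\mathcal E_p$. Fix distinct $x,y\in V_p$. Since $p\in\Pi_x\cap\Pi_y=\Pi_{\{x,y\}}$ and $\Pi_x\cup\Pi_y\subseteq\{2,p\}$, Lemma~\ref{2ae} gives $A_{\{x,y\}}=\Pi_x\cup\Pi_y\cup\Pi_{x-y}$, and this set always contains $p$. Assume first that $p$ is odd; I claim that $\{x,y\}\in\mathcal E_p$, i.e.\ $A_{\{x,y\}}=\{2,p\}$, if and only if $\F_{\{x,y\}}\in\mathfrak F''$. For the forward implication, $A_{\{x,y\}}=\{2,p\}$ together with the odd prime $p\in\Pi_{\{x,y\}}$ is exactly condition~(1) of Lemma~\ref{efbis}, so $\F_{\{x,y\}}\in\mathfrak F''$. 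Conversely, if $\F_{\{x,y\}}\in\mathfrak F''$ then one of the two conditions of Lemma~\ref{efbis} holds; condition~(2) would force $\Pi_{\{x,y\}}\subseteq\{2\}$, contradicting $p\in\Pi_{\{x,y\}}$, so condition~(1) holds and yields an odd prime $q$ with $A_{\{x,y\}}=\{2,q\}$; since $p\in A_{\{x,y\}}$ and $p\neq 2$ we get $q=p$, hence $\{x,y\}\in\mathcal E_p$. When $p=2$ the analogous statement is $\{x,y\}\in\mathcal E_2\iff A_{\{x,y\}}=\{2\}\iff\F_{\{x,y\}}=\F_\infty$, by Lemma~\ref{l:2max}.

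To conclude, recall that $\tilde h$ fixes the top element $\F_\infty$ of $\mathfrak F$, hence preserves $\mathfrak F\setminus\{\F_\infty\}$ and its set $\mathfrak F'$ of maximal elements, hence preserves $\mathfrak F\setminus(\mathfrak F'\cup\{\F_\infty\})$ and its set $\mathfrak F''$ of maximal elements. Combining this with the characterization above and with $\tilde h(\F_{\{x,y\}})=\F_{\{h(x),h(y)\}}$, for distinct $x,y\in V_p$ with $p$ odd we obtain
$$\{x,y\}\in\mathcal E_p\iff\F_{\{x,y\}}\in\mathfrak F''\iff\F_{\{h(x),h(y)\}}\in\mathfrak F''\iff\{h(x),h(y)\}\in\mathcal E_p,$$
the last equivalence being the characterization applied to the pair $\{h(x),h(y)\}\subseteq V_p$; the same chain with $\F_\infty$ in place of $\mathfrak F''$ handles $p=2$. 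Therefore $h{\restriction}V_p$ is an isomorphism of the graph $\Gamma_p$.

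I do not expect a genuine difficulty, since the real work is already done in Lemmas~\ref{l:2max}, \ref{l:max}, \ref{efbis} and \ref{l:wo2}. The one point demanding care is that $\mathfrak F''$ also contains filters arising from triples (the subfamily $\mathfrak F''_3$) and from the doubletons $\{p,2p\}$ with $p\in\Pi_E$; the equivalence ``$\{x,y\}$ is an edge $\iff\F_{\{x,y\}}\in\mathfrak F''$'' is valid only because restricting the pair to $V_p$ forces $p\in\Pi_{\{x,y\}}$, which kills condition~(2) of Lemma~\ref{efbis} and pins the odd prime in condition~(1) to be $p$ itself.
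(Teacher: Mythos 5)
Your proof is correct and follows essentially the same route as the paper: the edge relation of $\Gamma_p$ is recognized inside the poset $\mathfrak F$ via Lemma~\ref{efbis} (using that $p\in\Pi_{\{x,y\}}$ for pairs in $V_p$ rules out the second alternative and pins down the prime), and the invariance $h[V_p]=V_p$ together with $\tilde h[\mathfrak F'']=\mathfrak F''$ transfers the property to $\{h(x),h(y)\}$. Your explicit treatment of $p=2$ via $\F_\infty$ and Lemma~\ref{l:2max} is a small extra care the paper's argument glosses over, but the approach is the same.
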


\begin{proof} Let $E\in \mathcal{E}_p$. Since $p\in \Pi_E$, we can apply Lemma~\ref{efbis} and conclude that $\F_E\in\mathfrak{F}''$.  Using fact that $\tilde{h}$ is isomorphism of $\mathfrak{F}$ we get $\F_{h[E]}=\tilde{h}(\F_E)\in\mathfrak{F}''$. Since $h[E]\subseteq h[V_p]=V_p$, we obtain $p\in \Pi_{h[E]}$. Using Lemma~\ref{efbis} once more, we obtain that $A_{h[E]}=\{2,p\}$, which means that $h[E]\in \mathcal{E}_p$. By analogical reasoning we can prove that $h^{-1}[E]\in \mathcal{E}_p$ for every $E\in \mathcal{E}_p$. This means that $h{\restriction}V_p$ is isomorphism of the graph $\Gamma_p$.
	\end{proof}

The structure of the graph $\Gamma_p$ depends on properties of the prime number $p$.

A prime number $p$ is called
\begin{itemize}
\item {\em Fermat prime} if $p=2^n+1$ for some $n\in\IN$;
\item {\em Mersenne prime} if $p=2^n-1$ for some $n\in\IN$;
\item {\em Fermat--Mersenne} if $p$ is Fermat prime or Mersenne prime.
\end{itemize}
It is known (and easy to see) that for any Fermat prime number $p=2^n+1$ the exponent $n$ is a power of $2$, and for any Mersenne prime number $p=2^n-1$ the power $n$ is a prime number. It is not known whether there are infinitely many Fermat--Mersenne prime numbers. All known Fermat prime numbers are the numbers $2^{2^n}+1$ for $0\le n\le 4$ (see {\tt oeis.org/A019434} in \cite{OEIS}). At the moment only 51 Mersenne prime numbers are known, see the sequence {\tt oeis.org/A000043} in \cite{OEIS}.




\begin{lemma}\label{struct} Let $p$ be an odd prime number, $p\neq3$.
\begin{enumerate}
\item If $p=3$, then the set $\mathcal E_p$ of the edges of the graph $\Gamma_p$ coincides with the set of doubletons\\
$\{\varepsilon2^{a-1}3^b,\varepsilon2^{a-1}3^{b+1}\}$, $\{\varepsilon2^{a-1}3^b,2\varepsilon^{a-1}3^{b+2}\}$, $\{\varepsilon2^{a-1}3^b,\varepsilon2^{a}3^{b}\}$, $\{\varepsilon2^{a-1}3^b,\varepsilon2^{a+1}3^{b}\}$, $\{\varepsilon2^{a-1}3^{b+1},\varepsilon2^{a+1}3^b\}$,\\
$\{\varepsilon2^{a+1}3^{b},\varepsilon2^{a}3^{b+1}\}$, $\{\varepsilon2^{a+3}3^b,\varepsilon2^a3^{b+2}\},$ $  \{\varepsilon2^{a-1}3^b,-\varepsilon2^{a-1}3^{b+1}\}$, $\{\varepsilon2^{a-1}3^b,-\varepsilon2^{a}3^{b}\}$\\ $\{\varepsilon2^{a-1}3^b,-\varepsilon2^{a+2}3^{b}\}$, $\{\varepsilon2^{a-1}3^b,-\varepsilon2^{a-1}3^b\}$
for some $a,b\in\IN,\ \varepsilon \in\{-1,1\}$.
\item If $p=2^m+1>3$ is Fermat prime, then\newline $\mathcal E_p=\big\{\{\varepsilon2^{a-1}p^b,\varepsilon2^{a-1}p^{b+1}\}, \{\varepsilon2^{a-1}p^b,\varepsilon2^{a}p^b\}, \{\varepsilon2^{a-1}p^b,-\varepsilon2^{a+m-1}p^{b}\},$

     $ \{\varepsilon2^{a-1}p^b,-\varepsilon2^{a-1}p^b\}, \{\varepsilon2^{m+a-1}p^{b},\varepsilon2^{a-1}p^{b+1}\}:a,b\in\IN, \ \varepsilon \in\{-1,1\}\big\}$.
\item
If $p=2^m-1>3$ is Mersenne prime, then
 $\mathcal E_p=\big\{\{\varepsilon2^ap^b,\varepsilon2^{a-1}p^b\},\{\varepsilon2^{a-1}p^b,\varepsilon2^{m+a-1}p^b\}, $\break$ \{\varepsilon2^{a-1}p^{b+1},\varepsilon2^{m+a-1}p^b\},$ $\{\varepsilon2^{a-1}p^b,-\varepsilon2^{a-1}p^b\}, \{\varepsilon2^{a-1}p^b,-\varepsilon2^{a-1}p^{b+1}\}
 :a,b\in\IN, \ \varepsilon \in\{-1,1\}\big\}$.
\item If $p$ is not Fermat--Mersenne, then $\mathcal E_p=\big\{\{\varepsilon2^{a-1}p^b,-\varepsilon2^{a-1}p^b\}, \{\varepsilon2^{a-1}p^b,\varepsilon2^{a}p^b\}:a,b\in\IN, \ \varepsilon \in\{-1,1\}\big\}$.
\end{enumerate}
\end{lemma}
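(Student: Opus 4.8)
The plan is to convert edge‑membership in $\Gamma_p$ into an elementary diophantine condition on $x-y$, and then to resolve the resulting equations by a finite case analysis whose only deep ingredient is Miha\u\i lescu's Theorem~\ref{Mihailescu}. I first reformulate $\mathcal{E}_p$. For distinct $x,y\in V_p$ we have $p\in\Pi_x\cap\Pi_y$ (every element of $V_p$ is divisible by $p$) and $2\in A_{\{x,y\}}$ (always), so Lemma~\ref{2ae} gives $A_{\{x,y\}}=\Pi_x\cup\Pi_y\cup\Pi_{x-y}$, and this equals $\{2,p\}$ precisely when $\Pi_{x-y}\subseteq\{2,p\}$, i.e. precisely when $|x-y|=2^ap^b$ for some $a,b\in\w$. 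Thus $\{x,y\}\in\mathcal{E}_p$ if and only if $x-y$ is, up to sign, a product of a power of $2$ and a power of $p$. This condition is unaffected by simultaneously negating $x$ and $y$, so $z\mapsto -z$ is an automorphism of $\Gamma_p$ and it suffices to find the edges $\{x,y\}$ with $x>0$ and then close the list under negation. Write such an edge as $x=2^{c_1}p^{d_1}$ and $y=\delta\,2^{c_2}p^{d_2}$ with $\delta\in\{-1,1\}$, $c_1,c_2\in\w$ and $d_1,d_2\in\IN$.

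Next I extract the common factor: with $c=\min\{c_1,c_2\}$ and $d=\min\{d_1,d_2\}$ one has $x-y=2^cp^d\cdot M$, where $M=2^{c_1-c}p^{d_1-d}-\delta\,2^{c_2-c}p^{d_2-d}$, and the edge condition becomes $\Pi_{|M|}\subseteq\{2,p\}$. In each of the two monomials of $M$ at least one exponent vanishes, so after splitting according to which of $c_1,c_2$ and which of $d_1,d_2$ realises the minimum, and according to the sign $\delta$, the requirement $\Pi_{|M|}\subseteq\{2,p\}$ reduces, using only that $p$ is odd and that $p\nmid 2^k$ for all $k$, to the demand that an expression of one of the forms $2^{u}-1$, $2^{u}+1$, $p^{v}-1$, $p^{v}+1$ (with $u,v\in\IN$ free parameters) be a pure power of the other prime. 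The smallest parameter value $u=1$ (resp. $v=1$) is always admissible and yields the two ``universal'' families $\{2^cp^d,2^{c+1}p^d\}$ and $\{2^cp^d,-2^cp^d\}$ that occur for every $p$; every larger admissible value amounts to a solution of one of the Catalan‑type equations $p^k=2^j+1$, $p^k=2^j-1$, $2^k=p^j+1$, $2^k=p^j-1$.

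Each of these four equations is an instance of the Catalan relation $X-Y=1$ with $X,Y$ perfect powers, one of $2$ and the other of $p$. When the exponent of $p$ equals $1$ they say precisely that $p$ is a Fermat prime ($p=2^m+1$) or a Mersenne prime ($p=2^m-1$); when the exponent of $p$ is $\ge 2$, Theorem~\ref{Mihailescu} forces $\{X,Y\}=\{2^3,3^2\}$, so $p=3$ and the only surviving relation is $3^2=2^3+1$. Substituting the solutions back into the case analysis produces the four cases of the lemma: for $p$ neither Fermat nor Mersenne only the two universal families remain (case (4)); for a Fermat prime $p=2^m+1>3$ one gains the three further families arising from $p=2^m+1$ (case (2)); for a Mersenne prime $p=2^m-1>3$, symmetrically, the three families arising from $p=2^m-1$ (case (3)); and for $p=3$, which is at once Fermat ($2^1+1$) and Mersenne ($2^2-1$) and additionally satisfies $3^2=2^3+1$, one obtains all the preceding families together with the three extra ones $\{2^{a-1}3^b,2^{a-1}3^{b+2}\}$, $\{2^{a+3}3^b,2^a3^{b+2}\}$ and $\{2^{a-1}3^b,-2^{a+2}3^b\}$ coming from $3^2=2^3+1$ — which is exactly the list of eleven families of case (1). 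Matching these (and their negatives) with the exponents written in the statement is then a routine substitution.

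The main obstacle is organisational rather than conceptual. The single genuinely deep input is Miha\u\i lescu's theorem, needed to exclude solutions of $p^k=2^j\pm1$ and $2^k=p^j\pm1$ in which the exponent of $p$ is at least $2$; everything else is elementary. The care lies in the bookkeeping of the second step: one must run systematically through all coincidence patterns among the exponents $c_1,c_2,d_1,d_2$ together with both choices of sign $\delta$, so that no edge family is lost and no spurious one is introduced — and it is precisely this enumeration that makes a complete proof lengthy, even though every individual case is short.
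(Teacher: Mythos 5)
Your proposal is correct and follows essentially the same route as the paper: Lemma~\ref{2ae} reduces edge-membership in $\Gamma_p$ to the condition $\Pi_{x-y}\subseteq\{2,p\}$, and after extracting the common powers of $2$ and $p$ the remaining sign/exponent case analysis is settled by Theorem~\ref{Mihailescu}, exactly as in the paper's proof; your use of the symmetry $z\mapsto -z$ and the uniform reduction to the four equations $p^k=2^j\pm1$, $2^k=p^j\pm1$ is just a tidier bookkeeping of the paper's subcases $a=c$, $b=d$, $a\lessgtr c$, $b\lessgtr d$. The one detail you leave implicit --- that Theorem~\ref{Mihailescu} applies only when both sides are proper powers, so the instances with an exponent equal to $1$ (the Fermat/Mersenne relations) or $0$ must be disposed of separately --- is routine and is treated the same way in the paper.
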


\begin{proof} 	Proof of Lemma~\ref{struct} in each of cases (1)--(4) will be similar. The edges of graph $\Gamma_p$ are  $2$-element subsets of set $V_p$ such that $A_E=\{2,p\}$. Since the vertices of graph $\Gamma_p$ are numbers of the form $\pm2^{n-1}p^m$, where $n,m\in\IN$, we can apply Lemma~\ref{2ae} and conclude that a doubleton $\{x,y\}\subset V_p$ belongs to $\mathcal E_p$ if and only if $\{2,p\}= \Pi_x\cup\Pi_y\cup\Pi_{x-y}$. In subsequent proofs, we will intensively use the Mih\u ailescu Theorem~\ref{Mihailescu} saying that $2^3,3^2$ is a unique pair of consecutive powers.
	
1. First we consider the case of $p=3$. It is easy to see that the doubletons $\{x,y\}$ written in the statement (1) have $\Pi_x\cup\Pi_y\cup\Pi_{x-y}\subseteq\{2,3\}$, which implies that $\{x,y\}\in\mathcal E_3$. It remains to show that every doubleton $\{x,y\}\in\mathcal E_3$ is of the form indicated in the statement (1). Write $\{x,y\}$ as $\{\varepsilon 2^{a-1}3^b,\delta2^{c-1}3^d\}$ for some $a,b,c,d\in\IN$, $\varepsilon,\delta \in\{-1,1\}$ such that $2^{a-1}3^b\le 2^{c-1}3^d$.

If $a=c$ and $b=d$, then $\varepsilon\ne\delta$ and $\{x,y\}=\{\varepsilon 2^{a-1}3^b,-\varepsilon 2^{a-1}3^b\}$.

If $a=c$, then $b\le d$ and the inclusion $\Pi_{x-y}\subseteq\{2,3\}$ implies that $\Pi_{3^{d-b}-\varepsilon/\delta}\subseteq\{2,3\}$ and hence $3^{d-b}-\varepsilon/\delta$ is a power of $2$. If $\varepsilon/\delta=1$ then by the Mih\u ailescu Theorem~\ref{Mihailescu}, $d-b\in\{1,2\}$, which means that $\{x,y\}$ is equal to $\{\varepsilon2^{a-1}3^b,\varepsilon2^{a-1}3^{b+1}\}$ or $\{\varepsilon2^{a-1}3^b,\varepsilon2^{a-1}3^{b+2}\}$. If $\varepsilon/\delta=-1$ then  by the Mih\u ailescu Theorem~\ref{Mihailescu}, $d-b\in\{0,1\}$, which means that $\{x,y\}$ is equal to  $\{\varepsilon2^{a-1}3^b,-\varepsilon2^{a-1}3^{b}\}$ or $\{\varepsilon2^{a-1}3^b,-\varepsilon2^{a-1}3^{b+1}\}$.

If $b=d$, then $a\le c$ and the inclusion  $\Pi_{x-y}\subseteq\{2,3\}$ implies that $\Pi_{2^{c-a}-\varepsilon/\delta}\subseteq\{2,3\}$ and hence $2^{c-a}-\varepsilon/\delta$ is either 2 or a power of $3$. If $\varepsilon/\delta=1$ then by the Mih\u ailescu Theorem~\ref{Mihailescu}, $c-a\in\{1,2\}$, which means that $\{x,y\}$ is equal to $\{\varepsilon2^{a-1}3^b,\varepsilon2^{a}3^{b}\}$ or $\{\varepsilon2^{a-1}3^b,\varepsilon2^{a+1}3^{b}\}$. If $\varepsilon/\delta=-1$ then by the Mih\u ailescu Theorem~\ref{Mihailescu}, $c-a\in\{0,1,3\}$, which means that $\{x,y\}$ is equal to  $\{\varepsilon2^{a-1}3^b,-\varepsilon2^{a-1}3^{b}\}$,  $\{\varepsilon2^{a-1}3^b,-\varepsilon2^{a}3^{b}\}$ or $\{\varepsilon2^{a-1}3^b,-\varepsilon2^{a+2}3^{b}\}$.


So, we assume that $a\ne c$ and $b\ne d$. In this case we should consider four subcases.

If $a<c$ and $b<d$, then $\Pi_{x-y}\subseteq\{2,3\}$ implies that each prime divisor of $2^{c-a}3^{d-b}-\varepsilon/\delta$ is equal to $2$ or $3$, which is not possible.

If  $a<c$ and $b>d$, then $\Pi_{x-y}\subseteq\{2,3\}$ and $2^{a-1}3^b\le 2^{c-1}3^d$ imply that $2^{c-a}-(\varepsilon/\delta)3^{b-d}=1$ which implies that $\varepsilon=\delta$. Hence $c-a=2$ and $b-d=1$ by the Mih\u ailescu Theorem~\ref{Mihailescu}.  In this case $\{x,y\}=\{\varepsilon 2^{a-1}3^{d+1},\varepsilon 2^{a+1}3^d\}$.

If $a>c$ and $b<d$, then  $\Pi_{x-y}\subseteq\{2,3\}$ and $2^{a-1}3^b\le 2^{c-1}3^d$ imply that $3^{d-b}-(\varepsilon/\delta)2^{a-c}=1$. This implies that  $\varepsilon/\delta=1$ and hence $\langle d-b,a-c\rangle\in\{\langle 1,1\rangle,\langle 2,3\rangle\}$ by the Mih\u ailescu Theorem~\ref{Mihailescu}.  In this case $\{x,y\}$ is equal to $\{2^{c+1}3^{b},2^{c}3^{b+1}\}$ or $\{2^{c+3}3^b,2^c3^{b+2}\}$.

The subcase $a>c$ and $b>d$ is forbidden by the inequality $2^{a-1}3^b\le 2^{c-1}3^d$.
\smallskip

\begin{figure}
$$\xymatrix@C50pt{
\vdots\ar@{-}@/_14pt/[dddddddd]&\vdots\ar@{-}@/_14pt/[dddddddd]&\vdots\ar@{-}[dddrr]\ar@{-}@/_14pt/[dddddddd]&\vdots\ar@{-}@/_14pt/[dddddddd]\\
2^3{\cdot}3\ar@{-}@/_14pt/[dddddddd]\ar@{-}[dr]\ar@{-}@/_32pt/[ddddddd]\ar@{-}[dddddddr]\ar@{-}[d]\ar@{-}@/^15pt/[rr]\ar@{-}[r]&2^3{\cdot}3^2\ar@{-}@/_14pt/[dddddddd]\ar@{-}[dr]\ar@{-}@/_32pt/[ddddddd]\ar@{-}[dddddddr]\ar@{-}[r]\ar@{-}[d]\ar@{-}@/^15pt/[rr]&2^3{\cdot}3^3\ar@{-}@/_14pt/[dddddddd]\ar@{-}[dr]\ar@{-}[dddrr]\ar@{-}@/_32pt/[ddddddd]\ar@{-}[dddddddr]\ar@{-}[r]\ar@{-}[d]\ar@{-}@/^15pt/[rr]&2^3{\cdot}3^4\ar@{-}@/_14pt/[dddddddd]\ar@{-}[dr]\ar@{-}@/_32pt/[ddddddd]\ar@{-}[r]\ar@{-}[d]&\dots\\
2^2{\cdot}3\ar@{-}@/_12pt/[dddddd]\ar@{-}[dr]\ar@{-}[ddr]\ar@{-}@/_21pt/[ddddd]\ar@{-}[dddddr]\ar@{-}@/_15pt/[uu]\ar@{-}[d]\ar@{-}@/^15pt/[rr]\ar@{-}[r]&2^2{\cdot}3^2\ar@{-}@/_12pt/[dddddd]\ar@{-}[dr]\ar@{-}@/_21pt/[ddddd]\ar@{-}[dddddr]\ar@{-}[uul]\ar@{-}@/_15pt/[uu]\ar@{-}[r]\ar@{-}[d]\ar@{-}@/^15pt/[rr]&2^2{\cdot}3^3\ar@{-}@/_12pt/[dddddd]\ar@{-}[dr]\ar@{-}@/_21pt/[ddddd]\ar@{-}[dddddr]\ar@{-}[uul]\ar@{-}@/_15pt/[uu]\ar@{-}[r]\ar@{-}[d]\ar@{-}@/^15pt/[rr]&2^2{\cdot}3^4\ar@{-}@/_12pt/[dddddd]\ar@{-}[dr]\ar@{-}@/_21pt/[ddddd]\ar@{-}[uul]\ar@{-}@/_15pt/[uu]\ar@{-}[r]\ar@{-}[d]&\dots\\
2{\cdot}3\ar@{-}@/_10pt/[dddd]\ar@{-}[dr]\ar@{-}@/_18pt/[ddd]\ar@{-}@/^22pt/[dddddd]\ar@{-}[dddr]\ar@{-}@/_15pt/[uu]\ar@{-}[d]\ar@{-}@/^15pt/[rr]\ar@{-}[r]&2{\cdot}3^2\ar@{-}@/_10pt/[dddd]\ar@{-}[dr]\ar@{-}@/_18pt/[ddd]\ar@{-}@/^22pt/[dddddd]\ar@{-}[dddr]\ar@{-}[uul]\ar@{-}@/_15pt/[uu]\ar@{-}[r]\ar@{-}[d]\ar@{-}@/^15pt/[rr]&2{\cdot}3^3\ar@{-}@/_10pt/[dddd]\ar@{-}[dr]\ar@{-}@/_18pt/[ddd]\ar@{-}@/^22pt/[dddddd]\ar@{-}[dddr]\ar@{-}[uuull]\ar@{-}[uul]\ar@{-}@/_15pt/[uu]\ar@{-}[r]\ar@{-}[d]\ar@{-}@/^15pt/[rr]&2{\cdot}3^4\ar@{-}@/_10pt/[dddd]\ar@{-}[dr]\ar@{-}@/_18pt/[ddd]\ar@{-}@/^22pt/[dddddd]\ar@{-}[uuull]\ar@{-}[uul]\ar@{-}@/_15pt/[uu]\ar@{-}[r]\ar@{-}[d]&\dots\\
3\ar@{-}@/_8pt/[dd]\ar@{-}@/^20pt/[dddd]\ar@{-}[rd]\ar@{-}@/_15pt/[uu]\ar@{-}[d]\ar@{-}@/^15pt/[rr]\ar@{-}[r]&3^2\ar@{-}@/_8pt/[dd]\ar@{-}@/^20pt/[dddd]\ar@{-}[rd]\ar@{-}@/_15pt/[uu]\ar@{-}[r]\ar@{-}[d]\ar@{-}@/^15pt/[rr]&3^3\ar@{-}@/_8pt/[dd]\ar@{-}@/^20pt/[dddd]\ar@{-}[rd]\ar@{-}[uul]\ar@{-}[uuull]\ar@{-}@/_15pt/[uu]\ar@{-}[r]\ar@{-}[d]\ar@{-}@/^15pt/[rr]&3^4\ar@{-}@/_8pt/[dd]\ar@{-}@/^20pt/[dddd]\ar@{-}[uul]\ar@{-}[uuull]\ar@{-}@/_15pt/[uu]\ar@{-}[r]\ar@{-}[d]&\dots\ar@{-}[dl]\\
-3\ar@{-}@/_8pt/[uu]\ar@{-}@/_20pt/[uuuu]\ar@{-}[ur]\ar@{-}[d]\ar@{-}@/^15pt/[dd]\ar@{-}@/_15pt/[rr]\ar@{-}[r]&-3^2\ar@{-}@/_8pt/[uu]\ar@{-}@/_20pt/[uuuu]\ar@{-}[ur]\ar@{-}[r]\ar@{-}[d]\ar@{-}@/_15pt/[rr]\ar@{-}[ddl]\ar@{-}@/^15pt/[dd]&-3^3\ar@{-}@/_8pt/[uu]\ar@{-}@/_20pt/[uuuu]\ar@{-}[ur]\ar@{-}[ddl]\ar@{-}[dddll]\ar@{-}@/^15pt/[dd]\ar@{-}[r]\ar@{-}[d]\ar@{-}@/_15pt/[rr]&-3^4\ar@{-}@/_8pt/[uu]\ar@{-}@/_20pt/[uuuu]\ar@{-}[ddl]\ar@{-}@/^15pt/[dd]\ar@{-}[r]\ar@{-}[d]\ar@{-}[dddll]&\dots\ar@{-}[dddll]\ar@{-}[ul]\\
-2{\cdot}3\ar@{-}@/_10pt/[uuuu]\ar@{-}[ur]\ar@{-}@/_22pt/[uuuuuu]\ar@{-}[uuur]\ar@{-}@/^15pt/[dd]\ar@{-}[d]\ar@{-}@/_15pt/[rr]\ar@{-}[r]&-2{\cdot}3^2\ar@{-}@/_10pt/[uuuu]\ar@{-}[ur]\ar@{-}@/_22pt/[uuuuuu]\ar@{-}[uuur]\ar@{-}[ddl]\ar@{-}@/^15pt/[dd]\ar@{-}[r]\ar@{-}[d]\ar@{-}@/_15pt/[rr]&-2{\cdot}3^3\ar@{-}@/_10pt/[uuuu]\ar@{-}[ur]\ar@{-}@/_22pt/[uuuuuu]\ar@{-}[uuur]\ar@{-}[dddll]\ar@{-}[ddl]\ar@{-}@/^15pt/[dd]\ar@{-}[r]\ar@{-}[d]\ar@{-}@/_15pt/[rr]&-2{\cdot}3^4\ar@{-}@/_10pt/[uuuu]\ar@{-}[ur]\ar@{-}@/_22pt/[uuuuuu]\ar@{-}[dddll]\ar@{-}[ddl]\ar@{-}@/^15pt/[dd]\ar@{-}[r]\ar@{-}[d]&\dots\ar@{-}[dddll]\\
-2^2{\cdot}3\ar@{-}@/_12pt/[uuuuuu]\ar@{-}[ur]\ar@{-}[uuuuur]\ar@{-}@/^15pt/[dd]\ar@{-}[d]\ar@{-}@/_15pt/[rr]\ar@{-}[r]&-2^2{\cdot}3^2\ar@{-}@/_12pt/[uuuuuu]\ar@{-}[ur]\ar@{-}[uuuuur]\ar@{-}[ddl]\ar@{-}@/^15pt/[dd]\ar@{-}[r]\ar@{-}[d]\ar@{-}@/_15pt/[rr]&-2^2{\cdot}3^3\ar@{-}@/_12pt/[uuuuuu]\ar@{-}[ur]\ar@{-}[uuuuur]\ar@{-}[ddl]\ar@{-}@/^15pt/[dd]\ar@{-}[r]\ar@{-}[d]\ar@{-}@/_15pt/[rr]&-2^2{\cdot}3^4\ar@{-}@/_12pt/[uuuuuu]\ar@{-}[ur]\ar@{-}[ddl]\ar@{-}@/^15pt/[dd]\ar@{-}[r]\ar@{-}[d]&\dots\\
-2^3{\cdot}3\ar@{-}@/_15pt/[uuuuuuuu]\ar@{-}[ur]\ar@{-}[uuuuuuur]\ar@{-}[d]\ar@{-}@/_15pt/[rr]\ar@{-}[r]&-2^3{\cdot}3^2\ar@{-}@/_15pt/[uuuuuuuu]\ar@{-}[ur]\ar@{-}[uuuuuuur]\ar@{-}[r]\ar@{-}[d]\ar@{-}@/_15pt/[rr]&-2^3\ar@{-}@/_15pt/[uuuuuuuu]\ar@{-}[ur]\ar@{-}[uuuuuuur]{\cdot}3^3\ar@{-}[r]\ar@{-}[d]\ar@{-}@/_15pt/[rr]&-2^3\ar@{-}@/_15pt/[uuuuuuuu]\ar@{-}[ur]{\cdot}3^4\ar@{-}[r]\ar@{-}[d]&\dots\\
\vdots&\vdots&\vdots&\vdots\\
}
$$
\caption{The graph $\Gamma_3$}\label{fig3}
\end{figure}
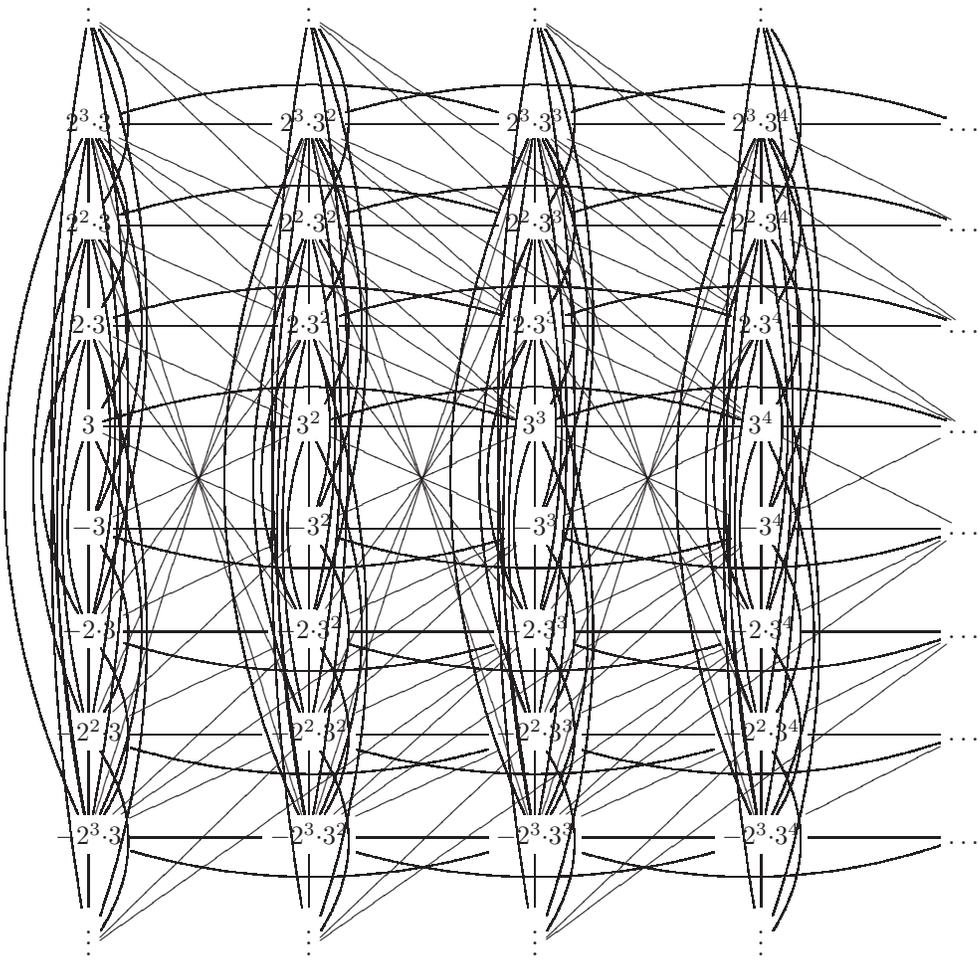

2. Assume that  $p=2^m+1>3$ is a Fermat prime. In this case $m>1$. Since $p>3$, $p$ is not Mersenne prime. It is easy to check that every doubleton $\{x,y\}\in\big\{\{\varepsilon2^{a-1}p^b,\varepsilon2^{a-1}p^{b+1}\}, \{\varepsilon2^{a-1}p^b,\varepsilon2^{a}p^b\},  \{\varepsilon2^{a-1}p^b,-\varepsilon2^{a+m-1}p^{b}\},$\break$ \{\varepsilon2^{a-1}p^b,-\varepsilon2^{a-1}p^b\}$,
$\{\varepsilon2^{m+a-1}p^{b},\varepsilon2^{a-1}p^{b+1}\}:a,b\in\IN, \ \varepsilon \in\{-1,1\}\big\}$ has $A_{\{x,y\}}= \Pi_x\cup\Pi_y\cup\Pi_{x-y}=\{2,p\}$ and hence $\{x,y\}\in\mathcal E_p$.

Now assume that $\{x,y\}\in\mathcal E_p$ is an edge of the graph $\Gamma_p$. Then $\Pi_x\cup\Pi_y\cup\Pi_{x-y}=A_{\{x,y\}}=\{2,p\}$ and $\{x,y\}$ can be written as $\{\varepsilon2^{a-1}p^b,\delta2^{c-1}p^d\}$ for some $a,b,c,d\in\IN$, $\varepsilon,\delta \in\{-1,1\}$ with $2^{a-1}p^b\le 2^{c-1}p^d$.

If $a=c$, $b=d$ and $\varepsilon=-\delta$ then $\Pi_{\varepsilon2^{a-1}p^b-\delta2^{a-1}p^{b}}=\Pi_{\varepsilon2^{a}p^b}\subset\{2,p\}$. In this case $\{x,y\}=\{\varepsilon2^{a-1}p^b,-\varepsilon2^{a-1}p^b\}$.

If $a=c$, then $b\le d$ and the inclusion $\Pi_{x-y}\subseteq\{2,p\}$ implies that $\Pi_{p^{d-b}-\varepsilon/\delta}\subseteq\{2,p\}$ and hence $p^{d-b}-\varepsilon/\delta$ is a power of $2$. By the Mih\u ailescu Theorem~\ref{Mihailescu}, $d-b\in\{0,1\}$.  If $d-b=0$, then $\varepsilon=-\delta$ and $\{x,y\}=\{\varepsilon 2^{a-1}p^b,-\varepsilon 2^{a-1}p^b\}$ by the preceding case. So, we assume that $d-b=1$. Since $p$ is not Mersenne prime, we conclude that $\varepsilon=\delta$, and hence  $\{x,y\}$ is equal to $\{ \varepsilon2^{a-1}p^b,\varepsilon2^{a-1}p^{b+1}\}$.

If $b=d$, then $a\le c$ and the inclusion  $\Pi_{x-y}\subseteq\{2,p\}$ implies that $\Pi_{2^{c-a}-\varepsilon/\delta}\subseteq\{2,p\}$ and hence $2^{c-a}-\varepsilon/\delta$ is a power of $p$. By the Mih\u ailescu Theorem~\ref{Mihailescu}, $2^{c-a}-\varepsilon/\delta\in\{1,p\}=\{1,2^m+1\}$. If $\varepsilon=\delta$ then $c-a=1$, which means that $\{x,y\}$ is equal to $\{\varepsilon2^{a-1}p^b,\varepsilon2^{a}p^{b}\}$. If $\varepsilon=-\delta$ then $c-a=m$ and $\{x,y\}=\{\varepsilon2^{a-1}p^b,-\varepsilon2^{a+m-1}p^{b}\}$.

So, we assume that $a\ne c$ and $b\ne d$. In this case we should consider four subcases.

If $a<c$ and $b<d$, then $\Pi_{x-y}\subseteq\{2,p\}$ implies that each prime divisor of $2^{c-a}p^{d-b}-\varepsilon/\delta$ is equal to $2$ or $p$, which is not possible.

If $a<c$ and $b>d$, then $\Pi_{x-y}\subseteq\{2,p\}$ implies that $2^{c-a}-(\varepsilon/\delta) p^{b-d}=1$ and hence $\varepsilon=\delta$. In this case the Mih\u ailescu Theorem~\ref{Mihailescu} ensures that $b-d=1$ and hence $2^{c-a}=p+1=2^m+2$ which is not possible (as $m>1$).

If $a>c$ and $b<d$, then $\Pi_{x-y}\subseteq\{2,p\}$ implies that $p^{d-b}-(\varepsilon/\delta)2^{a-c}=1$, which implies that $\varepsilon=\delta$. The Mih\u ailescu Theorem~\ref{Mihailescu} implies that $d-b=1$ and hence $2^{a-c}=p-1=2^m$ and $a-c=m$. In this case $\{x,y\}=\{\varepsilon2^{c+m-1}2^b,\varepsilon2^{c-1}p^{b+1}\}$.

The subcase $a>c$ and $b>d$ is forbidden by the inequality $2^{a-1}p^b\le 2^{c-1}p^d$.

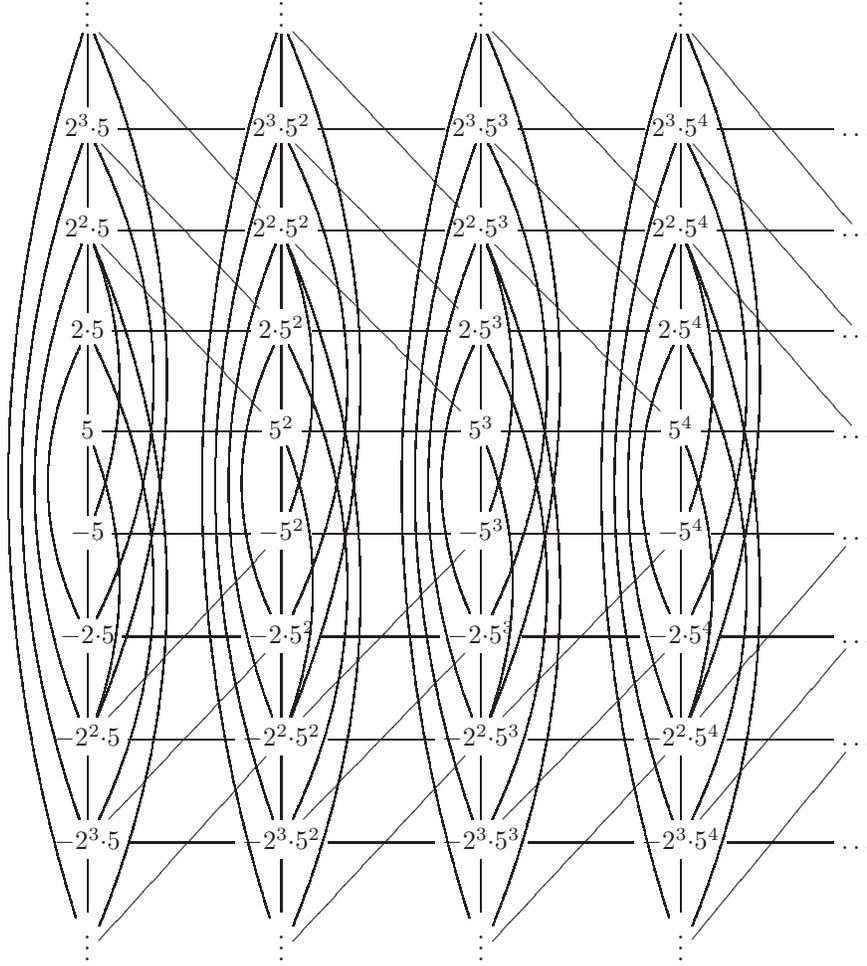
\begin{figure}
$$
\xymatrix@C40pt{
\vdots\ar@{-}@/^30pt/[ddddddd]\ar@{-}@/_30pt/[ddddddddd]\ar@{-}[d]\ar@{-}[rdd]&\vdots\ar@{-}@/_30pt/[ddddddddd]\ar@{-}@/^30pt/[ddddddd]\ar@{-}[d]\ar@{-}[rdd]&\ar@{-}@/_30pt/[ddddddddd]\ar@{-}@/^30pt/[ddddddd]\vdots\ar@{-}[d]\ar@{-}[rdd]&\vdots\ar@{-}@/_30pt/[ddddddddd]\ar@{-}@/^30pt/[ddddddd]\ar@{-}[d]\ar@{-}[rdd]&\\
2^3{\cdot}5\ar@{-}[r]\ar@{-}[d]\ar@{-}[rdd]\ar@{-}@/_25pt/[ddddddd]\ar@{-}@/^25pt/[ddddd]&2^3{\cdot}5^2\ar@{-}[r]\ar@{-}[d]\ar@{-}[rdd]\ar@{-}@/^25pt/[ddddd]\ar@{-}@/_25pt/[ddddddd]&2^3{\cdot}5^3\ar@{-}[r]\ar@{-}[d]\ar@{-}@/^25pt/[ddddd]\ar@{-}[rdd]\ar@{-}@/_25pt/[ddddddd]&2^3{\cdot}5^4\ar@{-}[r]\ar@{-}[d]\ar@{-}@/^25pt/[ddddd]\ar@{-}[rdd]\ar@{-}@/_25pt/[ddddddd]&\dots\\
2^2{\cdot}5\ar@{-}@/^12pt/[ddd]\ar@{-}[r]\ar@{-}[d]\ar@{-}[rdd]\ar@{-}@/^30pt/[ddddddd]\ar@{-}@/_20pt/[ddddd]&2^2{\cdot}5^2\ar@{-}@/^12pt/[ddd]\ar@{-}[r]\ar@{-}[d]\ar@{-}[rdd]\ar@{-}@/_20pt/[ddddd]\ar@{-}@/^30pt/[ddddddd]&2^2{\cdot}5^3\ar@{-}@/^12pt/[ddd]\ar@{-}[r]\ar@{-}[d]\ar@{-}[rdd]\ar@{-}@/_20pt/[ddddd]\ar@{-}@/^30pt/[ddddddd]&2^2{\cdot}5^4\ar@{-}@/^12pt/[ddd]\ar@{-}[r]\ar@{-}[d]\ar@{-}[rdd]\ar@{-}@/^30pt/[ddddddd]\ar@{-}@/_20pt/[ddddd]&\dots\\
2{\cdot}5\ar@{-}[r]\ar@{-}[d]\ar@{-}@/^25pt/[ddddd]\ar@{-}@/_15pt/[ddd]&2{\cdot}5^2\ar@{-}[r]\ar@{-}[d]\ar@{-}@/^25pt/[ddddd]\ar@{-}@/_15pt/[ddd]&2{\cdot}5^3\ar@{-}[r]\ar@{-}[d]\ar@{-}@/^25pt/[ddddd]\ar@{-}@/_15pt/[ddd]&2{\cdot}5^4\ar@{-}[r]\ar@{-}[d]\ar@{-}@/^25pt/[ddddd]\ar@{-}@/_15pt/[ddd]&\dots\\
5\ar@{-}[r]\ar@{-}[d]\ar@{-}@/^12pt/[ddd]&5^2\ar@{-}[r]\ar@{-}[d]\ar@{-}@/^12pt/[ddd]&5^3\ar@{-}[r]\ar@{-}[d]\ar@{-}@/^12pt/[ddd]&5^4\ar@{-}[r]\ar@{-}[d]\ar@{-}@/^12pt/[ddd]&\dots\\
-5\ar@{-}[r]\ar@{-}[d]&-5^2\ar@{-}[r]\ar@{-}[d]\ar@{-}[ldd]&-5^3\ar@{-}[r]\ar@{-}[d]\ar@{-}[ldd]&-5^4\ar@{-}[r]\ar@{-}[d]\ar@{-}[ldd]&\dots\ar@{-}[ldd]\\
-2{\cdot}5\ar@{-}[r]\ar@{-}[d]&-2{\cdot}5^2\ar@{-}[r]\ar@{-}[d]\ar@{-}[ldd]&-2{\cdot}5^3\ar@{-}[r]\ar@{-}[d]\ar@{-}[ldd]&-2{\cdot}5^4\ar@{-}[r]\ar@{-}[d]\ar@{-}[ldd]&\dots\ar@{-}[ldd]\\
-2^2{\cdot}5\ar@{-}[r]\ar@{-}[d]&-2^2{\cdot}5^2\ar@{-}[r]\ar@{-}[d]\ar@{-}[ldd]&-2^2{\cdot}5^3\ar@{-}[r]\ar@{-}[d]\ar@{-}[ldd]&-2^2{\cdot}5^4\ar@{-}[r]\ar@{-}[d]\ar@{-}[ldd]&\dots\ar@{-}[ldd]\\
-2^3{\cdot}5\ar@{-}[r]\ar@{-}[d]&-2^3{\cdot}5^2\ar@{-}[r]\ar@{-}[d]&-2^3{\cdot}5^3\ar@{-}[r]\ar@{-}[d]&-2^3{\cdot}5^4\ar@{-}[r]\ar@{-}[d]&\dots\\
\vdots&\vdots&\vdots&\vdots\\
}
$$
\caption{The graph $\Gamma_5$}\label{fig5}
\end{figure}

3. Assume that  $p=2^m-1>3$ is a Mersenne prime. In this case $m>2$ and $p$ is not Fermat.	It is easy to check that every doubleton $\{x,y\}\in\big\{\{\varepsilon2^ap^b,\varepsilon2^{a-1}p^b\},\{\varepsilon2^{a-1}p^b,\varepsilon2^{m+a-1}p^b\}, \{\varepsilon2^{a-1}p^{b+1},\varepsilon2^{m+a-1}p^b\},$ \break$\{\varepsilon2^{a-1}p^b,-\varepsilon2^{a-1}p^b\}, \{\varepsilon2^{a-1}p^b,-\varepsilon2^{a-1}p^{b+1}\} :a,b\in\IN, \  \varepsilon \in\{-1,1\} \big\}$ has $A_{\{x,y\}}= \Pi_x\cup\Pi_y\cup\Pi_{x-y}=\{2,p\}$ and hence $\{x,y\}\in\mathcal E_p$.

Now assume that $\{x,y\}\in\mathcal E_p$ is an edge of the graph $\Gamma_p$. Then $\Pi_x\cup\Pi_y\cup\Pi_{x-y}=A_{\{x,y\}}=\{2,p\}$ and $\{x,y\}$ can be written as $\{\varepsilon2^{a-1}p^b,\delta2^{c-1}p^d\}$ for some $a,b,c,d\in\IN$, $\varepsilon,\delta \in\{-1,1\}$ with $2^{a-1}p^b\le 2^{c-1}p^d$.

If $a=c$, $b=d$, then $\varepsilon=-\delta$ and  $\{x,y\}=\{\varepsilon2^{a-1}p^b,-\varepsilon2^{a-1}p^b\}$.

If $a=c$, then $b\le d$ and the inclusion $\Pi_{x-y}\subseteq\{2,p\}$ implies that $\Pi_{p^{d-b}-\varepsilon/\delta}\subseteq\{2,p\}$ and hence $p^{d-b}-\varepsilon/\delta$ is a power of $2$. By the Mih\u ailescu Theorem~\ref{Mihailescu}, $d-b\in\{0,1\}$. If $d-b=0$, then $\{x,y\}=\{\varepsilon2^{a-1}p^b,-\varepsilon2^{a-1}p^b\}$ by the preceding case. So, we assume that $d-b=1$. If $\varepsilon=\delta$, then  $p^{d-b}-\varepsilon/\delta=p-1=2^m-2$ is a power of $2$, which is not true as $m>2$. Therefore $\varepsilon=-\delta$ and $\{x,y\}$ is equal to $\{\varepsilon2^{a-1}p^b,-\varepsilon2^{a-1}p^{b+1}\}$

If $b=d$, then $a\le c$ and the inclusion  $\Pi_{x-y}\subseteq\{2,p\}$ implies that $\Pi_{2^{c-a}-\varepsilon/\delta}\subseteq\{2,p\}$ and hence $2^{c-a}-\varepsilon/\delta$ is a power of $p$. By the Mih\u ailescu Theorem~\ref{Mihailescu}, $2^{c-a}-\varepsilon/\delta\in\{1,p\}=\{1,2^m-1\}$, which implies that $\varepsilon=\delta$ and $c-a\in\{1,m\}$. Therefore $\{x,y\}$ is equal to $\{\varepsilon2^{a-1}p^b,\varepsilon2^{a}p^{b}\}$ or $\{\varepsilon2^{a-1}p^b,\varepsilon2^{m+a-1}p^b\}$.

So, we assume that $a\ne c$ and $b\ne d$. By analogy with the case of Fermat primes, we can show that the subcases ($a<c$ and $b<d$) and ($a>c$ and $b>d$) are impossible. 	

If $a<c$ and $b>d$, then $\Pi_{x-y}\subseteq\{2,p\}$ implies that $2^{c-a}-(\varepsilon/\delta) p^{b-d}=1$, and hence $\varepsilon/\delta=1$. Then the Mih\u ailescu Theorem~\ref{Mihailescu} ensures that $b-d=1$ and hence $2^{c-a}=p+1=2^m$ and $c-a=m$. In this case $\{x,y\}=\{\varepsilon2^{a-1}p^{d+1},\varepsilon2^{a+m-1}p^{d}\}$.

If $a>c$ and $b<d$, then $\Pi_{x-y}\subseteq\{2,p\}$ implies that $p^{d-b}-(\varepsilon/\delta)2^{a-c}=1$ and hence $\varepsilon/\delta=1$. Then Mih\u ailescu Theorem~\ref{Mihailescu} implies that $d-b=1$ and hence $2^{a-c}=p-1=2^m-2$, which is not possible as $m>2$.

\begin{figure}
$$
\xymatrix@C60pt{
\vdots\ar@{-}@/_30pt/[ddddddddd]\ar@{-}[d]\ar@{-}[dddr]\ar@{-}@/^20pt/[ddd]&\ar@{-}@/_30pt/[ddddddddd]\vdots\ar@{-}[d]\ar@{-}@/^20pt/[ddd]\ar@{-}[dddr]&\ar@{-}@/_30pt/[ddddddddd]\vdots\ar@{-}[d]\ar@{-}@/^20pt/[ddd]\ar@{-}[dddr]&\ar@{-}@/_30pt/[ddddddddd]\vdots\ar@{-}[d]\ar@{-}@/^20pt/[ddd]\ar@{-}[dddr]\\
2^3{\cdot}7\ar@{-}[dddddddr]\ar@{-}@/^20pt/[ddd]\ar@{-}[d]\ar@{-}[dddr]\ar@{-}@/_25pt/[ddddddd]&2^3{\cdot}7^2\ar@{-}[dddddddr]\ar@{-}@/^20pt/[ddd]\ar@{-}[d]\ar@{-}[dddr]\ar@{-}@/_25pt/[ddddddd]
&2^3{\cdot}7^3\ar@{-}[dddddddr]\ar@{-}@/^20pt/[ddd]\ar@{-}[d]\ar@{-}[dddr]\ar@{-}@/_25pt/[ddddddd]&2^3{\cdot}7^4\ar@{-}[dddddddr]\ar@{-}@/^20pt/[ddd]\ar@{-}[d]\ar@{-}[dddr]\ar@{-}@/_25pt/[ddddddd]&\dots\ar@{-}\\
2^2{\cdot}7\ar@{-}[dddddr]\ar@{-}@/_20pt/[ddddd]\ar@{-}[d]&\ar@{-}[dddddr]2^2{\cdot}7^2\ar@{-}[dddddr]\ar@{-}@/_20pt/[ddddd]\ar@{-}[d]&2^2{\cdot}7^3\ar@{-}[dddddr]\ar@{-}@/_20pt/[ddddd]\ar@{-}[d]&2^2{\cdot}7^4\ar@{-}[dddddr]\ar@{-}@/_20pt/[ddddd]\ar@{-}[d]&\dots\ar@{-}\\
2{\cdot}7\ar@{-}[dddr]\ar@{-}@/_15pt/[ddd]\ar@{-}[d]&2{\cdot}7^2\ar@{-}[dddr]\ar@{-}@/_15pt/[ddd]\ar@{-}[d]&2{\cdot}7^3\ar@{-}[dddr]\ar@{-}@/_15pt/[ddd]\ar@{-}[d]&2{\cdot}7^4\ar@{-}[dddr]\ar@{-}@/_15pt/[ddd]\ar@{-}[d]&\dots\ar@{-}\\
7\ar@{-}[dr]\ar@{-}[d]&7^2\ar@{-}[dr]\ar@{-}[d]&7^3\ar@{-}[dr]\ar@{-}[d]&7^4\ar@{-}[dr]\ar@{-}[d]&\dots\ar@{-}\\
-7\ar@{-}[ur]\ar@{-}[d]\ar@{-}@/^20pt/[ddd]&-7^2\ar@{-}[ur]\ar@{-}[d]\ar@{-}@/^20pt/[ddd]\ar@{-}[dddl]&-\ar@{-}[ur]7^3\ar@{-}[d]\ar@{-}@/^20pt/[ddd]\ar@{-}[dddl]&-7^4\ar@{-}[ur]\ar@{-}[d]\ar@{-}@/^20pt/[ddd]\ar@{-}[dddl]&\dots\ar@{-}[dddl]\\
-2{\cdot}7\ar@{-}[uuur]\ar@{-}[d]\ar@{-}@/^20pt/[ddd]&-2{\cdot}7^2\ar@{-}[uuur]\ar@{-}[d]\ar@{-}@/^20pt/[ddd]\ar@{-}[dddl]&-2{\cdot}7^3\ar@{-}[uuur]\ar@{-}[d]\ar@{-}@/^20pt/[ddd]\ar@{-}[dddl]&-2{\cdot}7^4\ar@{-}[uuur]\ar@{-}[d]\ar@{-}@/^20pt/[ddd]\ar@{-}[dddl]&\dots\ar@{-}[dddl]\\
-2^2{\cdot}7\ar@{-}[uuuuur]\ar@{-}[d]&-2^2{\cdot}7^2\ar@{-}[uuuuur]\ar@{-}[d]&-2^2{\cdot}7^3\ar@{-}[uuuuur]\ar@{-}[d]&-2^2{\cdot}7^4\ar@{-}[uuuuur]\ar@{-}[d]&\dots\\
-2^3{\cdot}7\ar@{-}[uuuuuuur]\ar@{-}[d]&-2^3{\cdot}7^2\ar@{-}[uuuuuuur]\ar@{-}[d]&-2^3{\cdot}7^3\ar@{-}[uuuuuuur]\ar@{-}[d]&-2^3{\cdot}7^4\ar@{-}[uuuuuuur]\ar@{-}[d]&\dots\\
\vdots&\vdots&\vdots&\vdots&\\
}
$$
\caption{The graph $\Gamma_7$}\label{fig7}
\end{figure}
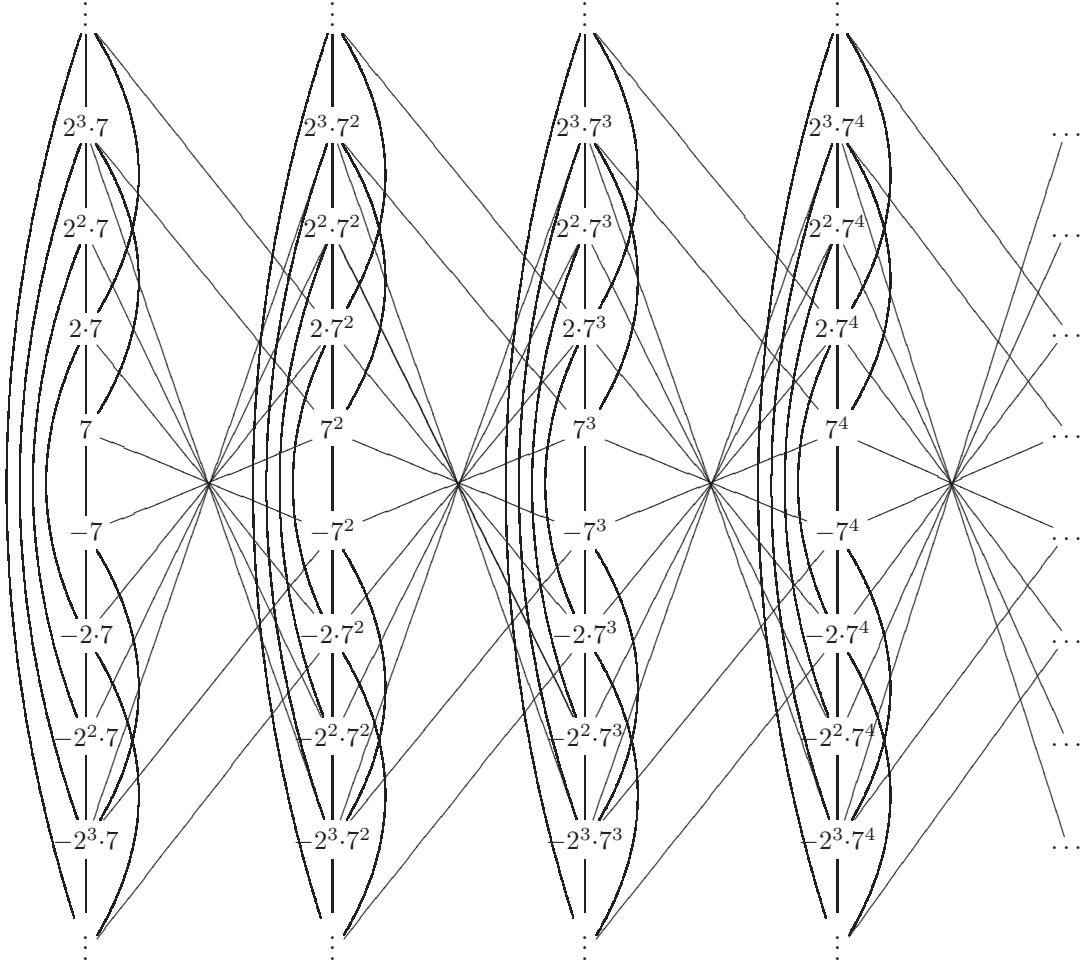

4. Assume that $p$ is not Fermat--Mersenne. It is easy to check that every doubleton
 $$\{x,y\}\in\big\{\{\varepsilon2^{a-1}p^b,-\varepsilon2^{a-1}p^b\}, \{\varepsilon2^{a-1}p^b,\varepsilon2^{a}p^b\}:a,b\in\IN, \ \varepsilon\in\{-1,1\}\big\}$$
 has $A_{\{x,y\}}=\Pi_x\cup\Pi_y\cup\Pi_{x-y}=\{2,p\}$ and hence $\{x,y\}\in\mathcal E_p$.

\begin{figure}
$$
\xymatrix@C40pt{
\ar@{-}[d]\ar@{-}@/^40pt/[ddddddddd]\vdots&\ar@{-}[d]\ar@{-}@/^40pt/[ddddddddd]\vdots&\ar@{-}[d]\ar@{-}@/^40pt/[ddddddddd]\vdots&\ar@{-}[d]\ar@{-}@/^40pt/[ddddddddd]\vdots\\
2^3{\cdot}11\ar@{-}[d]\ar@{-}@/^35pt/[ddddddd]&2^2{\cdot}11^3\ar@{-}[d]\ar@{-}@/^35pt/[ddddddd]&2^3{\cdot}11^3\ar@{-}[d]\ar@{-}@/^35pt/[ddddddd]&2^3{\cdot}11^4\ar@{-}[d]\ar@{-}@/^35pt/[ddddddd]&\cdots\\
2^2{\cdot}11\ar@{-}[d]\ar@{-}@/^30pt/[ddddd]&2^2{\cdot}11^2\ar@{-}[d]\ar@{-}@/^30pt/[ddddd]&2^2{\cdot}11^3\ar@{-}[d]\ar@{-}@/^30pt/[ddddd]&2^2{\cdot}11^4\ar@{-}[d]\ar@{-}@/^30pt/[ddddd]&\cdots\\
2{\cdot}11\ar@{-}[d]\ar@{-}@/^25pt/[ddd]&2{\cdot}11^2\ar@{-}[d]\ar@{-}@/^25pt/[ddd]&2{\cdot}11^3\ar@{-}[d]\ar@{-}@/^25pt/[ddd]&2{\cdot}11^4\ar@{-}[d]\ar@{-}@/^25pt/[ddd]&\cdots\\
11\ar@{-}[d]&11^2\ar@{-}[d]&11^3\ar@{-}[d]&11^4\ar@{-}[d]&\cdots\\
-11\ar@{-}[d]&-11^2\ar@{-}[d]&-11^3\ar@{-}[d]&-11^4\ar@{-}[d]&\cdots\\
-2{\cdot}11\ar@{-}[d]&-2{\cdot}11^2\ar@{-}[d]&-2{\cdot}11^3\ar@{-}[d]&-2{\cdot}11^4\ar@{-}[d]&\cdots\\
-2^2{\cdot}11\ar@{-}[d]&-2^2{\cdot}11^2\ar@{-}[d]&-2^2{\cdot}11^3\ar@{-}[d]&-2^2{\cdot}11^4\ar@{-}[d]&\cdots\\
-2^3{\cdot}11\ar@{-}[d]&-2^3{\cdot}11^2\ar@{-}[d]&-2^3{\cdot}11^3\ar@{-}[d]&-2^3{\cdot}11^4\ar@{-}[d]&\cdots\\
\vdots&\vdots&\vdots&\vdots&\\
}
$$
\caption{The graph $\Gamma_{11}$}\label{fig11}
\end{figure}
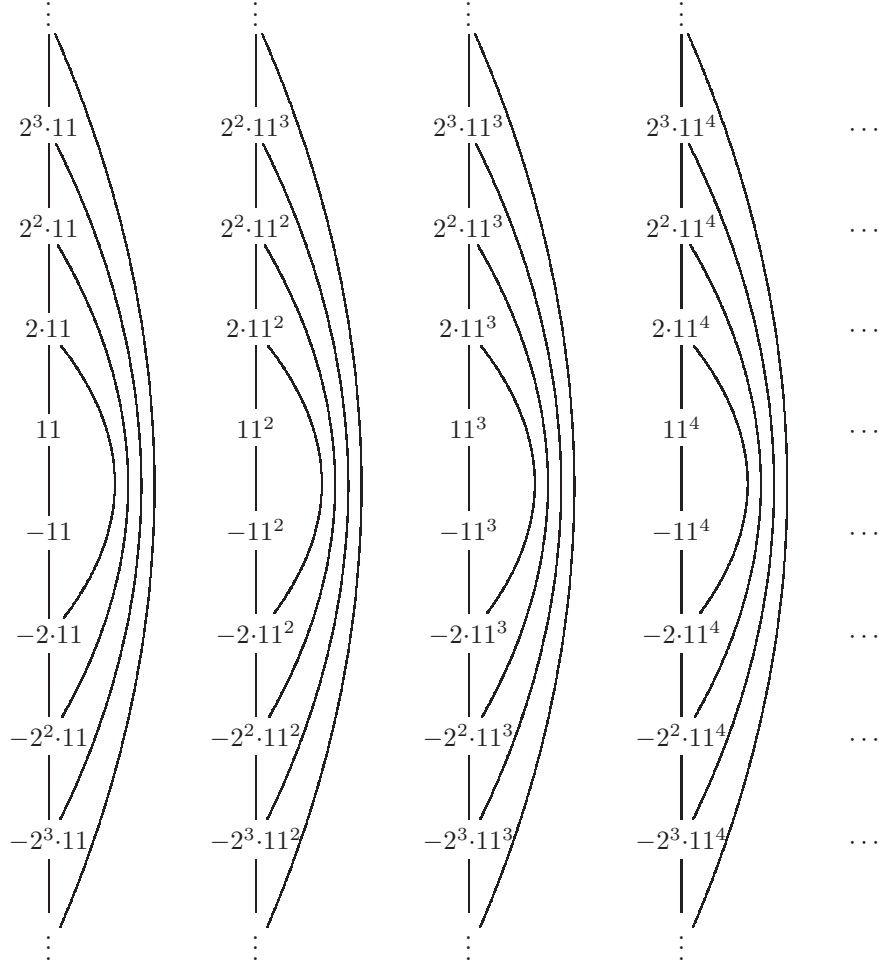

Now assume that $\{x,y\}\in\mathcal E_p$ is an edge of the graph $\Gamma_p$. Then $\Pi_x\cup\Pi_y\cup\Pi_{x-y}=A_{\{x,y\}}=\{2,p\}$ and $\{x,y\}$ can be written as $\{\varepsilon2^{a-1}p^b,\delta2^{c-1}p^d\}$ for some $a,b,c,d\in\IN$, $\varepsilon,\delta \in\{-1,1\}$ with $2^{a-1}p^b\le 2^{c-1}p^d$.

If $a=c$ and $b=d$, then $\varepsilon\ne\delta$ and  $\{x,y\}=\{2^{a-1}p^b,-2^{a-1}p^b\}$.

If $a=c$, then $b\le d$ and the inclusion $\Pi_{x-y}\subseteq\{2,p\}$ implies that $\Pi_{p^{d-b}-\varepsilon/\delta}\subseteq\{2,p\}$ and hence $p^{d-b}-\varepsilon/\delta$ is a power of $2$. By the Mih\u ailescu Theorem~\ref{Mihailescu}, $d-b=1$ and hence $p$ is a Fermat prime or Mersenne prime which is not true.

If $b=d$, then $a\le c$ and the inclusion  $\Pi_{x-y}\subseteq\{2,p\}$ implies that $\Pi_{2^{c-a}-\varepsilon/\delta}\subseteq\{2,p\}$ and hence $2^{c-a}-\varepsilon/\delta$ is a power of $p$. By the Mih\u ailescu Theorem~\ref{Mihailescu}, $2^{c-a}-\varepsilon/\delta\in\{1,p\}$. Taking into account that $p$ is not Fermat or Mersenne prime,  we conclude that if $\varepsilon=\delta$, $2^{c-a}-1=1$ and hence $c-a=1$. Then $\{x,y\}=\{\varepsilon2^{a-1}p^b,\varepsilon2^{a}p^{b}\}$.

So, we assume that $a\ne c$ and $b\ne d$. By analogy with the case of Fermate primes, we can show that the subcases ($a<c$ and $b<d$) and ($a>c$ and $b>d$) are impossible. 	

If $a<c$ and $b>d$, then $\Pi_{x-y}\subseteq\{2,p\}$ implies that $2^{c-a}-(\varepsilon/\delta) p^{b-d}=1$. By the Mih\u ailescu Theorem~\ref{Mihailescu}  $b-d=1$ and hence $p=2^{c-a}-1$ is a Mersenne prime, which is not true.

If $a>c$ and $b<d$, then $\Pi_{x-y}\subseteq\{2,p\}$ implies that $p^{d-b}-(\varepsilon/\delta)2^{a-c}=1$. By the Mih\u ailescu Theorem~\ref{Mihailescu}  $d-b=1$ and hence $p=1+2^{a-c}$ is a Fermat prime, which is not true.	
\end{proof}

In Figures~\ref{fig3}, \ref{fig5}, \ref{fig7}, \ref{fig11} we draw the graphs $\Gamma_p$ for $p$ equal to $3,5,7,11$. Observe that $3$ is both Fermat and Mersenne prime, $5$ is Fermat prime, $7$ is Mersenne prime and $11$ is not Fermat--Mersenne.

\begin{lemma}\label{l:graph}
 Let $p$ be an odd prime number and $h$ be a positive homeomorphism of the Kirch space.
\begin{enumerate}
\item If $p$ is Fermat-Mersenne, then $h(p)=p$;
\item If $p$ is not Fermat-Mersenne, then $h(p)=p^n$ for some $n\in \IN$.
\end{enumerate}
\end{lemma}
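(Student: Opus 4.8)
The plan is to feed the preceding lemma — that $h$ restricts to an automorphism of the graph $\Gamma_p$ (we use here that $h[V_p]=V_p$) — through the explicit edge–lists of Lemma~\ref{struct}, and then to remove the one remaining ambiguity (a sign) using the filters already pinned down by $\tilde h$ in Lemmas~\ref{l:p2p-fix} and~\ref{l:12fix}.

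\emph{Step 1 (graph reduction).} If $p$ is not Fermat–Mersenne, then by Lemma~\ref{struct}(4) the graph $\Gamma_p$ is the disjoint union, over $b\in\IN$, of the half–infinite ladders $L_b$ on the vertex set $\{\pm2^{a-1}p^b:a\in\IN\}$, with rails $\{2^{a-1}p^b\}_a$, $\{-2^{a-1}p^b\}_a$ and rungs $\{2^{a-1}p^b,-2^{a-1}p^b\}$; the only vertices of $L_b$ of degree $2$ are the corners $\pm p^b$. An automorphism permutes components and preserves degrees, so $h$ maps $L_1$ onto some $L_n$, hence $h(p)\in\{p^n,-p^n\}$ and accordingly $h(2p)=\varepsilon2p^n$, where $\varepsilon\in\{\pm1\}$ is the sign of $h(p)/p^n$. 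If $p$ is Fermat–Mersenne, I would instead determine $\mathrm{Aut}(\Gamma_p)$ from Lemma~\ref{struct}(1)–(3): a degree count identifies $p$ and $-p$ as the unique vertices of minimal degree (degree $4$ in cases (2),(3), and likewise in case~(1)), and propagating the neighbourhood relations out of them along the grid–like structure shows that the only automorphisms of $\Gamma_p$ are the identity and $x\mapsto-x$; hence $h(p)=\varepsilon p$ and $h(2p)=\varepsilon2p$, which is the case $n=1$ of the previous description.

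\emph{Step 2 (eliminating $\varepsilon$).} Since $h(2)=2$ by Lemma~\ref{l:2fix}, we have $\tilde h(\F_{\{2,2p\}})=\F_{\{h(2),h(2p)\}}=\F_{\{2,\,\varepsilon2p^n\}}$. Using Lemmas~\ref{l:wo2} and~\ref{l:max} one checks that the set of elements of $\mathfrak F'$ lying above $\F_{\{2,2p\}}$ is $\{\F_{\{2,q,2q\}}:q\in\{p\}\cup(\Pi_{p-1}\setminus\{2\})\}$, and that the corresponding set for $\F_{\{2,\varepsilon2p^n\}}$ is $\{\F_{\{2,q,2q\}}:q\in\{p\}\cup(\Pi_{p^n-\varepsilon}\setminus\{2\})\}$, where $p^n-\varepsilon$ denotes $p^n-1$ if $\varepsilon=1$ and $p^n+1$ if $\varepsilon=-1$. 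Each $\F_{\{2,q,2q\}}$ is fixed by $\tilde h$ (Lemma~\ref{l:12fix}), so $\tilde h$ fixes this whole set; hence the set of elements of $\mathfrak F'$ above $\F_{\{2,\varepsilon2p^n\}}$ coincides with it, and comparing the two descriptions (distinct odd primes $q$ give distinct filters $\F_{\{2,q,2q\}}$) yields $\Pi_{p-1}=\Pi_{p^n-\varepsilon}$. If $\varepsilon=-1$ this says $\Pi_{p-1}=\Pi_{p^n+1}$; but any odd prime $q\mid p-1$ satisfies $p\equiv1\pmod q$, whence $p^n+1\equiv2\pmod q$ and $q\nmid p^n+1$. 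So $p-1$ would have no odd prime divisor, i.e. $p=2^s+1$ would be a Fermat prime, and moreover $p^n+1$ would be a power of $2$: this is already impossible when $p$ is not Fermat–Mersenne, impossible for a Fermat prime $p>3$ (there $p\equiv1\pmod4$, so $p^n+1\equiv2\pmod4$), and for a Mersenne prime $p>3$ the number $p-1=2(2^{m-1}-1)$ is not a power of $2$ at all. Thus $\varepsilon=1$ whenever $p\ne3$, which gives $h(p)=p^n$ for $p$ not Fermat–Mersenne and $h(p)=p$ for Fermat–Mersenne $p>3$.

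\emph{Step 3 ($p=3$) and the main obstacle.} For $p=3$ the invariant $\Pi_{p\pm1}$ fails to separate $3$ from $-3$ (indeed $\Pi_2=\Pi_4=\{2\}$), so from Step~1 we only know $h(3)\in\{3,-3\}$. Here I would use that $\F_{\{2,3\}}=\F_{\{2,3,6\}}$: the sets $\{2,3\}$ and $\{2,3,6\}$ have the same $A$, the same $\Pi$ and the same $\alpha$, so Lemma~\ref{l:wo2} gives the equality, and $\F_{\{2,3,6\}}$ is fixed by $\tilde h$ by Lemma~\ref{l:12fix}. Hence $\F_{\{2,h(3)\}}=\tilde h(\F_{\{2,3\}})=\F_{\{2,3\}}$, while $A_{\{2,-3\}}=\{2,3,5\}\ne\{2,3\}=A_{\{2,3\}}$, so $h(3)\ne-3$ and therefore $h(3)=3$. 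The hard part of the whole argument is the purely combinatorial content of Step~1 for the Fermat–Mersenne primes — extracting from the long edge–lists of Lemma~\ref{struct}(1)–(3) that $\pm p$ are the unique minimal–degree vertices of $\Gamma_p$ and that $\Gamma_p$ admits no automorphism other than $x\mapsto\pm x$; the congruence bookkeeping in Steps~2–3 is routine.
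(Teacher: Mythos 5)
Your overall strategy is sound and runs parallel to the paper's: use the degree information in Lemma~\ref{struct} to pin $h(p)$ down to $\pm p$ (Fermat--Mersenne case) or $\pm p^n$ (otherwise), and then eliminate the sign by comparing $\Pi_{p-1}$ with $\Pi_{p^n\mp1}$. Your Steps 2 and 3 check out: the sets of elements of $\mathfrak F'$ above $\F_{\{2,2p\}}$ and above $\F_{\{2,\varepsilon2p^n\}}$ are indeed $\{\F_{\{2,q,2q\}}:q\in\{p\}\cup(\Pi_{p-1}\setminus\{2\})\}$ and $\{\F_{\{2,q,2q\}}:q\in\{p\}\cup(\Pi_{p^n-\varepsilon}\setminus\{2\})\}$ (all relevant $\alpha$-values equal $2$), Lemma~\ref{l:12fix} then forces $\Pi_{p-1}=\Pi_{p^n-\varepsilon}$, and your congruence analysis of $\varepsilon=-1$ and your treatment of $p=3$ via $\F_{\{2,3\}}=\F_{\{2,3,6\}}$ are correct. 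This is the same arithmetic the paper extracts more directly: it invokes the invariance of $A_E$ (Lemmas~\ref{lemmma}, \ref{2ae}) on the pairs $\{1,p\}$, $\{-1,p\}$, $\{2,3\}$, whose elements other than $p$ are already fixed by Lemma~\ref{l:2fix}, so no auxiliary value of $h$ is needed.

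The one genuine gap is in your Step 1 for Fermat--Mersenne primes $p>3$: your Step 2 requires $h(2p)=\varepsilon2p$, and you justify it only by the assertion that $\mathrm{Aut}(\Gamma_p)=\{\pm\mathrm{id}\}$, which you do not prove and yourself identify as the hard part. That full determination of the automorphism group is unnecessary (and harder than anything the lemma needs). Two repairs are available. First, a local degree count suffices: $h(2p)$ must be a neighbour of $h(p)=\varepsilon p$, and from the edge lists of Lemma~\ref{struct}(2,3) the neighbours of $\varepsilon p$ are $\varepsilon2p$, $-\varepsilon p$, $\pm\varepsilon2^{m}p$ and $\pm\varepsilon p^{2}$, of which only $\varepsilon2p$ has the same degree as $2p$. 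Second, and cleaner, you can avoid $h(2p)$ altogether by running your Step 2 with $\F_{\{1,p\}}$ in place of $\F_{\{2,2p\}}$: since $h(1)=1$ by Lemma~\ref{l:2fix}, and $A_{\{1,p\}}=\{p\}\cup\Pi_{p-1}$ with all relevant $\alpha$-values equal to $1$, the same argument (now with the filters $\F_{\{1,q,2q\}}$, fixed by Lemma~\ref{l:12fix}) gives $\Pi_{p-1}=\Pi_{p^n-\varepsilon}$ directly; this is in effect the paper's route. With either repair your proof is complete, and it differs from the paper's only in deriving the invariance of the divisor sets through the poset $\mathfrak F'$ rather than through Lemma~\ref{lemmma}.
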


\begin{proof}
 1. Lemma~\ref{struct}(1) implies that the degree of $\pm3$ in the graph $\Gamma_3$ is equal to $8$ but the other vertices have  degree at least $9$. Hence $h(3)=\pm3$. Assume that $h(3)=-3$.	Then  by Lemma \ref{l:2fix} and by Lemma \ref{lemmma} $\{2,3\}=A_{\{2,3\}}=A_{h(\{2,3\})}=A_{\{2,-3\}}=\{2,3,5\}$ but this is not true and hence $h(3)=3$.

Assume that  $p>3$ is Fermat or Mersenne prime. Lemma~\ref{struct}(2,3) implies that the degree of $\pm p$ in the graph $\Gamma_p$ is $4$ but the other vertices have  degree at least $5$. Hence $h(p)=\pm p$. Assume that $h(p)=- p$. By Lemma \ref{l:2fix}, $A_{\{1,p\}}=A_{\{1,h(p)\}}=A_{\{1,-p\}}$, so $\{p\}\cup \Pi_{p-1}=\{p\}\cup\Pi_{p+1}$, according to Lemma \ref{2ae}. This implies that $\Pi_{p-1}=\Pi_{p+1}=\{2\}$. Hence $p$ is both Fermate and Mersenne which  is possible iff $p=3$ and this contradicts our assumption. Therefore $h(p)=p$.

 2. Let $p$ be an odd prime number, which is not Fermat--Mersenne.
Lemma~\ref{struct}(4) implies that the set $\pm p^\IN=\{\varepsilon p^n:n\in\IN, \varepsilon\in\{-1,1\}\}$ coincides with the set of vertices of order 2 in the graph $\Gamma_p$. Taking into account that $h{\restriction}V_p$ is an isomorphism of the graph $\Gamma_p$, we conclude that $h( p)=\pm p^n$ for some $ n\in \IN$.  Assume that $h(p)=-p^n$. Then $h(\{-1,p\})=\{-1,-p^n\}$. By Lemma \ref{lemmma}, $A_{\{-1,p\}}=A_{\{-1,-p^n\}}$, so $\{p\}\cup \Pi_{p+1}=\{p\}\cup\Pi_{p^n-1}$, according to Lemma \ref{2ae}. Since $\{p\}$ does not intersect $\Pi_{p+1}$ and $\Pi_{p^n-1}$ we conclude that $ \Pi_{p+1}=\Pi_{p^n-1}$. Hence we get the inclusion $\Pi_{p-1}\subseteq\Pi_{p^n-1}=\Pi_{p+1}$. If some prime number $d$ divides $p-1$ then the inclusion $\Pi_{p-1}\subseteq\Pi_{p+1}$ implies that $d$ divides ${p+1}$, consequently  $d$ divides the difference $(p+1)-(p-1)=2$ and hence $d=2$. As a consequence, $\Pi_{p-1}=\{2\}$ and $p-1=2^m$ for some $m\in \IN$ which contradicts the assumption that $p$ is not Fermat prime. Hence $h( p)= p^n$.
\end{proof}


\begin{lemma}
\label{l:F1x} For any positive homeomorphism $h$ of the Kirch space and any prime number $p$ we have $h(p)=p$.
\end{lemma}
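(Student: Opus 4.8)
The plan is to combine Lemma~\ref{l:graph} with Mihăilescu's and Zsigmondy's theorems to rule out, for non-Fermat--Mersenne primes $p$, all the possibilities $h(p)=p^n$ with $n\ge 2$. By Lemma~\ref{l:graph}, if $p$ is Fermat--Mersenne we already have $h(p)=p$, so the whole issue is a prime $p$ that is neither Fermat nor Mersenne, for which Lemma~\ref{l:graph}(2) gives $h(p)=p^n$ for some $n\in\IN$. It remains to show $n=1$.

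First I would extract arithmetic constraints on $n$ from the homeomorphism-invariance of the sets $A_E$. Since $h(1)=1$ and $h(2)=2$ by Lemma~\ref{l:2fix}, applying $h$ to the doubletons $\{1,p\}$ and $\{2,p\}$ and using Lemma~\ref{lemmma} (or Lemma~\ref{Pix}) together with Lemma~\ref{2ae} yields
$$\{p\}\cup\Pi_{p-1}=A_{\{1,p\}}\setminus\{2\}\cup\dots = A_{\{1,p^n\}}\text{-data},\qquad \{p\}\cup\Pi_{p-1}\cup\{2\}=A_{\{2,p^n\}},$$
so in particular $\Pi_{p-1}\cup\{2\}=\Pi_{p^n-1}\cup\{2\}$; since $\Pi_{p-1}$ already contains $2$ (as $p$ is odd), this gives the crucial equality $\Pi_{p^n-1}=\Pi_{p-1}$. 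The point is that passing from $p$ to $p^n$ must introduce no new prime divisors of $p^n-1$ beyond those of $p-1$.

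Now I would invoke Zsigmondy's Theorem~\ref{Zsigmondy} applied to $a=p$ and the exponent $n$: the inclusion $\Pi_{p^n-1}\subseteq\Pi_{p-1}\subseteq\bigcup_{0<k<n}\Pi_{p^k-1}$ forces one of the two exceptional cases, namely $n=2$ with $p=2^k-1$ a Mersenne prime, or $n=6$ with $p=2$. The second case is impossible since $p$ is odd, and the first is impossible since we assumed $p$ is not Mersenne. Hence no $n\ge 2$ can occur, so $n=1$ and $h(p)=p$. (One should double-check the degenerate possibilities that Zsigmondy's statement requires $a,n\ge 2$: if $n=1$ we are done anyway, and $a=p\ge 3$ always.) Combining this with Lemma~\ref{l:graph}(1) for Fermat--Mersenne primes completes the proof that every positive homeomorphism fixes every prime.

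The main obstacle I anticipate is bookkeeping rather than conceptual: one must be careful that the equality of the $A$-sets really does distill down to $\Pi_{p^n-1}=\Pi_{p-1}$ (there are the stray factors of $2$ and the term $\Pi_x$ versus $\Pi_{x-1},\Pi_{x-2}$ in Lemma~\ref{2ae} to track, and one should confirm $p\notin\Pi_{p^n-1}$, which is clear), and that Zsigmondy's two exceptional families are genuinely excluded by the standing hypothesis on $p$. Everything else is a direct citation of the quoted lemmas.
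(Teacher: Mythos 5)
Your proposal is correct and follows essentially the same route as the paper: reduce to a non-Fermat--Mersenne prime via Lemma~\ref{l:graph}, use $h(1)=1$ (Lemma~\ref{l:2fix}) and the invariance of the $A$-sets together with Lemma~\ref{2ae} to get $\Pi_{p^n-1}=\Pi_{p-1}$, and then apply Zsigmondy's Theorem~\ref{Zsigmondy} to exclude $n\ge 2$. The extra doubleton $\{2,p\}$ you carry along is harmless but not needed; the paper works only with $\{1,p\}$.
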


\begin{proof}
If $p=2$, then $h(p)=p$ by Lemma~\ref{l:2fix}. If $p$ is Fermat--Mersenne, then $h(p)=p$ by Lemma~\ref{l:graph}. So, we assume $p$ is not Fermat--Mersenne. By Lemma~\ref{l:graph}, $h(p)=p^n$ for some $n\in\IN$.
By Lemmas~\ref{2ae}, \ref{l:2fix} and \ref{lemmma}, $$\{p\}\cup \Pi_{p-1}=A_{\{1,p\}}=A_{\{1,h(p)\}}=A_{\{1,p^n\}}=\{p\}\cup\Pi_{p^n-1}$$and hence $\Pi_{p^n-1}=\Pi_{p-1}$. Since $p$ is not Mersenne prime, Zsigmondy Theorem~\ref{Zsigmondy} guarantees that $n=1$ and hence $h(p)=p^1=p$.
\end{proof}
\begin{lemma} The positive homeomorphism group of the Kirch space is trivial.
\end{lemma}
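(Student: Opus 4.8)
The plan is to reduce the statement to a single congruence: I will show that for every $c\in\IZ^\bullet$ and every odd prime $p$ with $p\nmid c$ one has $h(c)\equiv c\pmod p$. Granting this, for a fixed $c$ the congruence holds for \emph{all but finitely many} primes $p$ (all odd primes outside the finite set $\Pi_c$), so the integer $h(c)-c$ is divisible by infinitely many primes and hence $h(c)=c$; thus $h$ is the identity. The congruences will be extracted from the action of the order isomorphism $\tilde h$ of $\mathfrak F$ (Proposition~\ref{p:iso}) on the set $\mathfrak F'$ of maximal elements of $\mathfrak F\setminus\{\F_\infty\}$, exploiting that $h$ fixes every prime (Lemma~\ref{l:F1x}).

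By the description of $\mathfrak F'$ proved after Lemma~\ref{l:max}, every member of $\mathfrak F'$ has the form $\F_{\{a,p,2p\}}$ with $p$ an odd prime and $a\in\{1,\dots,p-1\}$, and by Lemma~\ref{l:wo2} distinct pairs give distinct filters, so $\mathfrak F'$ is parametrised by pairs $(p,\bar a)$ with $\bar a\in(\IZ/p\IZ)\setminus\{0\}$. First I would record two elementary computations: (i) $A_{\{c,p,2p\}}=\{2,p\}$ for \emph{every} $c\in\IZ^\bullet$ and every odd prime $p$ (if $q\notin\{2,p\}$ is prime then $p$ and $2p$ have distinct non-zero residues modulo $q$, so they cannot both lie in a coset $\{0,k\}+q\IZ$), and (ii) $\Pi_{\{c,p,2p\}}=\emptyset$ whenever $p\nmid c$; by Lemma~\ref{l:max} these mean that for such $p,c$ the filter $\F_{\{c,p,2p\}}$ lies in $\mathfrak F'$ with parameter $(p,\overline c)$. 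Now fix such $p$ and $c$. Since $h(p)=p$, we have $\tilde h(\F_{\{c,p,2p\}})=\F_{\{h(c),p,h(2p)\}}$, and this lies in $\mathfrak F'$ because $\tilde h$, being an order automorphism fixing the top element $\F_\infty$, maps $\mathfrak F'$ onto itself. The key point is that the image still has first coordinate $p$: from $\F_{\{p,2p\}}\subseteq\F_{\{c,p,2p\}}$ (Lemma~\ref{l:wo2}) and $\tilde h(\F_{\{p,2p\}})=\F_{\{p,2p\}}$ (Lemma~\ref{l:p2p-fix}) we get $\F_{\{p,2p\}}\subseteq\F_{\{h(c),p,h(2p)\}}$, hence $A_{\{h(c),p,h(2p)\}}\subseteq A_{\{p,2p\}}=\{2,p\}$ by Lemma~\ref{l:wo2}, which with Lemma~\ref{l:max} forces $A_{\{h(c),p,h(2p)\}}=\{2,p\}$. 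Moreover $p\mid h(2p)$ by Lemma~\ref{Pix} applied to $2p$, and then membership of $\F_{\{h(c),p,h(2p)\}}$ in $\mathfrak F'$ forces $p\nmid h(c)$; reading off $\alpha_{\{h(c),p,h(2p)\}}(p)$ shows that the parameter of the image is $(p,\overline{h(c)})$. Hence $\tilde h$ induces, for each odd prime $p$, a well-defined self-map $\psi_p$ of $(\IZ/p\IZ)\setminus\{0\}$ with $\psi_p(\overline c)=\overline{h(c)}$ for every $c$ coprime to $p$ (well-definedness because $c\equiv c'\pmod p$ gives $\F_{\{c,p,2p\}}=\F_{\{c',p,2p\}}$, so the $\tilde h$-images coincide).

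It remains to prove $\psi_p=\mathrm{id}$ for every odd prime $p$. Given $\bar a\in(\IZ/p\IZ)\setminus\{0\}$, Dirichlet's Theorem~\ref{Dirichlet} supplies a prime $r\equiv a\pmod p$, necessarily $r\neq p$; since $h(r)=r$ we obtain $\psi_p(\bar a)=\psi_p(\bar r)=\overline{h(r)}=\bar r=\bar a$. Thus $\psi_p=\mathrm{id}$, i.e. $h(c)\equiv c\pmod p$ whenever $p\nmid c$, and the reduction of the first paragraph gives $h=\mathrm{id}$. I expect the real work to be concentrated in the second paragraph: the delicate part is not the passage to congruences but showing that $\tilde h$ does not merely permute $\mathfrak F'$ but \emph{respects its parametrisation} $(p,\overline c)$, and it is exactly this bookkeeping that makes the auxiliary subfilter $\F_{\{p,2p\}}$ (through Lemma~\ref{l:p2p-fix}) and the divisibility information of Lemma~\ref{Pix} indispensable; once the parametrisation is shown to be $\tilde h$-equivariant, Dirichlet's theorem closes the argument immediately.
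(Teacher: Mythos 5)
Your proof is correct, but it reaches the conclusion by a genuinely different route than the paper. The paper's own proof is a short topological argument: if $h(x)\neq x$ for some $x$, then the Hausdorff property of the Kirch space and continuity of $h$ yield a basic neighborhood $x+b\IZ\subseteq O_x$ with $h[O_x]\cap O_x=\emptyset$; by Dirichlet's Theorem~\ref{Dirichlet} this progression contains a prime $p$, so $h(p)\neq p$, contradicting Lemma~\ref{l:F1x}. In effect the paper uses that the primes are dense in every nonempty Kirch-open set, so a homeomorphism fixing all primes must be the identity. You instead stay entirely inside the order-theoretic machinery: using the parametrisation of $\mathfrak F'$ by pairs $(p,\bar a)$, the invariance of $\F_{\{p,2p\}}$ (Lemma~\ref{l:p2p-fix}), the divisibility information $p\mid h(2p)$ from Lemma~\ref{Pix}, and Lemma~\ref{l:wo2}, you show that $\tilde h$ sends the filter with parameter $(p,\bar c)$ to the one with parameter $(p,\overline{h(c)})$; then Dirichlet plus $h(r)=r$ for primes $r$ (Lemma~\ref{l:F1x}) forces $h(c)\equiv c\pmod p$ for every odd prime $p\nmid c$, and since $h(c)-c$ is divisible by infinitely many primes it vanishes. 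Both arguments hinge on Lemma~\ref{l:F1x} and Dirichlet; the paper's is shorter and exploits Hausdorffness and continuity directly (facts it takes for granted), while yours avoids any appeal to separation or continuity beyond Proposition~\ref{p:iso}, at the cost of re-running the $\mathfrak F'$ bookkeeping. Your equivariance argument for the parametrisation (via $\F_{\{p,2p\}}\subseteq\F_{\{c,p,2p\}}$, maximality, and $p\mid h(2p)$ to exclude $p\mid h(c)$) checks out, so the proposal stands as a valid alternative proof.
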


\begin{proof} To derive a contradiction, assume that the Kirch space admits a homeomorphism $h$ such that $h(x)\ne x$ for some number $x$. By the Hausdorff property of the Kirch space and the continuity of $h$, there exists a neighborhood $O_x$ of $x$ in the Kirch topology such that $h[O_x]\cap O_x=\emptyset$. By the definition of the Kirch topology, there exists a square-free number $b$ such that $\Pi_b\cap \Pi_x=\emptyset$ and $x+b\IZ\subseteq O_x$. By the  Dirichlet Theorem~\ref{Dirichlet}, the arithmetic progression $x+b\IN\subseteq x+b\IZ$ contains some prime number $p$. Then $h[O_x]\cap O_x=\emptyset$ implies $h(p)\ne p$, which contradicts Lemma \ref{l:F1x}.
\end{proof}
Our final lemma completes the proof of Theorem~\ref{t:main}.

\begin{lemma}Any homeomorphism $h$ of the Kirch space $ \IZ^\bullet$ is equal to $i:\IZ^\bullet\to\IZ^\bullet$, $i:x\mapsto x$ or to $j:\IZ^\bullet\to\IZ^\bullet$$, j:x\mapsto-x$.
\end{lemma}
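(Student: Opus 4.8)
The plan is to deduce this final statement from the preceding lemma, which says that every \emph{positive} homeomorphism of the Kirch space (that is, one with $h(1)>0$) is the identity. So let $h$ be an arbitrary homeomorphism of $(\IZ^\bullet,\tau_K)$. Since $h(1)\in\IZ^\bullet$ and $0\notin\IZ^\bullet$, exactly one of the cases $h(1)>0$ or $h(1)<0$ occurs, and I would treat them separately.

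In the case $h(1)>0$ there is nothing left to do: by the definition given above, $h$ is a positive homeomorphism, so the preceding lemma yields $h=i$. In the case $h(1)<0$ I would invoke the map $j\colon\IZ^\bullet\to\IZ^\bullet$, $j\colon x\mapsto-x$, which is a self-homeomorphism of the Kirch space (as noted in the introduction) and is moreover an involution, so $j\circ j=i$. Then $j\circ h$ is again a homeomorphism of the Kirch space, and $(j\circ h)(1)=-h(1)>0$, so $j\circ h$ is a positive homeomorphism. By the preceding lemma $j\circ h=i$, and composing on the left with $j$ gives $h=j\circ(j\circ h)=j\circ i=j$.

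Since the two cases are exhaustive, every homeomorphism $h$ of the Kirch space equals $i$ or $j$, which completes the proof of the lemma and therefore of Theorem~\ref{t:main}. I expect no real obstacle here: the substantive work has already been carried out in the earlier $22$ lemmas (in particular in the proof that the positive homeomorphism group is trivial), and the only remaining point is the elementary observation that pre-composing with the involution $j$ turns a homeomorphism with $h(1)<0$ into a positive one.
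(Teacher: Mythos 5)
Your proposal is correct and follows essentially the same route as the paper: split into the cases $h(1)>0$ and $h(1)<0$, apply the previous lemma (positive homeomorphisms are the identity) in the first case, and in the second case observe that $j\circ h$ is positive, hence the identity, so $h=j\circ(j\circ h)=j$. No gaps.
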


\begin{proof} If $h$ is positive, then $h=i$ by previous Lemma. If $h$ is not positive then $h(1)<0$ and $j\circ h(1)>0$. Then the homeomorphism $j\circ h$ is positive and equals $i$ by the preceding case. This implies that $$h=i\circ h =(j\circ j)\circ h=j\circ( j \circ h)=j\circ i=j.$$
\end{proof}

\noindent{\bf Acknowledgement.} The author expresses her sincerely thanks to  Taras Banakh for his generous help during preparation of this paper.

\end{document}